\documentclass[12pt]{amsart}


\usepackage[margin=1in]{geometry}  
\usepackage{graphicx, xcolor}              
\usepackage{amsmath}               
\usepackage{amsfonts}              
\usepackage{amsthm}  
 \usepackage{float} 
\usepackage{caption}
\usepackage{subcaption}
\usepackage{color}

\usepackage{mathrsfs}
\usepackage{amsbsy}
\usepackage{url}
\usepackage{cite}
\captionsetup[subfigure]{labelfont=rm}
\theoremstyle{plain}
\newtheorem{thm}{Theorem}[section]
\newtheorem{lem}[thm]{Lemma}
\newtheorem{prop}[thm]{Proposition}
\newtheorem{cor}{Corollary}[section]
\newtheorem{definition}{Definition}

\newtheorem{rem}{Remark}

\newtheorem{example}{Example}[section]
\newtheorem{conj}[thm]{Conjecture}	
\newtheorem*{remark}{Remarks}
\newcommand{\ackname}{Acknowledgements}

\DeclareMathOperator{\Inv}{Inv}
\DeclareMathOperator{\sgn}{sgn}
\DeclareMathOperator{\writhe}{wr}
\DeclareMathOperator{\Odd}{Odd}
\DeclareMathOperator{\signn}{sign}
\pdfoutput=1

\begin{document}

\nocite{*}
\raggedbottom

\title{New Invariants of Knotoids}

\author{Neslihan G{\"u}g{\"u}mc{\"u}}
\author{Louis H.Kauffman}

\address{N.G{\"u}g{\"u}mc{\"u}:Department of Mathematics, National Technical
University of Athens, Zografou Campus, GR-157 80 Athens, Greece}
\address{Louis H.Kauffman:Department of Mathematics, Statistics and Computer
Science, University of Illinois at Chicago, 851 South Morgan St., Chicago
IL 60607-7045, U.S.A.}
\email{nesli@central.ntua.gr} \email{kauffman@math.uic.edu}


\keywords{Knotoids, Virtual Knot, invariant, parity, flat diagrams, arrow polynomial, affine index polynomial, skein relation, bracket polynomial, height, lowerbound}
 
\subjclass[2010]{57M25, 57M27}

\begin{abstract}
In this paper we construct new invariants of knotoids including the odd writhe, the parity bracket polynomial, the affine index polynomial and the arrow polynomial, and give an introduction to the theory of virtual knotoids. The invariants in this paper are defined for both classical and virtual knotoids in analogy to corresponding invariants of virtual knots. We show that knotoids in $S^2$ have symmetric affine index polynomials. The affine index polynomial and the arrow polynomial provide bounds on the height (minimum crossing distance between endpoints) of a knotoid in $S^2$. 
\end{abstract}
\maketitle
\section{Introduction}
The theory of knotoids was introduced by V.Turaev \cite{Tu} in 2012. Knotoids in $S^2$ (classical knotoids) are open ended knot diagrams, forming a new diagrammatic theory that is an extension of the classical knot theory. A standard $1$-$1$ tangle (or long knot) has its endpoints in a single region of the diagram. A knotoid diagram generalizes the $1$-$1$ tangle and allows the endpoints to be in different regions of the diagram. This gives rise to a new theory and many new questions.

 Taking knotoids up to the equivalence described in the body of the paper, we can ask how far apart the endpoints need to be in all instances of diagrams for the equivalence class. We call the least such distance (in terms of crossing the boundaries of regions) the {\it height} of the knotoid. This sort of question about knotoids and their diagrams is a matter of combinatorial topology. Accordingly, we shall consider here a number of combinatorial topological invariants of knotoids, including the Jones polynomial.

 We find it natural to examine knotoids in the context of virtual knot theory. Virtual knots are knots in thickened surfaces (or knot diagrams in surfaces) taken up to handle stabilization. There is a diagrammatic theory for virtual knots, as we explain in the body of the paper. We extend knotoids to virtual knotoids, and also use methods from virtual knot theory to study knotoids in $S^2$. This connection between classical knotoids and virtual knot theory is fundamental, and will be the subject of work beyond the present paper. Knotoids are certainly a part of geometric three-dimensional knot theory and, as such, are related to open-ended embeddings of intervals in three dimensional space. We discuss this point of view as well in Section 2.1. Many classical invariants of knots and links can be defined/applied for knotoids in $S^2$, the knotoid group and the Jones polynomial \cite{Tu} are primary examples. We concentrate on a combinatorial topological approach in the present paper, and plan to consider more geometric approaches elsewhere. 

 In this paper we conjecture that the natural extension of the Jones polynomial to knotoids in $S^2$ detects the unknot (in the category of knotoids). It appears that this conjecture 
has the same level of difficulty as the conjecture about knot-detection for the Jones polynomial of classical knots. This indicates the importance of knotoids for understanding the Jones polynomial and its relationship with geometric topology. We now summarize the contents of the paper.

 The second section and the third section of this paper are recollections of classical knotoids, and virtual knot theory, respectively. In the second section we recall the basics of knotoids following V.Turaev's paper \cite{Tu}. Then we give a geometric interpretation of knotoids as open oriented curves embedded in $3$-dimensional space. Given an arbitrary oriented smooth open curve embedded in $\mathbb{R}^3$ we can project it to many planes, obtaining a multiplicity of knotoid diagrams in those planes. This collection of knotoids is a new measure (and definition) of the knottedness of an open-ended oriented smooth curve in space.

 The third section continues with the presentation of virtual knotoids. Then we introduce flat knotoid diagrams which will be a key ingredient for proving the theorems of the following sections. The third section ends with a discussion on the virtual closure map. The virtual closure determines a well-defined map from knotoids in $S^2$ to virtual knots, so is a key to define invariants for knotoids via virtual knot invariants. We note that the virtual closure map is a non-injective and a non-surjective map.

 The fourth section is a discussion about parity for knotoids and introduces the odd writhe which is a simple and useful invariant of knotoids. Using the parity of knotoids, we re-define the parity bracket polynomial \cite{Ma} for both classical and virtual knotoids. We show examples of nontrivial virtual knotoids whose non-triviality and also virtuality can be detected by the parity bracket polynomial. It is important to note that crossings of knotoids in $S^2$ can have parity. This is a remarkable appearance of parity in an essentially classical setting.
 
 In the fifth section we define the affine index polynomial for knotoids, and prove its symmetry for knotoids in $S^2$. We show how the symmetry of the affine index polynomial can be used to detect virtuality of a virtual knotoid. Also we discuss the image of the virtual closure map by using the symmetry of the affine index polynomial.

 In the sixth section we introduce the arrow polynomial of knotoids. We are already familiar with both the affine index and the arrow polynomial from virtual knot theory, and we show that they are nontrivial and useful invariants for knotoids as well. 
In both fifth and sixth sections, we show inequalities that relate these polynomials with the height (a minimal crossing distance between endpoints) of knotoids in $S^2$. These inequalities can often be used to determine the height of a knotoid.

 We end the paper with a discussion of problems and directions for classical and virtual knotoids.
\section{Knotoids}
 A \textit{knotoid diagram} in $S^2=\mathbb{R}^2\cup\infty$ or $\mathbb{R}^2$ is a generic immersion of the unit interval $[0,1]$ into $S^2$ or $\mathbb{R}^2$ with finitely many transversal double points endowed with over/under- crossing data. Such a double point is called a \textit {classical crossing} of the knotoid diagram. The points that are the images of $0$ and $1$ are distinct from each other, and from any of the crossings. These two points are the \textit{endpoints} of a knotoid diagram and they are called the \textit{tail} and the \textit{head}, respectively. A projection of an open curve is \textit{generic} if it is $1$-$1$ at all points including the endpoints, except finitely many points where it is 2-1 and transversal in the sense that the tangent directions at the double point are distinct. A knotoid diagram can be seen as a generic projection of an open oriented curve embedded in $S^2\times I$ to $S^2$. Knotoid diagrams are oriented from the tail to the head. The \textit{trivial knotoid diagram} is an embedding of the unit interval into $S^2$ (or in $\mathbb{R}^2$). It is depicted by an arc without any crossings as shown in Figure \ref{fig:knotoid}(a).  
\begin{figure}[H]
\centering  \scalebox{0.9}{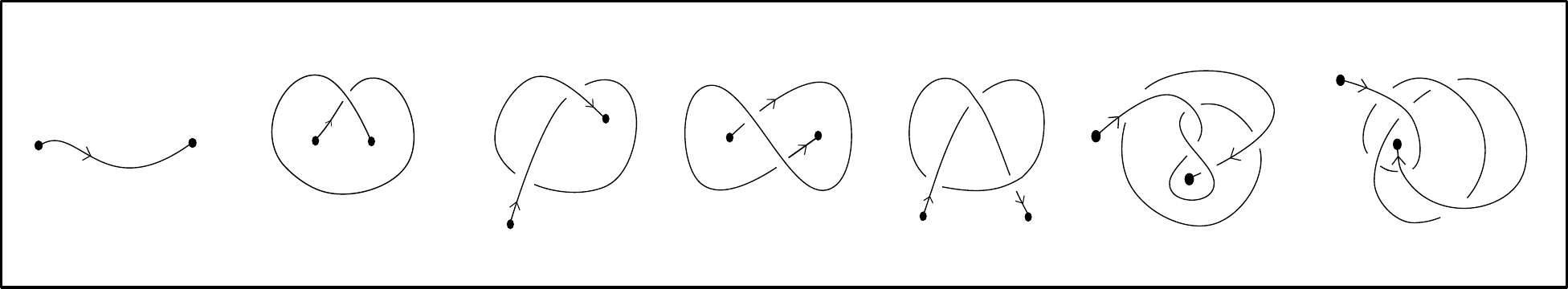}
\caption{\bf Knotoid diagrams}
\label{fig:knotoid}
\end{figure}
 The three Reidemeister moves, denoted by $\Omega_1$, $\Omega_2$, $\Omega_3$, see Figure 2(a), are defined on knotoid diagrams. We refer them as $\Omega$-moves. These moves modify a knotoid diagram within small disks surrounding the local diagrammatic regions as shown in Figure and they do not utilize the endpoints. It is forbidden to pull the strand adjacent to an endpoint over/under a transversal strand as shown in Figure 2(b). These moves are called \textit{forbidden knotoid moves}, and denoted by $\Phi_+$ and $\Phi_-$, respectively. Notice that if both $\Phi_+$ and $\Phi_-$- moves are allowed, any knotoid diagram in $S^2$ (or in $\mathbb{R}^2$) can be turned into the trivial knotoid diagram. 
\begin{figure}[H]
    \centering
		\hspace{10cm}

   \begin{subfigure}[c]{0.3\textwidth}
		\Large{
        \centering  \scalebox{0.5}{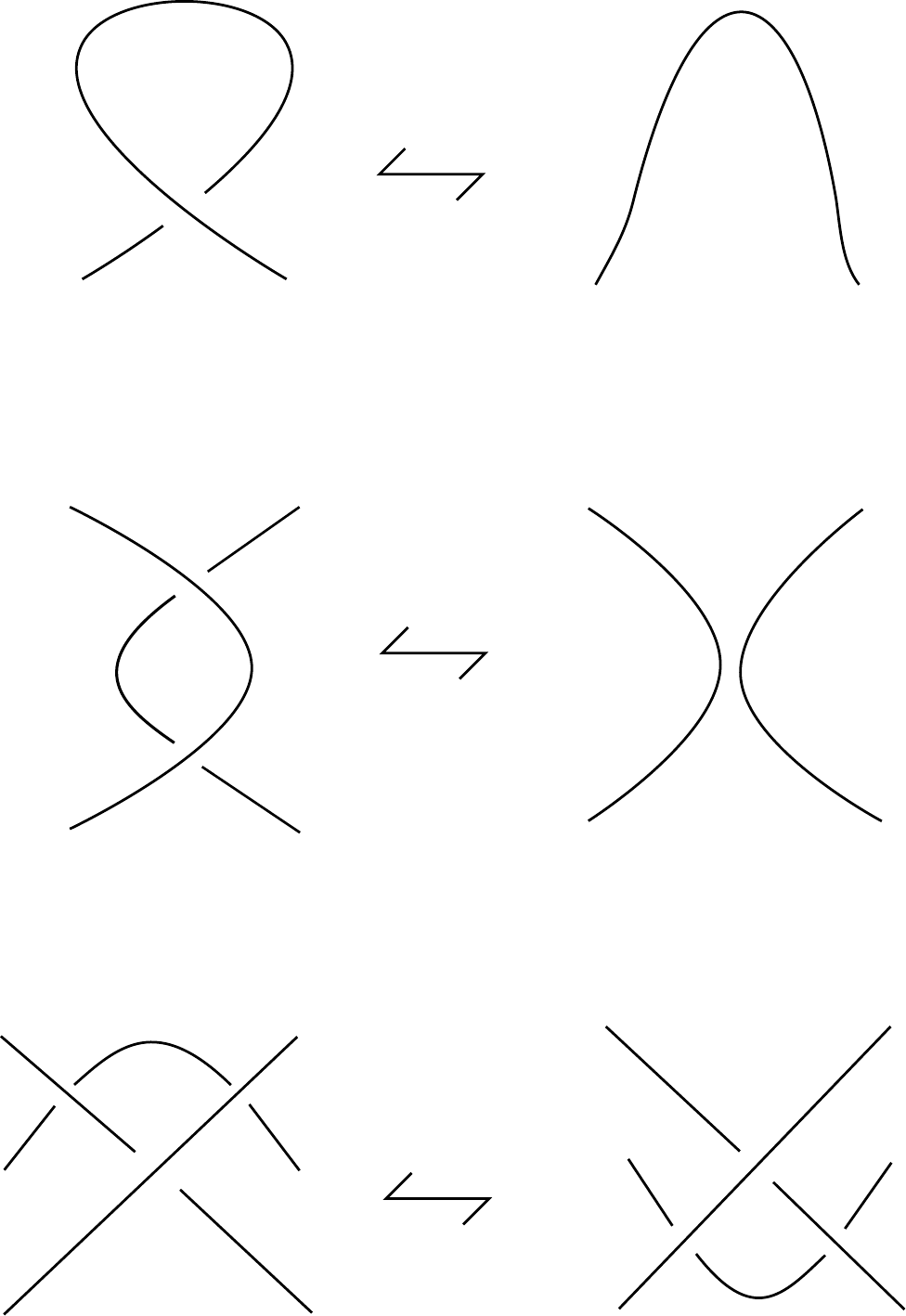}}
        \caption{\bf $\Omega_{i=1,2,3}$- moves}
        \label{subfig:om}
    \end{subfigure}
		\hspace{2cm} \begin{subfigure}[c]{0.5\textwidth}
		\Large{
        \centering  \scalebox{0.6}{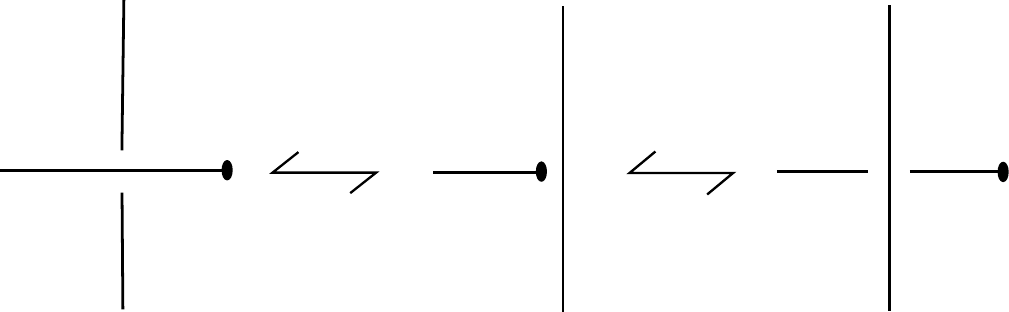}
				}
        \caption{\bf Forbidden knotoid moves}
        \label{subfig:for}
    \end{subfigure}
		\caption{The moves on knotoid diagrams in $S^2$}
    ~ 
\end{figure}
 The $\Omega$-moves plus isotopy of $S^2$ (isotopy of $\mathbb{R}^2$) generate an equivalence relation on knotoid diagrams in $S^2$ (on knotoid diagrams in $\mathbb{R}^2$, respectively). Two knotoid diagrams in $S^2$ are said to be \textit{equivalent} if they are related to each other by a finite sequence of $\Omega$-moves and isotopy of $S^2$ (isotopy of $\mathbb{R}^2$ for knotoid diagrams in $\mathbb{R}^2$). Knotoid diagrams in $S^2$ or in $\mathbb{R}^2$ with only classical crossings and their equivalence classes are called \textit{classical knotoid diagrams} and \textit{classical knotoids}, respectively. A knotoid diagram with \textit{virtual crossings}, see Figure \ref{fig:projection}, is called a \textit{virtual knotoid diagram}, see Section 3. In this paper, we consider primarily knotoids in $S^2$, and $S^2$ is always endowed with the orientation extending the counterclockwise orientation of $\mathbb{R}^2$. We emphasize where a classical knotoid is defined, by writing a \textit{knotoid in $S^2$} or a \textit{knotoid in $\mathbb{R}^2$}, when necessary, otherwise we shall write classical knotoid diagrams and classical knotoids to refer to 
both spherical and planar knotoid diagrams and knotoids, respectively.

 The sets comprising all  knotoids in $S^2$ and in $\mathbb{R}^2$ are denoted by $K(S^2)$ and $K$($\mathbb{R}^2$), respectively. There is an inclusion map between the two sets of classical knotoids, $\iota:\mathcal{K}(\mathbb{R}^2)\rightarrow\mathcal{K}(S^2)$, that is induced by the inclusion ${\mathbb{R}^2}\hookrightarrow {S^2}\cong\mathbb{R}^2\cup\infty$ \cite{Tu}. A knotoid diagram \textit{represents} a knotoid if it is in the equivalence class of the knotoid. Any knotoid in $S^2$ can be represented by a knotoid diagram in $\mathbb{R}^2$ by pushing a representative diagram in $S^2$ away from $\infty\in{S^2}$. Considering the equivalence class of this planar representation in ${\mathcal{K}}({\mathbb{R}^2})$, there is also a well-defined map $\rho:\mathcal{K}(S^2)\rightarrow\mathcal{K}({\mathbb{R}^2})$. It is clear that $\iota\circ\rho=id$ so that the inclusion map $\iota$ is surjective. However, there are examples of nontrivial knotoids in $\mathbb{R}^2$ which are trivial in $\mathcal{K}(S^2)$. For instance, the knotoid diagram given in Figure \ref{fig:knotoid}(b) represents a nontrivial planar knotoid \cite{Tu} whilst it represents the trivial knotoid in $S^2$. In fact, it can be turned into the trivial diagram by an isotopy of $S^2$ followed by an $\Omega_1$-move. This tells that the map $\iota$ is not an injective map.
   
\begin{definition} 
\normalfont
 Let $\mathcal{M}$ be a category of mathematical structures (e.g. polynomials, Laurent polynomials, the integers modulo five, commutative rings, groups, $\cdots$). An \textit{invariant} of classical knotoids is a mapping I: Classical Knotoids $\rightarrow$ $\mathcal{M}$ such that equivalent knotoids map to equivalent structures in $\mathcal{M}$.
\end{definition}
 Every knotoid diagram in $S^2$ is associated with at least one knot in $\mathbb{R}^3$ (a classical knot) \cite{Tu}. One way to obtain a classical knot diagram from a given knotoid diagram $K$, is to connect the endpoints of $K$ by an arc embedded in $S^2$ which is declared to go under each strand it meets during the connection. Such an embedded arc is called a \textit{shortcut} of $K$. The classical knot diagram obtained by connecting the endpoints of $K$ with a shortcut is called an \textit{underpass closure of} $K$. Any two shortcuts of $K$ are isotopic to each other, by a classical topological argument. It is clear that the isotopy between two shortcuts induces Reidemeister moves and planar isotopies between the resulting knot diagrams. Furthermore, any application of $\Omega$-moves on $K$ is realized as Reidemeister moves on the resulting knot diagram. Thus, assigning to a knotoid diagram $K$ its underpass closure induces a well-defined map from the set of knotoids in $S^2$ to the set of knots (isotopy classes) in $\mathbb{R}^3$. This map is called \textit{the underpass closure map} and is denoted by $\omega_-$. We say that a classical knot $\kappa$ is represented by a knotoid diagram $K$ if $\kappa$ is the underpass closure of $K$.

 Every classical knot can be represented by a knotoid diagram in $S^2$. Let $\kappa$ be a knot in $\mathbb{R}^3$. We take an oriented diagram $D$ of $\kappa$ in $S^2$. Cutting out an open arc from $D$ that contains no crossing or one or more overcrossings (that is, the arc is an underpassing arc) results in a classical knotoid diagram. By cutting out different underpassing arcs of the knot diagram $D$, we obtain a number of classical knotoid diagrams that go back to $D$ via the map $\omega_-$, in other words, they all represent the knot $\kappa$. Note that these knotoid diagrams may be nonequivalent to each other, depending on the arc we cut. For instance, the knotoid diagrams given in Figure 1(c) and Figure 1(e), represent the same trefoil knot. It can be shown that they are in fact nonequivalent diagrams by the knotoid invariants introduced in this paper, such as the odd writhe, the affine index polynomial and the arrow polynomial. Thus, the map $\omega_-$ is a surjective but not an injective map. The underpass closure map suggests to represent a classical knot via the knotoid diagrams. Obviously, the knotoid diagrams may have fewer crossings than the crossings of the knot they represent. For this reason, working with knotoid diagrams is advantageous for computation of many knot invariants which are related with the crossing number of the knot, such as the knot group and the Seifert genus \cite{Tu}. 

 One other way to obtain a classical knot diagram from a given knotoid diagram is to connect the endpoints of the knotoid diagram with an embedded arc in $S^2$ that goes over each strand it meets during the connection. The resulting oriented classical knot diagram is called the \textit{overpass closure} of the knotoid diagram. Assigning to a classical knotoid diagram its overpass closure induces a well-defined map from the set of knotoids in $S^2$ to the set of classical knots as well. This map is called the \textit{overpass closure} \textit{map}. The overpass closure and the underpass closure of a knotoid diagram may give rise to non-isotopic knots. For instance, the overpass closure of the knotoid diagram in Figure \ref{fig:knotoid}(c) is the trivial knot but the underpass closure of the knotoid is a trefoil knot. To have a well-defined representation of classical knots via knotoids, we fix the connection type as either the overpass or the underpass closure. We shall be working with the underpass closure map in this paper.

 The theory of knotoids in $S^2$ is an extension of the theory of classical knots. There is a well-defined injective map $\alpha$
\begin{center}
$\alpha$:~Classical Knot Diagrams in ${S^2}$/$\left\langle R1,R2,R3 \right\rangle$ $\rightarrow$ Knotoids in $S^2$,
\end{center} 
where $\left\langle R1,R2,R3 \right\rangle$ denotes the equivalence relation generated by the usual Reidemeister moves. Let $D$ be an oriented knot diagram in $S^2$, representing a classical knot $\kappa$. The map $\alpha$ is induced by cutting out an open arc of $D$ which is apart from the crossings of $D$, and assigning to $D$ the resulting knotoid diagram in $S^2$. Neither the choice of the knot diagram representing $\kappa$ nor the choice of the open arc to be cut out from the chosen diagram alters the resulting knotoid \cite{Tu}. Therefore the map $\alpha$ is well-defined. For the injectivity of $\alpha$, it is sufficient to see that both underpass and overpass closures of any knotoid that is in the image of the map $\alpha$, are equivalent knot diagrams \cite{Tu}.

 \begin{definition}\normalfont
The knotoids in $S^2$ that are in the image of the map $\alpha$ are called the \textit{knot-type knotoids}, and the knotoids that are not in the image are called the \textit{pure} or \textit{proper knotoids} (we shall prefer to say proper).
\end{definition}
Let $K$ be a classical knotoid diagram with $n$ crossings. By ignoring the over/under information at each crossing of the diagram $K$ and regarding crossings as vertices, we obtain a connected planar graph with $n+2$ vertices, $n$ of which correspond to the crossings and two of which correspond to the endpoints of $K$. This graph is called the\textit{ underlying graph} of the knotoid diagram $K$. The reader can easily check that the underlying graph divides $S^2$ (or $\mathbb{R}^2$) into $n+1$ local regions. We call these regions the \textit{regions of the knotoid diagram} $K$. Each knot-type knotoid has a knotoid diagram in its equivalence class whose endpoints are located in the same local region of the diagram. Such a knotoid diagram is called a \textit{knot-type knotoid diagram}. The endpoints of a proper knotoid can be in any but different local regions of any of its representative diagrams. Figures \ref{fig:knotoid}(a),(b),(e), when they are considered in $S^2$, illustrate some examples of knot-type knotoid diagrams and Figures \ref{fig:knotoid}(c),(d),(f),(g) illustrate some examples of proper knotoid diagrams. Note that a knot-type knotoid can be represented by a proper knotoid diagram, in other words, $\Omega$-moves and isotopy of $S^2$ can change the placement of the endpoints.

 The set of knotoids, $\mathcal{K}(S^2)$ can be regarded as the union of the set of knot-type knotoids and the set of proper knotoids. The set of classical knots is in $1-1$-correspondence with the set of knot-type knotoids via the map $\alpha$. Note that a knot-type knotoid can be thought as a $1-1$ tangle or a long knot. It is well-known that a classical long knot carries the same knotting information as the classical knot obtained by closing the two endpoints of the long knot \cite{Kaw,Va,CDM,Man}. From this it is immediate to conclude that a knot-type knotoid can be considered the same as the classical knot it represents. For proper knotoids this is no longer true. There are proper knotoids (so nontrivial) representing the trivial knot. The knotoid given by the diagram in Figure \ref{fig:knotoid}(d) is a nontrivial proper knotoid \cite{Tu} but it represents the trivial knot (via the underpass closure map). It is one of the fundamental problems in our paper to determine the type of a given knotoid.
\begin{definition}
\normalfont
A \textit{multi-knotoid diagram} is defined to be a knotoid diagram in $S^2$ with multiple circular components \cite{Tu}. We generalize the concept of multi-knotoids to include multiple open-ended components. The equivalence relation on knotoid diagrams in $S^2$ extends naturally to an equivalence relation on multi-knotoid diagrams and a \textit{multi-knotoid} is defined to be an equivalence class of multi-knotoid diagrams. 
\end{definition}
Many of the invariants discussed in this paper extend to multi-knotoids. We will remark on this as the paper proceeds.
\subsection{An Interpretation of Classical Knotoids in 3-Dimensional Space}
 Let $K$ be a knotoid diagram in $\mathbb{R}^2$. The plane of the diagram is identified with $\mathbb{R}^2\times\{0\}\subset\mathbb{R}^3$. $K$ can be embedded into $\mathbb{R}^3$ by pushing the overpasses of the diagram into the upper half-space and the underpasses into the lower half-space in the vertical direction. The tail and the head of the diagram are attached to the two lines, t$\times$$\mathbb{R}$ and h$\times$$\mathbb{R}$ that pass through the tail and the head, respectively and is perpendicular to the plane of the diagram. Moving the endpoints of $K$ along these special lines gives rise to embedded open oriented curves in $\mathbb{R}^3$ with two endpoints of each on these lines. \vspace{-0.43cm}
\begin{figure}[H]
    \centering
    \begin{subfigure}[c]{0.25\textwidth}
		\centering  \scalebox{0.15}{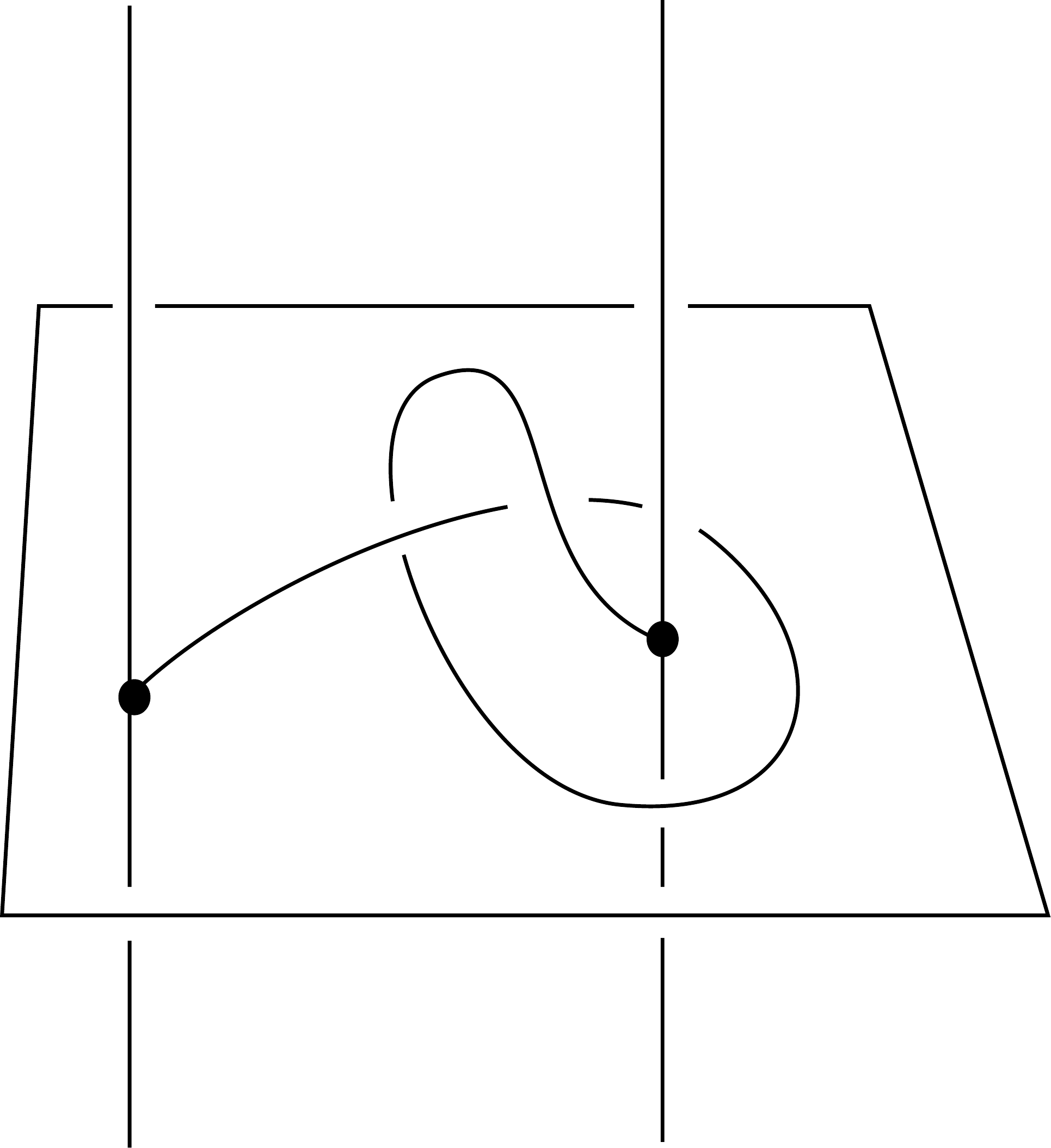}
      \caption{\bf}
       \label{subfig:}
    \end{subfigure}
		\begin{subfigure}[c]{0.25\textwidth}
			\centering  \scalebox{0.15}{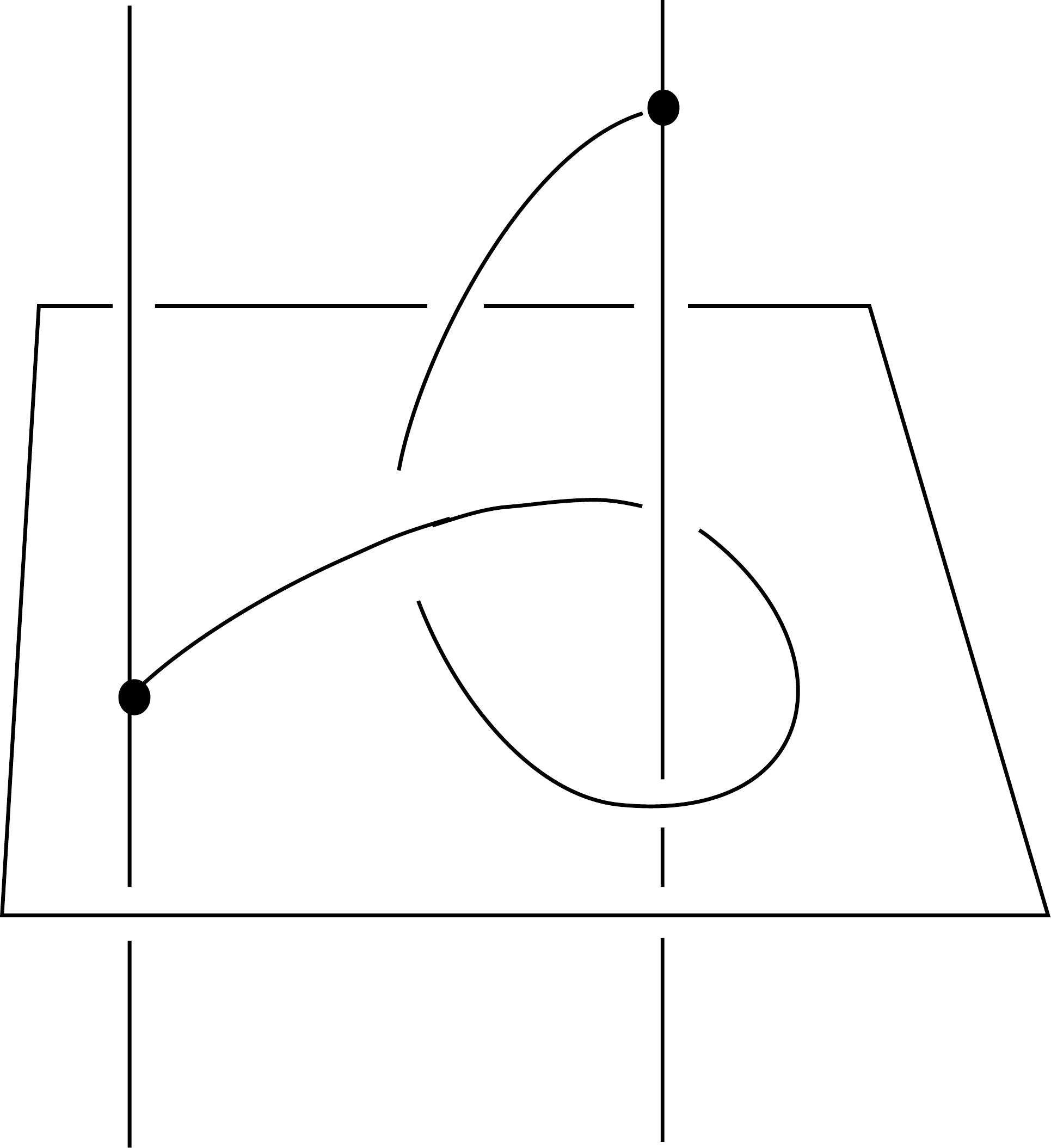}
        \caption{\bf }
        \label{subfig:b}
    \end{subfigure}
    ~ 
\begin{subfigure}[c]{0.25\textwidth}
			\centering  \scalebox{0.15}{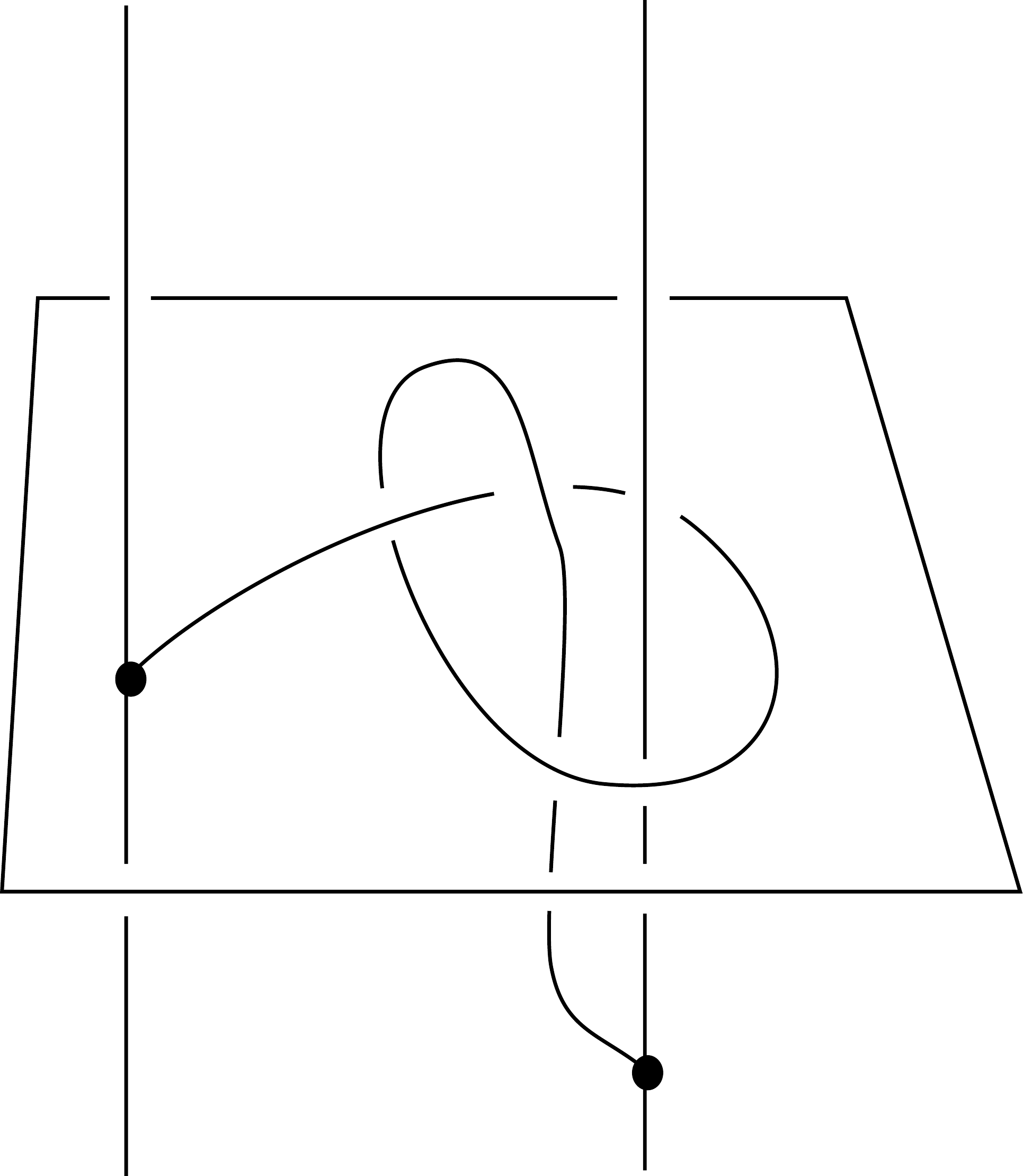}
        \caption{\bf }
        \label{subfig:c}
    \end{subfigure}
\caption{\bf Curves in $\mathbb{R}^3$ obtained by the knotoid diagram in Figure \ref{fig:knotoid}(c)}
\label{fig:curv}
\end{figure}
 Two smooth open oriented curves embedded in $\mathbb{R}^3$ with the endpoints that are attached to two special lines, are said to be \textit{line isotopic} if there is a smooth ambient isotopy of the pair (${\mathbb{R}^3} \setminus\{t\times{\mathbb{R}}, h\times{\mathbb{R}}$\}, $t\times{\mathbb{R}}\cup h\times{\mathbb{R}}$), taking one curve to the other curve in the complement of the lines, taking endpoints to endpoints, and taking lines to lines; $t\times{\mathbb{R}}$ to $t\times{\mathbb{R}}$ and $h\times{\mathbb{R}}$ to $h\times{\mathbb{R}}$. The curves given in Figure \ref{fig:curv} are line isotopic to each other.

 Conversely, let be given an open oriented embedded curve in $\mathbb{R}^3$ with a generic projection to the $xy$- plane. The endpoints of the curve determine two lines passing through the endpoints and perpendicular to the plane. The generic projection of the curve to the $xy$- plane along the lines which is endowed with over and under-crossing data, is a knotoid diagram in $\mathbb{R}^2$. We call a smooth open embedded curve in $\mathbb{R}^3$ that has a generic projection to the $xy$- plane a \textit{generic curve with respect to the $xy$-plane}. Such a curve has a line isotopy class as described in the previous paragraph.  
\begin{thm}
Two smooth open oriented curves in $\mathbb{R}^3$ that are generic with respect to the $xy$-plane are line isotopic with respect to the lines passing through the endpoints if and only if their generic projections to the $xy$-plane (along the lines) are equivalent knotoid diagrams, that is, they are related by $\Omega$- moves and isotopy of the plane.
\end{thm}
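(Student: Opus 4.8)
The plan is to prove both implications by transporting the classical dictionary between links in $\mathbb{R}^3$ and link diagrams modulo Reidemeister moves into the present setting. The one genuinely new point is that confining the two endpoints to the two vertical lines is exactly what forbids the moves $\Phi_\pm$, so that a generic isotopy of curves projects to a sequence consisting of $\Omega$-moves and planar isotopies only.

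\emph{Proof that equivalent diagrams give line-isotopic curves.} Suppose the two projections are equivalent knotoid diagrams $D_0,D_1$. First I would note that each given curve is line isotopic to the \emph{standard lift} of its projection: since the projection is a generic knotoid diagram, at each crossing the over-branch has strictly greater height than the under-branch, so one may isotope the height function keeping all $xy$-coordinates fixed (hence keeping the projection unchanged) until the curve lies at height $0$ except near each crossing, where the over-branch rises to $+1$ and the under-branch dips to $-1$. This isotopy moves points only vertically, so it preserves every vertical line setwise and keeps each endpoint on its line; and since a knotoid diagram has no strand passing through either endpoint, the curve stays off the two special lines throughout, so the isotopy is a line isotopy. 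Next I would realize one $\Omega$-move and one planar isotopy as line isotopies of standard lifts: an $\Omega$-move is supported in a disk of the plane disjoint from $t$ and $h$ (the $\Omega$-moves do not use the endpoints), so it lifts to an isotopy supported in a ball disjoint from both vertical lines; a planar isotopy $\phi_s$ lifts to the level-preserving ambient isotopy $(p,z)\mapsto(\phi_s(p),z)$, which carries vertical lines to vertical lines and respects standard lifts. Composing the finitely many moves that realize $D_0\simeq D_1$ with the two flattening isotopies produces the required line isotopy; orientations are respected throughout.

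\emph{Proof that line-isotopic curves have equivalent projections.} Let $H_s$ be an ambient isotopy of $\mathbb{R}^3$ realizing the line isotopy, carrying the first pair of vertical lines to the second through a continuously varying pair of vertical lines, with $C_s:=H_s(C_0)$ meeting neither of its two lines except at its endpoints. I would put the family $\{C_s\}$ in general position for vertical projection by the standard transversality argument behind Reidemeister's theorem: after a small perturbation rel the endpoints and rel the two moving lines, the projection of $C_s$ is a generic knotoid diagram for all but finitely many $s$, and at each exceptional parameter the diagram changes by exactly one of: a triple point, a strand tangency, a vertical tangency, \emph{or} a strand of the diagram sweeping across one of the two moving endpoint positions $p(s),q(s)$; between exceptional parameters the diagram only undergoes planar isotopy. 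The first three events are $\Omega_3,\Omega_2,\Omega_1$. The crux is that the fourth event is impossible: if a strand passed through $p(s)$ in the projection, that point of $C_s$ would share its $xy$-coordinates with the line through the current tail, i.e.\ $C_s$ would meet that vertical line at a non-endpoint, contradicting the line-isotopy hypothesis — and likewise at $q(s)$. Hence the only diagram events are $\Omega$-moves and planar isotopies, no forbidden move $\Phi_\pm$ occurs, and the projections of $C_0$ and $C_1$ are equivalent knotoid diagrams.

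The step I expect to be the main obstacle is making this general-position argument fully rigorous: one must verify that the codimension-one strata of ``bad'' isotopies are exactly the four types listed — now in the presence of two moving marked points and two moving marked lines rather than the usual closed-curve picture — and that the fourth type, while a genuine codimension-one phenomenon for an unconstrained planar isotopy, is identically excluded once the three-dimensional non-incidence condition with the lines is imposed. The remaining ingredients — the flattening isotopy, the locality of lifted $\Omega$-moves, the level-preserving lift of a planar isotopy, and orientation compatibility — are routine and can, if one prefers, be organized with the flat-diagram bookkeeping of Section 3.
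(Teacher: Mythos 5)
Your argument is correct in outline but follows a genuinely different route from the paper. The paper switches to the piecewise-linear category: it defines a triangle move in $\mathbb{R}^3$ (allowed only when the triangle is pierced neither by another edge of the curve nor by the two lines), expresses any line isotopy as a finite sequence of such moves, subdivides so that each triangle projects to a small nonsingular triangle in the plane, and then inducts on the strands crossing the projected triangle to show that each triangle move projects to a finite combination of $\Omega$-moves and planar isotopy; the requirement that triangles avoid the lines is where the forbidden moves $\Phi_\pm$ get excluded. You instead stay in the smooth category and run the transversality proof of Reidemeister's theorem on the one-parameter family of projections, classifying the codimension-one events ($\Omega_1,\Omega_2,\Omega_3$, plus the stratum where a strand sweeps across an endpoint) and observing that the last stratum is killed by the non-incidence of the curve with the two vertical lines --- the same geometric point, made analytically rather than combinatorially. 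Your version has the merit of spelling out both implications (the paper only argues the projection direction, leaving the lifting of $\Omega$-moves and planar isotopies implicit), but it leans on the unverified classification of codimension-one strata for projections of arcs with marked endpoints and marked lines, which you rightly flag; the paper's subdivision-and-induction on triangle moves is exactly the device that replaces that singularity-theoretic input with a combinatorial one. One small caution on your lifting step: the paper's definition of line isotopy keeps $t\times\mathbb{R}$ and $h\times\mathbb{R}$ fixed setwise at every time, whereas a planar isotopy realizing knotoid equivalence may move the endpoints, so the level-preserving lift $(p,z)\mapsto(\phi_s(p),z)$ displaces the lines at intermediate times; you should compose with a correcting horizontal ambient isotopy supported near the moving endpoints to restore the lines --- routine, but needed to match the definition as stated.
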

\begin{proof}
 Since everything is set in the smooth category, we can switch to the piecewise linear category. Open curves are defined as \textit{piecewise linear curves} in $\mathbb{R}^3$, that is, as the union of finitely many edges: $[p_1, p_2]$,...,$[p_{n-1}, p_n]$ such that each edge intersects one or two other edges at the points, $p_i$, $i=2$,...,$n-1$ and $p_1$ and $p_n$ are the endpoints of the curve. We define the \textit{triangle move} in $3$- dimensional space. Given an open curve with endpoints on the lines, let $[p_i, p_{i+1}]$ be an edge of the curve and $p_0$ be a point in the complement of the curve and the two lines. The edge is transformed to two edges $[p_i, p_0]$ and $[p_0, p_{i+1}]$ which form a triangle, whenever this triangle is not pierced by another edge of the curve or by the lines. In the reverse direction, a consecutive sequence of two edges may be transformed to one edge by a triangle move. An ambient isotopy of a piecewise linear curve in the complement of the two lines can be expressed by a finite sequence of triangle moves.

 By using triangle moves we can subdivide the edges into smaller edges as shown in Figure \ref{fig:sub}. Any triangle move can be factored into a sequence of smaller triangular moves by \textit{subdividing} the triangles and the edges accordingly. Consider the projection of a curve to the plane, triangular regions that triangular moves take place are projected to non-singular triangles and these triangles possibly contain many strands which are the projection of other edges. The entire ambient isotopy of the curve can be reduced to the shadow cases in the plane shown in Figure \ref{fig:tri}, by subdivision. Inducting on the strands inside the triangles shows that triangle moves are generated by $\Omega$- moves, shown in the left of the figure and the right side shows some cases that are finite combinations of $\Omega$- moves. 
\end{proof}
\begin{cor}
There is a one-to-one correspondence between the set of knotoids in $\mathbb{R}^2$ and the set of line-isotopy classes of smooth open oriented curves in $\mathbb{R}^3$ with two endpoints attached to lines that pass through the endpoints and perpendicular to the $xy$-plane.
\end{cor}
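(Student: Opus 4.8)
The plan is to package the preceding theorem together with the explicit three-dimensional construction of Section~2.1 into a bijection. First I would observe that every smooth open oriented curve in $\mathbb{R}^3$ with endpoints on two lines perpendicular to the $xy$-plane is line isotopic to one that is generic with respect to the $xy$-plane: a small general-position perturbation, supported in the complement of the two lines, makes the projection generic. Hence it suffices to set up the correspondence using generic curves. Define a map $\Psi$ from the set of line-isotopy classes of generic curves to the set $\mathcal{K}(\mathbb{R}^2)$ by sending the class of a curve $c$ to the knotoid represented by the generic projection of $c$ to the $xy$-plane along the two lines through its endpoints.

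The next step is to check that $\Psi$ is well defined on line-isotopy classes: if $c_0$ and $c_1$ are line isotopic, then by the ``only if'' direction of the preceding theorem their generic projections are related by $\Omega$-moves and planar isotopy, hence represent the same knotoid. Injectivity of $\Psi$ is precisely the ``if'' direction of the preceding theorem: if two generic curves have generic projections that are equivalent as knotoid diagrams, the curves are line isotopic, so they determine the same class. For surjectivity, given a knotoid $k \in \mathcal{K}(\mathbb{R}^2)$, pick any knotoid diagram $K$ representing $k$, regard it as lying in $\mathbb{R}^2 \times \{0\} \subset \mathbb{R}^3$, and carry out the construction of Section~2.1: push the overpasses of $K$ into the upper half-space and the underpasses into the lower half-space, and attach the tail and the head to the vertical lines $t \times \mathbb{R}$ and $h \times \mathbb{R}$. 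This yields a generic curve whose projection along these lines is $K$, so $\Psi$ sends its class to $k$. If a different diagram representative of $k$ is used, the resulting curve is line isotopic to the first by well-definedness together with injectivity, so no ambiguity arises. Since line isotopy is an equivalence relation, $\Psi$ is then a bijection, which is the assertion of the corollary.

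The main point requiring care is the well-definedness step, because an arbitrary smooth line isotopy between $c_0$ and $c_1$ may pass through curves whose projections fail to be generic, so that the preceding theorem does not literally apply to the whole path. This is dealt with exactly as in the proof of that theorem: transfer to the piecewise linear category, put the ambient isotopy of the complement of the two lines in general position so that it becomes a finite composition of triangle moves, subdivide the triangles and edges so that each triangle projects to a non-singular triangle in the plane, and observe that the finitely many ways in which genericity of the projection can break down along the isotopy are the shadow configurations of Figures~\ref{fig:sub}--\ref{fig:tri}, each of which is realized by $\Omega$-moves or by finite combinations of them. With this in hand the corollary becomes a reformulation of the theorem as a statement about sets.
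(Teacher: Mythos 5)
Your proposal is correct and follows the route the paper intends: the corollary is stated without a separate proof precisely because it is the packaging of Theorem 2.1 (well-definedness and injectivity from its two directions) together with the lifting construction of Section 2.1 (surjectivity), which is exactly what you carry out, with the added and harmless care of perturbing an arbitrary curve to one generic with respect to the $xy$-plane.
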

 It may be the case that a smooth open oriented curve embedded in $\mathbb{R}^3$ is not generic with respect to the $xy$-plane but can be generic with respect to many other planes. Projecting the curve generically to these planes gives a set of knotoid diagrams in the planes. The line isotopy can be generalized to all the curves that is generic with respect to some plane and the theorem above generalizes as follows.
\begin{thm}
Two open oriented curves embedded in $\mathbb{R}^3$ that are both generic to a given plane, are line isotopic (with respect to the lines determined by the endpoints of the curves and the plane) if and only if the projections of the curves to that plane are equivalent knotoid diagrams in the plane.
\end{thm}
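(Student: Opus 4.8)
The plan is to reduce the statement to the special case of the $xy$-plane proved above, by applying a rigid motion of $\mathbb{R}^3$ that carries the given plane $P$ onto the $xy$-plane. First note that projecting a curve to $P$ and recording over/under information presupposes a choice of unit normal $n$ of $P$ (a choice of which side of $P$ is ``above''), just as the $xy$-plane carries the normal $(0,0,1)$; fix such an $n$. Then pick an isometry $g\colon\mathbb{R}^3\to\mathbb{R}^3$ with $g(P)$ equal to the $xy$-plane and $g(n)=(0,0,1)$ --- compose a rotation taking $n$ to $(0,0,1)$ with a translation taking a point of $P$ to the origin; one may take $g$ orientation-preserving. Since $g$ is an affine isometry preserving the normal direction, it carries lines perpendicular to $P$ to lines perpendicular to the $xy$-plane, carries curves generic with respect to $P$ to curves generic with respect to the $xy$-plane, and intertwines the two projections: the projection of $g(c)$ to the $xy$-plane is obtained from the projection of $c$ to $P$ by the planar isometry $g|_P$, with over/under data preserved because $g$ respects the order along each projection fibre.

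Next I would verify that the two equivalence relations occurring in the statement are natural under $g$. Conjugating a line isotopy between curves $c_0$ and $c_1$ (with respect to the perpendicular lines determined by $P$) by $g$ produces an ambient isotopy of $\mathbb{R}^3$ carrying $g(c_0)$ to $g(c_1)$ and carrying the relevant family of lines perpendicular to $P$ onto lines perpendicular to the $xy$-plane; applying $g^{-1}$ gives the reverse implication, so $c_0$ and $c_1$ are line isotopic with respect to $P$ if and only if $g(c_0)$ and $g(c_1)$ are line isotopic with respect to the $xy$-plane. In the same way, $g$ carries $\Omega$-moves to $\Omega$-moves and planar isotopies of $P$ to planar isotopies of the $xy$-plane, and $(g|_P)^{-1}$ does the reverse, so two knotoid diagrams in $P$ are equivalent if and only if their $g$-images in the $xy$-plane are equivalent knotoid diagrams. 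Applying the $xy$-plane version of the theorem to $g(c_0)$ and $g(c_1)$ and translating both sides of the resulting biconditional back through $g$ gives exactly the assertion.

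The geometric substance is thus confined to the $xy$-plane case already in hand, and the single point I would make sure to state carefully is the bookkeeping about the perpendicular lines --- the same point that is needed there. The definition of line isotopy fixes a pair of perpendicular lines once and for all, whereas equivalence of knotoid diagrams allows planar isotopies that relocate the endpoints; before invoking the reduction I would therefore note explicitly, exactly as for the $xy$-plane, that sliding the feet of the two perpendicular lines within $P$, keeping the lines perpendicular to $P$, is realized by an ambient isotopy of $\mathbb{R}^3$ and so is already included in line isotopy with respect to $P$. Beyond this I do not anticipate any real obstacle: genericity, the verticality of projection fibres, the crossing data, and both equivalence relations are all transported simultaneously by the rigid motion $g$, so the theorem follows from its $xy$-plane special case.
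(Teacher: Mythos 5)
Your reduction is correct, and it matches the paper's intent: the paper gives no separate argument for this statement, simply asserting that the $xy$-plane case (proved via the piecewise-linear triangle-move argument) ``generalizes'' to an arbitrary projection plane, and transporting everything by a rigid motion $g$ carrying $P$, its chosen normal, and its perpendicular fibres to the $xy$-plane and $(0,0,1)$ is exactly the way to make that assertion precise. Your closing bookkeeping remark about the perpendicular lines is an issue already present in the $xy$-plane case and is handled (or glossed over) there, so nothing new is needed for the general plane.
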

 We say that a knotoid in a plane \textit{represents} an open oriented embedded curve in $\mathbb{R}^3$ if the knotoid is in the equivalence class of the generic projection of the curve to some plane.

 The equivalence classes of knotoids in the planes all representing the same open curve embedded in $3$-dimensional space, can vary with respect to the projection plane. For instance, the projection of the curve represented in Figure $3$(b) to the $yz$-plane gives a knotoid diagram with the tail and the head in the unbounded region of the plane and one can see that it is equivalent to the trivial knotoid in the $yz$- plane. The projection to the $xy$- plane, however, is the knotoid diagram given in Figure \ref{fig:knotoid}(c) and we will show in Section 3.3 that this knotoid is a nontrivial knotoid in $S^2$ so is nontrivial in $\mathbb{R}^2$.
\begin{figure}[H]
 \begin{center}
   \begin{tabular}{c}
     \centering  \scalebox{0.6}{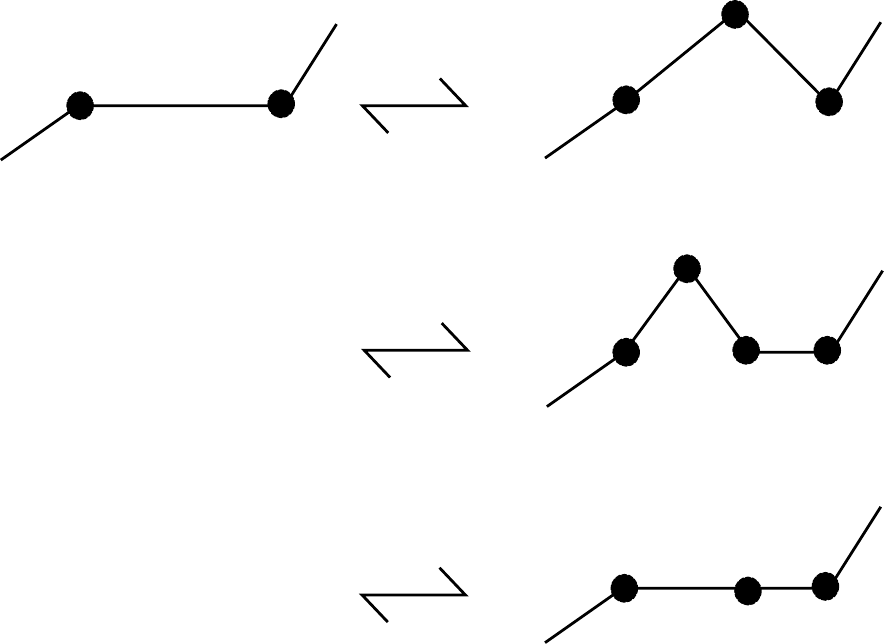}
     \end{tabular}
     \caption{\bf Subdivision of an edge }
     \label{fig:sub}
\end{center}
\end{figure}

\begin{figure}[H]
 \begin{center}
   \begin{tabular}{c}
     \centering  \scalebox{0.75}{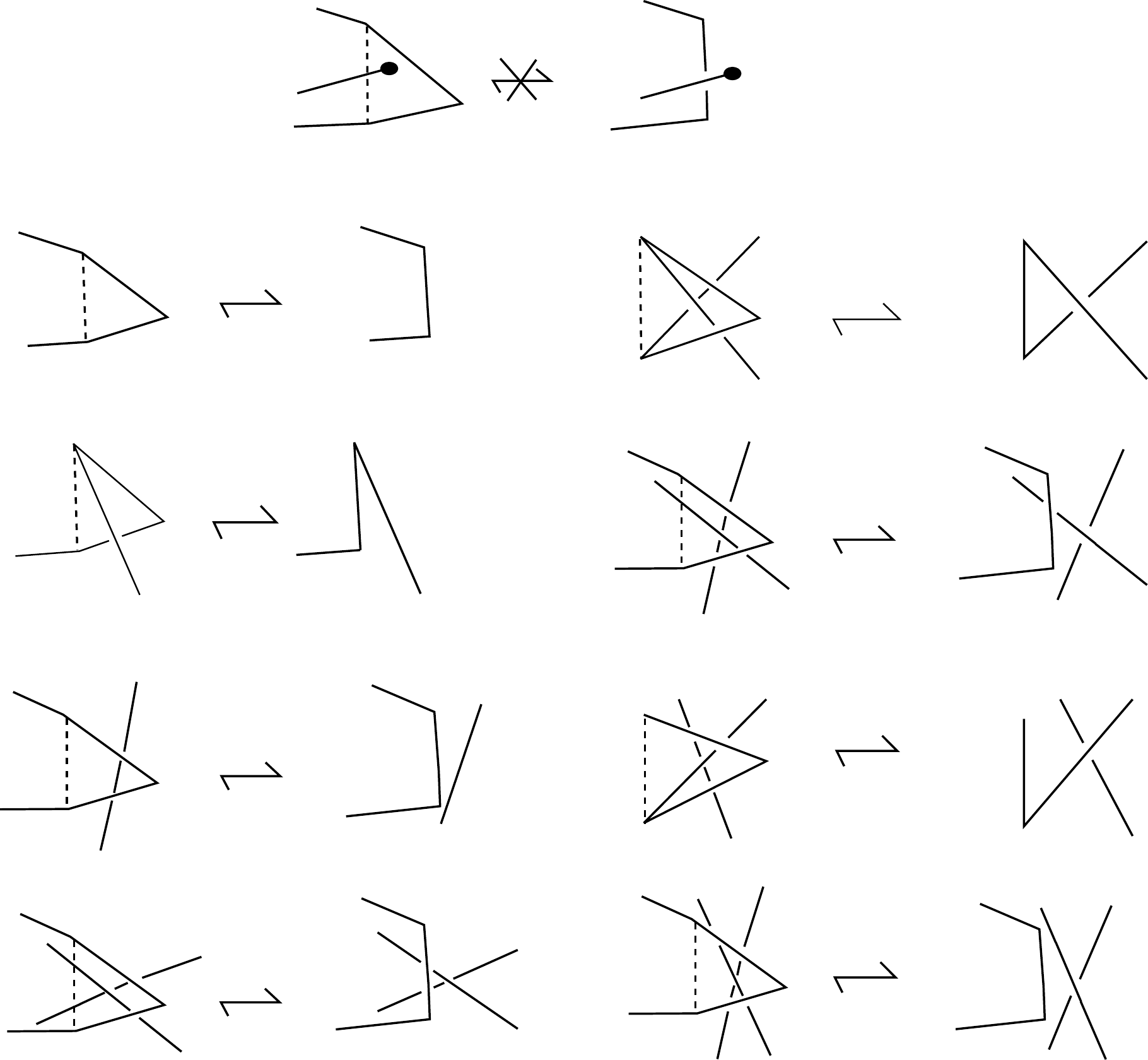}
     \end{tabular}
     \caption{\bf Shadow of triangular moves  }
     \label{fig:tri}
\end{center}
\end{figure}
 Given a smooth open oriented curve embedding $C$ in $\mathbb{R}^3$, we define $\mathcal{P(C)}$ to be the set of all knotoid equivalence classes obtained from generic projection of C to planes in $\mathbb{R}^3$ that are outside a ball containing C. We take $\mathcal{P(C)}$ as a measure of the ‘knottedness’ of $C$. Each knotoid in $\mathcal{P(C)}$ can represent three knots; two classical knots via the underpass and overpass closures, and a virtual knot with genus at most one via the virtual closure map, see Section 3.3 for the virtual closure map. The set of knots that are represented by knotoids in $\mathcal{P(C)}$ can be used as possible closures of $C$. This is a new way to measure the knottedness of an open curve and may have significant applications in the study of tangled physical systems. The reader should compare our definition with the work of \cite{De}.

 Figure \ref{fig:bow} shows a semi-realistic picture of an open curve in space depicting a bowline knot and a possible closure of that knot. The same picture can be viewed (by making the lines more abstract) as a knotoid projection of the bowline. The right-hand picture is the underpass closure of this knotoid that is an eight crossings classical knot. On the other hand, the overpass closure is a classical knot with six crossings. The interested reader is encouraged to find the corresponding classical knots in the knot table \cite{KnI}.  
Moreover the virtual closure of this knotoid is a nontrivial, genus one virtual knot. (The reader can verify that the virtual closure of the knotoid is not a classical knot since the odd writhe of the resulting knot is $-2$). We suggest that a collection of closures of an open curve in space can be obtained by considering both underpass and overpass and also virtual closures of all the generic knotoid projections of the given open curve.
\begin{figure}[H]
\begin{center}
     \begin{tabular}{c}
     \includegraphics[width=7cm]{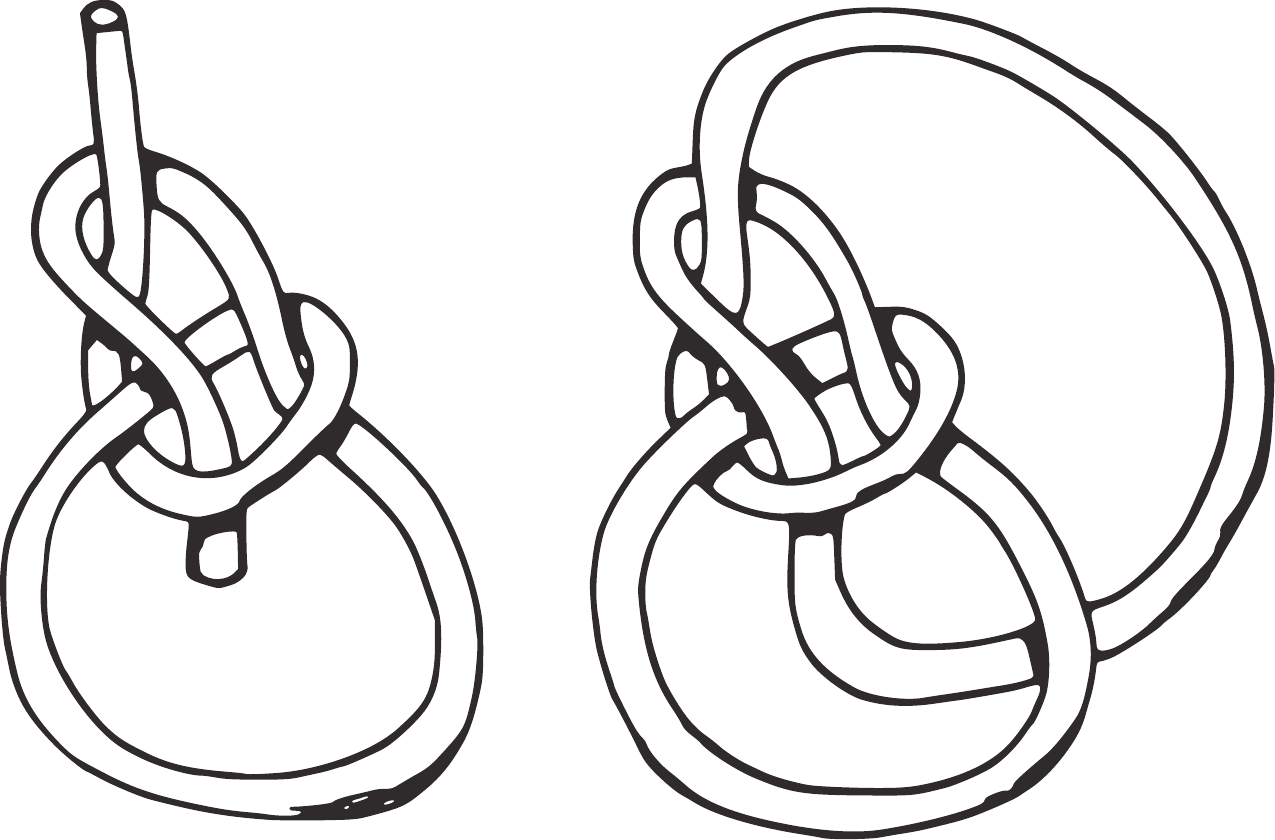}
     \end{tabular}
     \caption{\bf A closure of bowline knot projection }
     \label{fig:bow}
\end{center}
\end{figure}
\section{Virtual Knots and Virtual Knotoids}
\normalfont
 The theory of virtual knots, introduced by L.H.~Kauffman \cite{Ka1,Ka2} in 1996, studies the embeddings of circles in thickened surfaces modulo isotopies and diffeomorphisms of the surface and one-handle stabilization of the surfaces. Virtual knot theory has a diagrammatic formulation. 
In the diagrammatic theory, virtual knots and links are represented by diagrams with finitely many transversal crossings called the \textit{classical crossings} and \textit{virtual} crossings that are neither an over-crossing nor an under-crossing. A virtual crossing is an artifact of the representation of the virtual knot diagram in the plane (or equivalently in $S^2$), and it is indicated by two crossing segments with a small circle placed around the crossing point.

 The moves on virtual diagrams are generated by the usual Reidemeister moves plus the detour move. The detour move allows a segment with a consecutive sequence of virtual crossings to be excised and replaced any other such a segment with a consecutive virtual crossings, as shown in Figure 7(a). 
 Virtual knot and link diagrams that can be connected by a finite sequence of these moves are said to be \textit{equivalent} or \textit{virtually isotopic}.

 Virtual knots and links can also be represented by embeddings without any virtual crossings in thickened orientable surfaces just as non-planar graphs may be embedded in surfaces of some genus. There is a unique abstract knot/link diagram assigned to each virtual knot/link diagram. Abstract knot/link diagrams are associated to thickened closed connected orientable surfaces in which the knot diagram is embedded. For more details on abstract diagrams and their association with thickened surfaces, see \cite{KAKA,CKS}. Here we state the following theorem. 
 \begin{thm} (\cite{Ka1, Ka3, Ka4, CKS})
Two virtual link diagrams are virtually isotopic if and only if their surface embeddings are equivalent up to isotopy in the surface, diffeomorphisms of the surface, and addition/removal of empty handles.
\end{thm}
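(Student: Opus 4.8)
The plan is to prove the biconditional by building an explicit, move-by-move dictionary between virtual diagrams and pairs (closed orientable surface, embedded link), and then checking that the generators of each equivalence relation map to generators of the other. First I would recall the \emph{abstract diagram} $\Sigma_D$ associated to a virtual link diagram $D$: replace each classical crossing by a small disk carrying the usual over/under picture, replace each edge of the underlying $4$-valent graph by a band, and glue the bands to the disks respecting the cyclic order of edges at each crossing, recording \emph{no} information at the virtual crossings (so two strands meeting virtually contribute two disjoint bands). Since $D$ is drawn in the oriented plane, every local piece inherits a compatible orientation, so $\Sigma_D$ is a compact orientable surface with boundary carrying the link diagram $D$ (with its virtual crossings deleted) as a spine. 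Capping each boundary circle of $\Sigma_D$ with a disk (and tubing pieces together if the underlying graph is disconnected) produces a closed connected orientable surface $\widehat\Sigma_D$ with an embedded link $L_D$; this is the candidate assignment $D\mapsto(\widehat\Sigma_D,L_D)$.

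For the forward direction I would check that each generating move of virtual isotopy induces an allowed operation on $(\widehat\Sigma_D,L_D)$. The moves $\Omega_1,\Omega_2,\Omega_3$ are supported in a disk of the diagram; the induced change of $\Sigma_D$ is likewise supported in a disk and, after capping, is realized by an ambient isotopy of $\widehat\Sigma_D$ carrying $L_D$ through the standard Reidemeister picture there. The detour move replaces one embedded arc of virtual crossings by another with the same endpoints; since virtual crossings are invisible to $\Sigma_D$, the abstract diagram is unchanged up to homeomorphism, but the genus appearing after capping may change, so the net effect on $(\widehat\Sigma_D,L_D)$ is a diffeomorphism together with addition or removal of empty handles. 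Hence virtual isotopy implies surface equivalence. For the converse I would run the construction the other way: present a closed orientable surface $\Sigma$ as a disk with $1$-handles attached in the plane, and given $L\subset\Sigma$ push $L$ off the cocores as far as possible and project to the plane of the disk, recording each passage of a strand through a handle as a bundle of virtual crossings, obtaining a virtual diagram $D_{\Sigma,L}$. Then one verifies that an ambient isotopy of $L$ in $\Sigma$ induces planar isotopies, classical Reidemeister moves and detour moves; a diffeomorphism of $\Sigma$ changes the handle presentation and so only reshuffles the virtual bundles, which is a sequence of detour moves; and adding or removing an empty handle introduces or deletes a $1$-handle through which no strand passes, so the strands running past it are pushed off by a detour. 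Finally $D\mapsto(\widehat\Sigma_D,L_D)$ and $(\Sigma,L)\mapsto D_{\Sigma,L}$ are mutually inverse up to the relevant equivalences, which closes the argument.

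The main obstacle, and the only place where genuine care is needed, is the interaction between the detour move and handle (de)stabilization: one must show that an empty handle can always be removed by detour moves even when strands of $L$ run \emph{around} it without threading it, and, dually, that the genus jump produced by any detour move is exactly an empty-handle stabilization and not some more complicated surface change. The cleanest way to control this is to organize the whole proof around the abstract diagram $\Sigma_D$: it is manifestly invariant under detour moves and planar isotopy, two virtual diagrams have homeomorphic abstract diagrams if and only if they are related by detour moves and planar isotopy (the Carter--Kamada--Saito normal-form argument), and any two cappings of a given $\Sigma_D$ differ by empty-handle stabilizations. Both implications of the theorem then reduce to bookkeeping on top of these invariance statements. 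Accordingly I would structure the argument with $\Sigma_D$ as the fundamental object and invoke \cite{Ka1,Ka3,Ka4,CKS} for the technical lemmas on abstract diagrams and their associated thickened surfaces.
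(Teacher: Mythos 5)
Your overall architecture -- use the abstract diagram $\Sigma_D$ as the pivot, cap its boundary circles to get a closed surface, and check that the generators of each equivalence relation translate into generators of the other -- is exactly the standard route, and it is also how this paper handles the knotoid analogue: the paper itself does not prove this theorem (it is stated with citations), but its Propositions 3.2 and 3.3 prove the corresponding statement for knotoids by precisely this two-step bijection (virtual diagrams $\leftrightarrow$ abstract diagrams $\leftrightarrow$ diagrams in surfaces up to stable equivalence). So the strategy is sound and matches the paper's treatment.

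However, your move-by-move dictionary contains a concrete error, and it sits exactly at the point you yourself single out as the crux. You claim that the classical moves $\Omega_1,\Omega_2,\Omega_3$ are realized by ambient isotopy of the capped surface $\widehat\Sigma_D$, and that it is the detour move which ``may change the genus after capping,'' so that empty-handle stabilization enters there. Both claims are wrong, and the second contradicts your own (correct) observation that the abstract diagram is unchanged by a detour move: since the capping is determined by $\Sigma_D$ (glue a disk to every boundary circle), a detour move leaves $\widehat\Sigma_D$ unchanged up to diffeomorphism, with no genus change at all. The genus jump comes instead from the classical $\Omega_2$ move: the two arcs being pushed across each other are adjacent in the planar picture, but in $\widehat\Sigma_D$ they may lie on the boundaries of two \emph{different} faces (the faces are read off by walking the diagram and ignoring virtual crossings, not by the planar regions), in which case the move is not an isotopy of $\widehat\Sigma_D$ but an $\Omega_2$ move together with the addition or removal of an empty handle -- an Euler characteristic count shows $\Omega_1$ and $\Omega_3$ preserve the capped genus while $\Omega_2$ can change it by one. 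This is exactly the bookkeeping the paper records in its proof of Proposition 3.3 (``an abstract $\Omega_2$-move may increase/decrease the genus of the surface by $1$''). As written, your forward direction's justification fails; it is repaired by reassigning the stabilization to $\Omega_2$ and letting the detour move act trivially on $(\widehat\Sigma_D,L_D)$. The converse direction and your closing plan to organize everything around invariance of the abstract diagram are fine once this is corrected.
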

\subsection{Virtual Knotoids}
 We extend classical knotoid diagrams to virtual knotoid diagrams in a combinatorial way. \textit{Virtual knotoid diagrams} are defined to be the knotoid diagrams in $S^2$ with an extra combinatorial structure called \textit{virtual crossings}. A virtual crossing is indicated by a circle around the crossing point of two strands, as in the case of virtual knots. Figure 9(c) and \ref{fig:projection} depict some examples of virtual knotoid diagrams. The moves on virtual knotoid diagrams are generated by the $\Omega$-moves and the detour move, shown in Figure \ref{fig:moo}. Some of the special cases of the detour move is depicted in Figure 7(b) and 7(c). The first three moves in Figure 7(b) are referred as virtual $\Omega_{i=1,2,3}$-moves, and the fourth move is referred as a \textit{partial virtual move}. The move given in Figure 7(c), is called the virtual $\Omega$-move and denoted by $\Omega_v$. The virtual $\Omega$-move enables to slide back/forth the strand which is adjacent to the tail or the head, deleting/creating virtual crossings located consecutively on the strand. The $\Omega_v$- move decreases or increases the number of virtual crossings and changes the locations of the endpoints. Figure $7$(b) and Figure $7$(c) depict all special cases of the detour move. These moves together with the $\Omega$-moves are called the \textit{generalized $\Omega$}-moves.

 The generalized $\Omega$- moves define an equivalence relation on virtual knotoid diagrams. We say that two virtual knotoid diagrams are \textit{virtually equivalent} if one can be obtained from the other by a finite sequence of the generalized $\Omega$-moves and isotopy of $S^2$. A \textit{virtual knotoid} is an equivalence class of virtual knotoid diagrams under this equivalence.
\vspace{-0.30cm} 
\begin{figure}[H]
    \centering
	\begin{subfigure}[a]{0.5\textwidth}
        \centering  \scalebox{0.27}{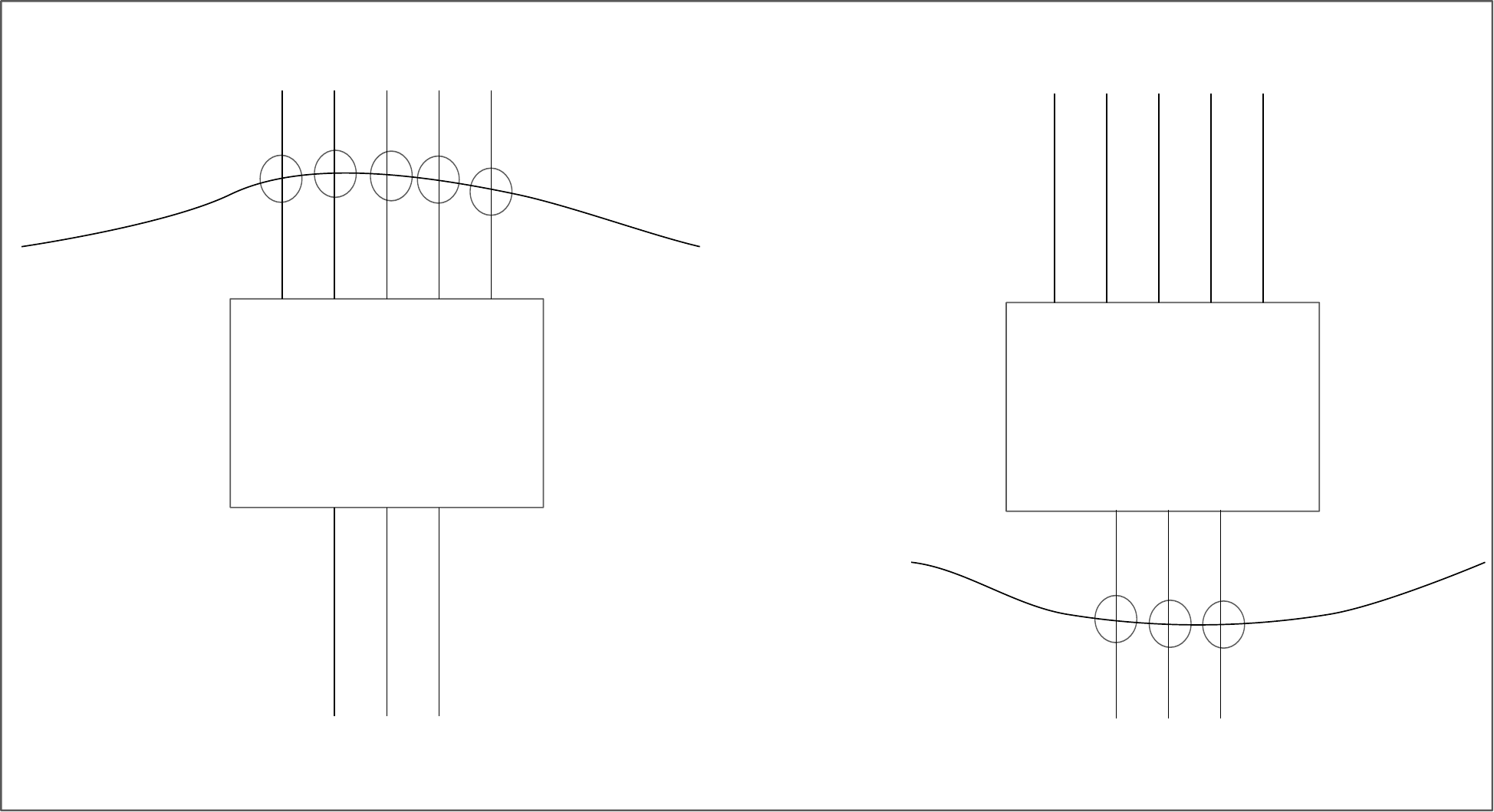}
        \caption{\bf The detour move}
        \label{subfig:det}
    \end{subfigure}
		 
			\begin{subfigure}[b]{0.75\textwidth}
			\centering  \scalebox{0.27}{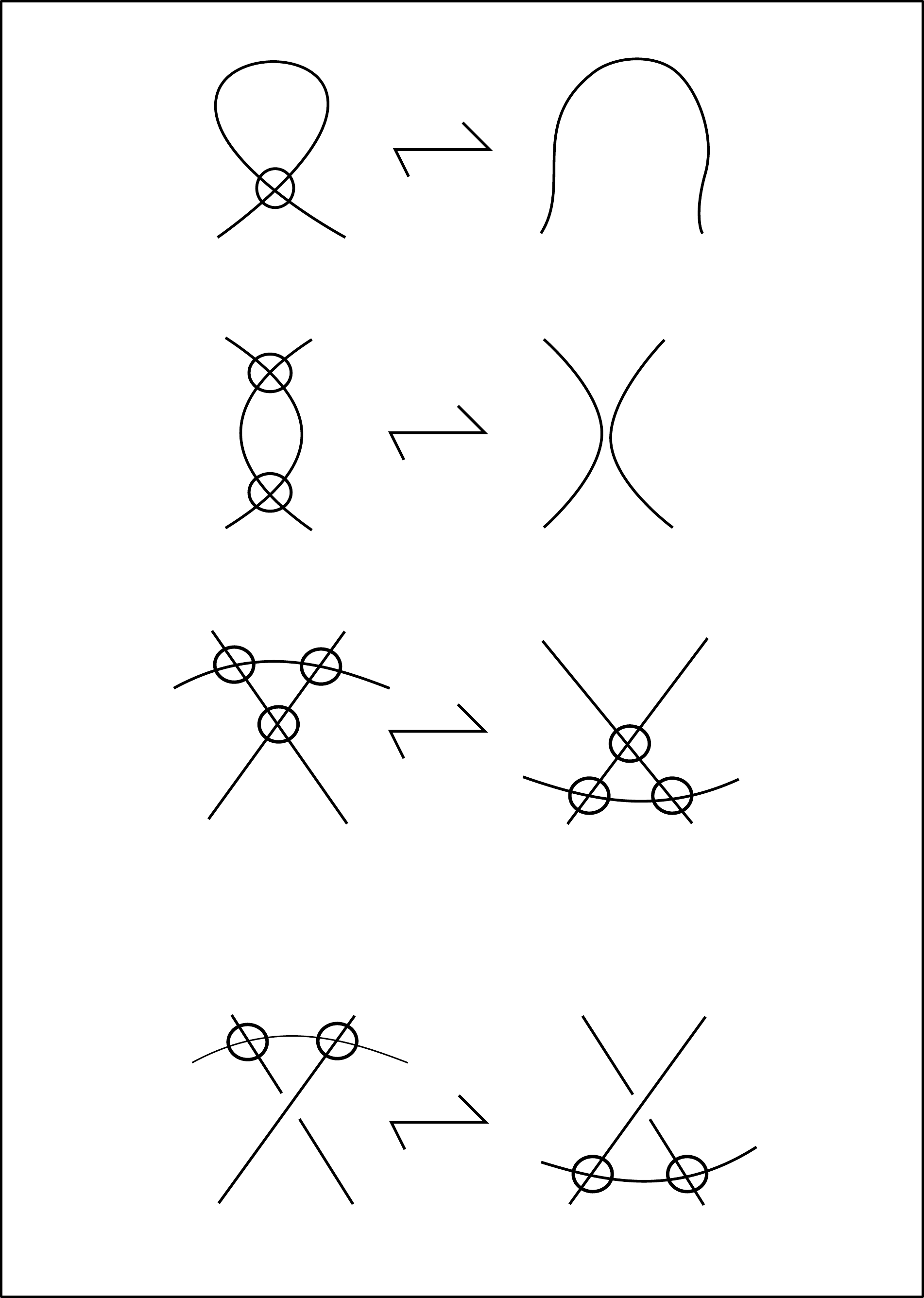}
        \caption{\bf Virtual $\Omega_{i=1,2,3}$-moves and a partial virtual move}
        \label{subfig:m_a}
    \end{subfigure}
		
		\begin{subfigure}[c]{0.5\textwidth}
      \centering  \scalebox{0.27}{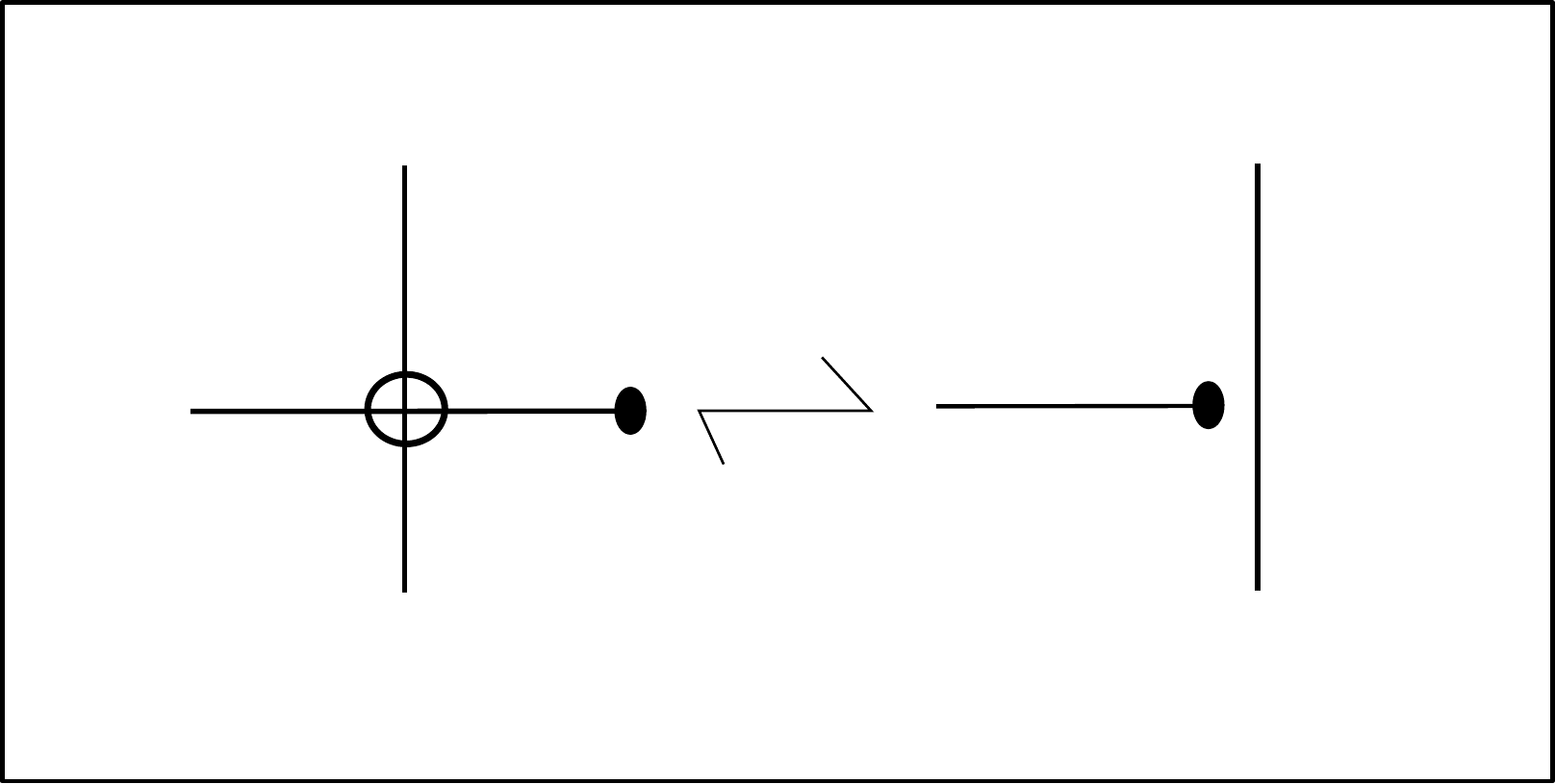}
		\caption{\bf $\Omega$-virtual move}
        \label{subfig:vir}
    \end{subfigure}
	    ~ 
	\caption{\bf Moves on virtual knotoid diagrams}
	\label{fig:moo}
\end{figure}
 There are two more moves on virtual knotoid diagrams shown in Figure \ref{fig:forbid} which resemble the Reidemeister moves but do not result from any of the $\Omega$-moves or the detour move.  We call them \textit{virtual forbidden moves}. The virtual forbidden moves slide either an underpassing or overpassing under/over a virtual crossing and they are denoted by $\Phi_{under}$ and $\Phi_{over}$, respectively. These moves are the forbidden moves of closed virtual knots/links since the allowance of these moves trivializes the theory of virtual knots \cite{Ne}. By an observation on the effect of the virtual forbidden moves on the corresponding chord diagrams of knotoid diagrams (see Section 4.1 for the chord diagrams), it can be shown that any virtual knotoid diagram can be transformed to the trivial knotoid diagram. The utilization of only the over-forbidden move, $\Phi_{over}$ yields a nontrivial theory called \textit{welded knot theory} \cite{Ka1, Ro}. We define the corresponding \textit{welded knotoid theory}.
\begin{definition}\normalfont
Two virtual knotoid diagrams are said to be \textit{w-equivalent} if they can be obtained from one another by a sequence of the generalized $\Omega$-moves, the over-forbidden move, $\Phi_{over}$ and the $\Phi_-$- move (see Figure $2$(b)). The corresponding equivalence classes are called \textit{welded virtual knotoids}.
\end{definition} 
 S.~Satoh \cite{Sa} defines \textit{w-equivalence} on virtual knotoid diagrams (named as \textit{virtual arc diagrams} in \cite{Sa}) just in the same way. The fundamental group of a virtual knotoid diagram is given by the generators associated to the overpasses of the diagram and at each classical crossing there is a relation defined in the same way with the relations of Wirtinger presentation \cite{GP} of knot groups. Note that the fundamental group of any knotoid diagram $K$ in $S^2$ is invariant under the $\Omega$-moves and the $\Phi_-$-move, and the fundamental group of $K$ is isomorphic to the fundamental group of the classical knot represented by the underpass closure of $K$, see \cite{Tu} for more details and also \cite{Sa} in which this concept was given in terms of w-equivalences of classical arc diagrams. Satoh shows that any two w-equivalent virtual knotoid diagrams represent equivalent ribbon $2$-knots in $\mathbb{R}^4$ and the fundamental group of the complement of any ribbon $2$-knot is isomorphic to the fundamental group of the associated welded virtual knotoid.
 \begin{figure}[H]
\begin{center}
     \begin{tabular}{c}
		\centering\scalebox{0.30}{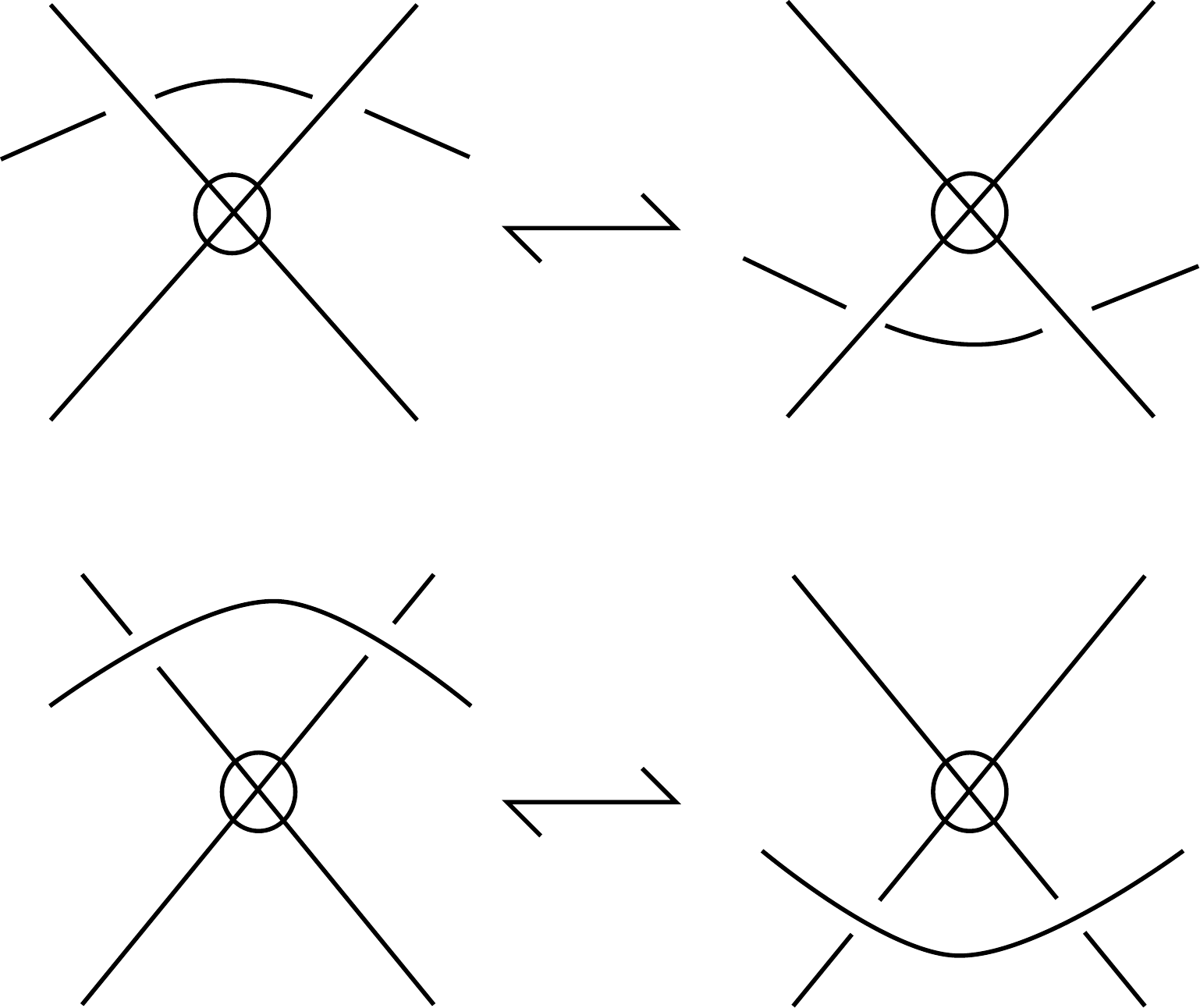}
		\end{tabular}
     \caption{\bf Virtual forbidden moves}
     \label{fig:forbid}
\end{center}
\end{figure}
\begin{definition}\normalfont
 Let $\mathcal{M}$ be a category of mathematical structures.
A virtual knotoid invariant is a mapping $I$: Virtual Knotoids $\rightarrow$ $\mathcal{M}$ such that virtually equivalent knotoids map to equivalent structures in $\mathcal{M}$.
\end{definition}
  The theory of virtual knotoids has a topological interpretation. Knotoid diagrams can be defined in higher genus, closed, connected, orientable surfaces as generic immersions of the unit interval, with two distinct endpoints as the images of $0$ and $1$, and with finitely many transversal double points each endowed with over/under-data so that they are classical crossings. 

 Let $K$ be a virtual knotoid diagram. An \textit{abstract knotoid diagram} associated to $K$, $(F,K)$ is a ribbon-neighborhood surface containing the knotoid diagram $K$. This surface is obtained by attaching a $2$-disc to each classical crossing and to the two endpoints of $K$ such that the crossings and the endpoints are contained in the discs, and connecting these discs by ribbons, as depicted in  Figure $9$. The virtual crossings are represented by ribbons that pass over one another. The abstract knotoid diagrams are pictured as embedded in $3$-dimensional space, but they are not considered as particular embeddings. The ribbons containing virtual crossings can pass over one another in either way. There is a unique abstract knotoid diagram associated to a virtual knotoid diagram. Note that an abstract knotoid diagram is a closed connected orientable surface with boundary.  
\begin{figure}[H]
\hspace{-1.5cm}
   \begin{subfigure}[b]{1\textwidth}
        \centering  \scalebox{0.5}{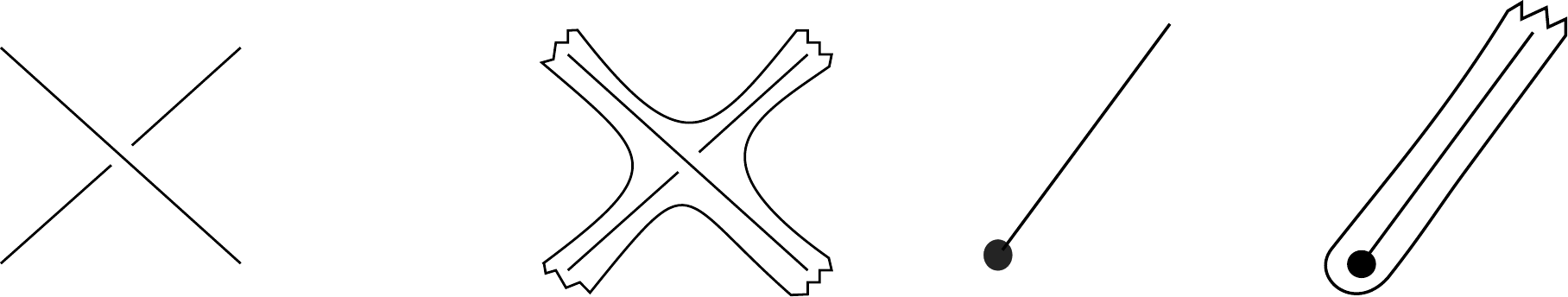}
        \caption{\bf Attaching discs to a classical crossing and to an endpoint}
        \label{fig:abs1}
    \end{subfigure}
		\vspace{3mm}
			
    \begin{subfigure}[b]{1\textwidth}
        \centering  \scalebox{0.4}{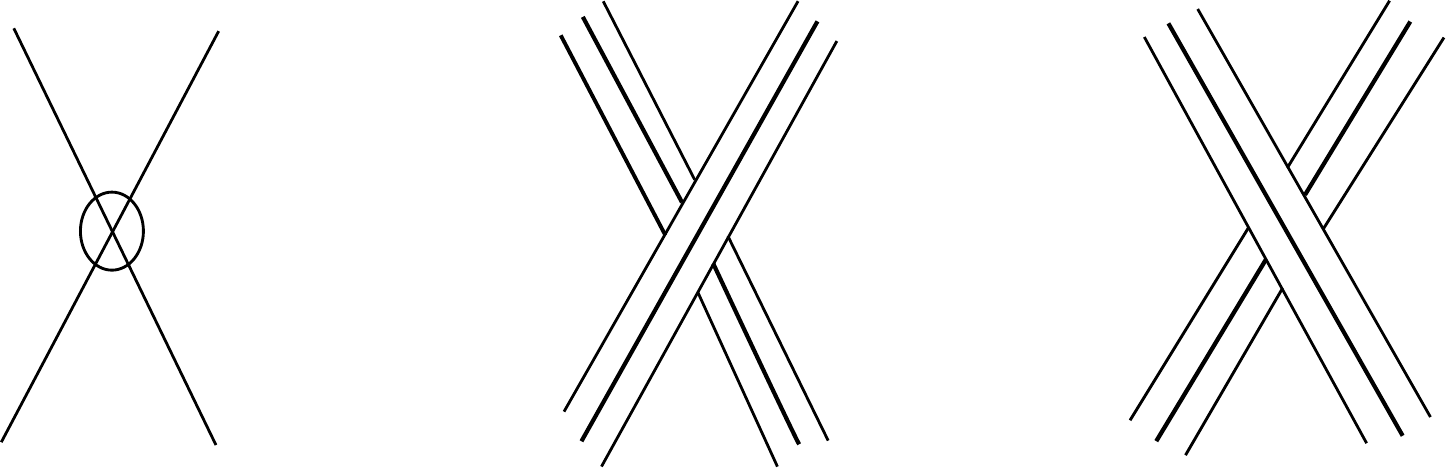}
        \caption{\bf Two ways of attaching bands to a virtual crossing}
        \label{subfig:abs}
    \end{subfigure}
		\vspace{4.5mm}
    ~ 
		\begin{subfigure}[b]{1\textwidth}
        \centering  \scalebox{0.35}{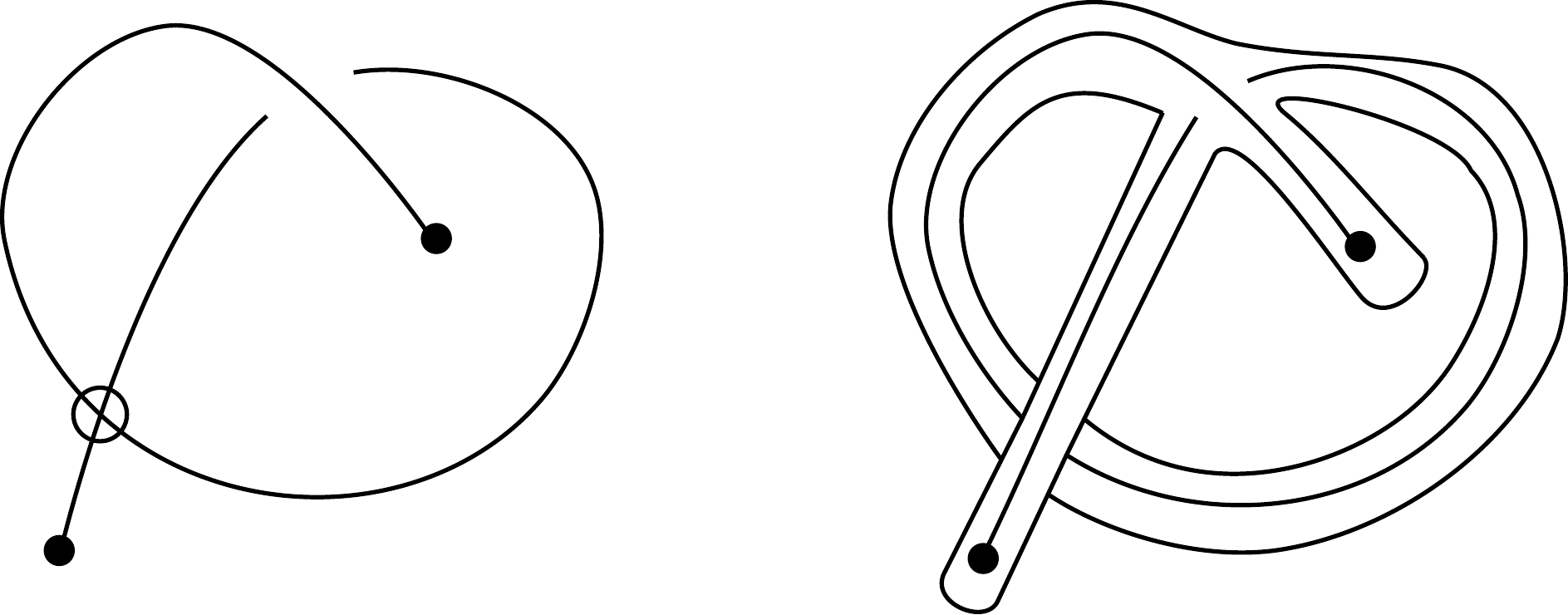}
        \caption{\bf A virtual knotoid diagram and the associated abstract knotoid diagram}
        \label{fig:abs3}
    \end{subfigure}
\caption{Abstract knotoid diagrams}
\end{figure}
 We say that two abstract knotoid diagrams are \textit{abstractly equivalent} if one can be obtained from the other one by finitely many \textit{abstract} $\Omega$-moves that are shown in Figure \ref{subfig:genabs}. Abstract $\Omega$-moves are ribbon versions of the generalized $\Omega$-moves. 
The abstract detour move is accomplished by the freedom of movement of the virtual crossings represented by non-interacting ribbon bands. An \textit{abstract knotoid} is defined to be an equivalence class of abstract knotoid diagrams under these moves.
\begin{figure}[H]
\centering  \scalebox{0.65}{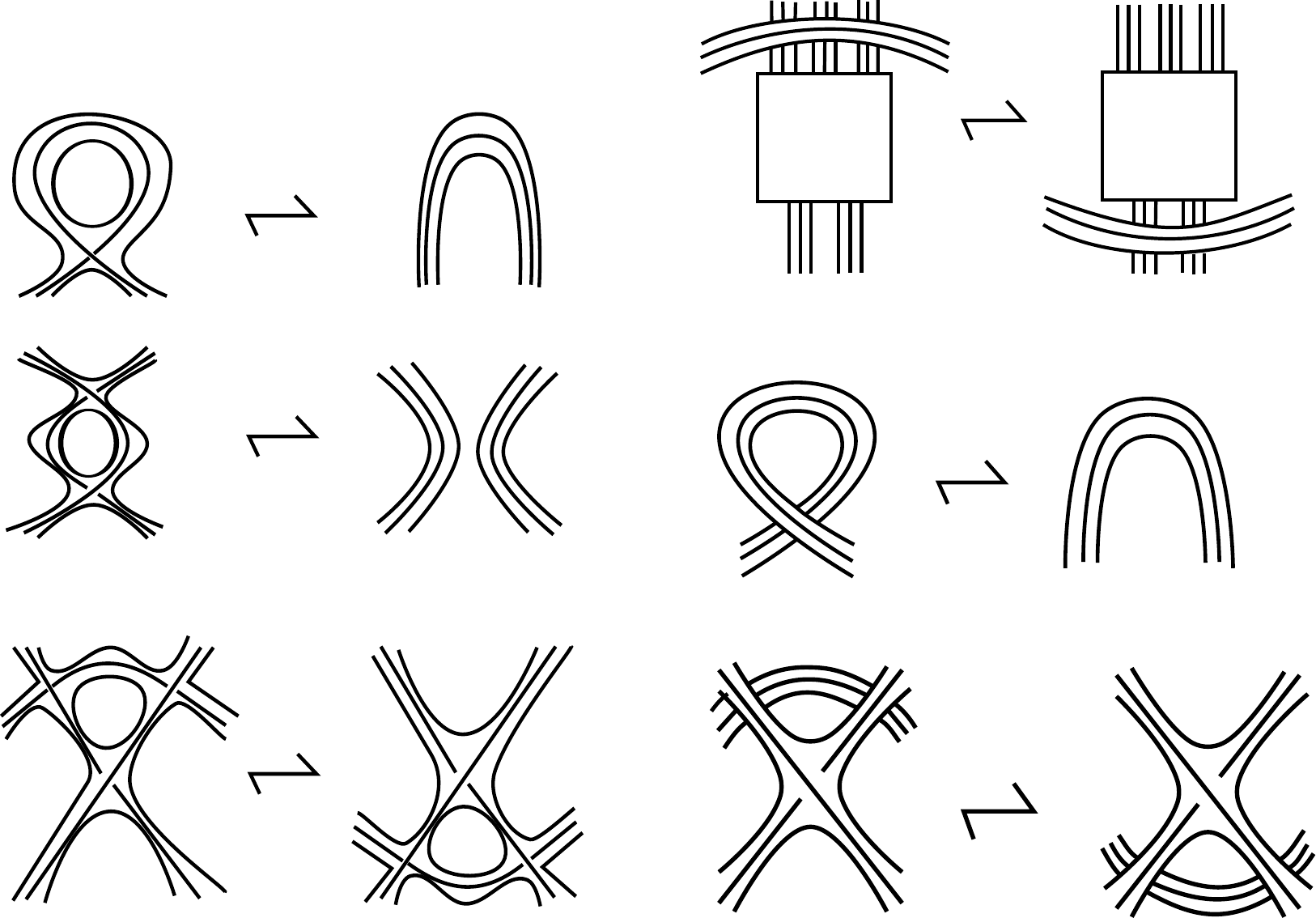}
        \caption{\bf Generalized abstract moves }
        \label{subfig:genabs}
				\end{figure}	
\begin{prop}
The mapping
\begin{center}
$f$:Virtual Knotoid Diagrams $\rightarrow$ Abstract Knotoid Diagrams,
\end{center}
that is defined by assigning to a virtual knotoid diagram $K$ the associated abstract knotoid diagram $(F,K)$ induces a bijection
\begin{center}
$f_*$: Virtual Knotoid Diagrams/virtual eqv. $\rightarrow$ Abstract Knotoid Diagrams/abstract eqv.
\end{center}
\end{prop}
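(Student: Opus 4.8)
I would prove this by adapting the classical equivalence between virtual link diagrams and abstract link diagrams (Kamada--Kamada; Carter--Kamada--Saito, cf.\ Theorem~3.2) to the knotoid setting, where the only structural novelty is the presence of the two extra $2$-discs at the endpoints and the associated moves (the partial virtual move and $\Omega_v$). There are three things to verify, the well-definedness of $f$ on diagrams being already granted in the text: first, that $f$ carries virtually equivalent diagrams to abstractly equivalent ones (so $f_*$ is well defined); second, that $f_*$ is surjective; third, that $f_*$ is injective.

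\textbf{Well-definedness of $f_*$.} I would proceed move by move through the generalized $\Omega$-moves. A classical $\Omega_i$-move on $K$ is supported in a disc meeting no virtual crossing, whose ribbon neighbourhood is a disc; the move is carried verbatim to the $A\Omega_i$-move on $(F,K)$. For the detour move and its special cases --- the virtual $V\Omega_i$-moves, the partial virtual move, and the endpoint move $\Omega_v$ --- the strand being moved carries only virtual crossings, which in $(F,K)$ are encoded by bands passing over one another without interacting; repositioning such a strand, including sliding it onto or off an endpoint disc in an $\Omega_v$-move, is precisely the abstract detour move (which allows arbitrary repositioning of non-interacting bands and re-attachment of a band along a disc boundary). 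Planar isotopy of $S^2$ likewise induces an abstract ambient isotopy of the ribbon surface. Hence $f_*$ is well defined.

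\textbf{Surjectivity.} Let $(F,K)$ be an abstract knotoid diagram; $F$ is a compact connected orientable surface with non-empty boundary, hence admits an embedding in $\mathbb{R}^3$ in which the discs at the crossings and endpoints lie flat in a plane $P$ and the connecting bands form ribbons that project to $P$ with generic band-over-band crossings. Projecting $K\subset F$ to $P$ and recording each band-over-band crossing as a virtual crossing (the choice of which band is ``on top'' being immaterial, as two such choices differ by a detour move) yields a virtual knotoid diagram $K'$ in $\mathbb{R}^2\subset S^2$. By construction the ribbon neighbourhood of $K'$ is the projected discs-and-bands surface, which is abstractly isomorphic to $(F,K)$; thus $f_*[K']=[(F,K)]$.

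\textbf{Injectivity and the main obstacle.} Suppose $(F_1,K_1)$ and $(F_2,K_2)$ are joined by a finite sequence of abstract $\Omega$-moves and abstract detour moves. I would realize this sequence downstairs: embed $F_1$ in $\mathbb{R}^3$ as in the surjectivity step so that it projects to a virtual knotoid diagram that is, by uniqueness of the associated abstract diagram together with the well-definedness already proved, virtually equivalent to $K_1$; then push each abstract move through the projection. An abstract detour move projects, up to isotopy of $S^2$, to a sequence of detour moves and $\Omega_v$-moves; an $A\Omega_i$-move, being supported in a ribbon-disc, projects to an $\Omega_i$-move together with possibly some spurious band-crossings near the disc, all removable by detour moves. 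Iterating produces a finite sequence of generalized $\Omega$-moves and isotopies of $S^2$ taking $K_1$ to a virtual knotoid diagram whose associated abstract diagram is $(F_2,K_2)$, hence virtually equivalent to $K_2$; so $[K_1]=[K_2]$. The delicate point --- and the main obstacle --- is the bookkeeping in this last step: one must keep the position of $F_1$ in $\mathbb{R}^3$ controlled enough that each abstract move projects to a genuinely local planar move whose extra virtual crossings are exactly those the detour move can absorb. For closed virtual links this is the Kamada--Kamada argument; the genuinely new verification for knotoids is that an abstract detour move sliding a band onto or off an endpoint disc corresponds downstairs precisely to an $\Omega_v$-move (and that no forbidden move is ever forced by such a slide). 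Once that endpoint correspondence is pinned down, the remainder is the standard projection argument, and the three verifications together give the bijection $f_*$.
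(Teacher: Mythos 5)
Your proposal follows essentially the same route as the paper: well-definedness of $f_*$ is checked move by move (classical $\Omega$-moves to abstract $A\Omega_i$-moves, detour-type moves to the abstract detour move), and the converse direction is obtained by embedding the abstract diagram so that the crossing and endpoint discs lie in a plane in $S^2\subset S^3$ and projecting back to a virtual knotoid diagram, which the paper uses to define the inverse of $f_*$. Your surjectivity/injectivity split simply spells out the verification that this projection respects abstract equivalence, which the paper explicitly leaves to the reader, so the content is the same and correct.
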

\begin{proof}
 Let $K_1$, $K_2$ be two (virtually) equivalent virtual knotoid diagrams. It is not hard to verify that any $\Omega$-move between these diagrams transforms to abstract $\Omega$- moves between the corresponding abstract knotoid diagrams, $(F_1,K_1)$ and $(F_2,K_2)$. If the given diagrams are related to each other by moves generated by the detour move then $(F_1,K_1)$ and $(F_2,K_2)$ are related by the moves generated by the abstract detour move. This shows the map $f_*$ is well-defined.

 An abstract diagram $(F,K)$ can be embedded in $S^3$ in such a way that the $2$- disks containing the classical crossings and the endpoints lie in ${S^2}\subset{S^3}$. Being an orientable surface, the abstract knotoid diagram $(F,K)$ can be projected to $S^2$ as a virtual knotoid diagram. The segments through transversal ribbon bands are projected as transversal segments and the intersection points of the transversal segments are regarded as virtual crossings of a virtual knot diagram.  It is left to the reader to check that this projection map taken with the embedding induces a well-defined map from the set of abstract knotoids to the set of virtual knotoids and this map forms the inverse of $f_*$. 
\end{proof}
 Abstract knotoid diagrams are associated to knotoids in surfaces of higher genus in the following sense. 
The abstract knotoid diagram $(F,K)$ associated to a virtual knotoid diagram $K$ is a closed connected orientable surface with boundary. The \textit{underlying graph of a virtual knotoid diagram} is the graph that is obtained by turning the classical crossings and the endpoints of $K$ into graphical vertices, and keeping the virtual crossings as they are. The underlying graph of a virtual knotoid diagram is sometimes called a \textit{virtual graph}. A virtual graph is subjected to the detour move but not the $\Omega$-moves. 

 Let $\Gamma(K)$ be the underlying graph of $K$. $\Gamma(K)$ is a connected graph with $n$ four-valent vertices corresponding to classical crossings of $K$, two one-valent vertices corresponding to the endpoints of $K$, and with $2n+1$ edges. It is a consequence of the construction of $(F,K)$ that the graph $\Gamma(K)$ is a deformation retract of $(F,K)$.  We close the boundary components of $(F,K)$ with $2$-disks to have a representation of the virtual knotoid $K$ in a closed connected orientable surface, denoted by $\overline{(F,K)}$. Let $\delta$ be the number of boundary components of $(F,K)$. Then the Euler characteristic of $\overline{(F,K)}$ is equal to  $(n+2)-(2n+1)+\delta=1-n+\delta$ and the genus of$\overline{(F,K)}$ is equal to $1+((n-1)-\delta)/2$.

 The closure $\overline{(F,K)}$ is the least genus surface among the surfaces in which the knotoid diagram $K$ can be immersed without any virtual crossings. We can add extra handles in the complement of $K$ so that $K$ is represented by a diagram without any virtual crossings in other surfaces with higher genus. On the other hand, let be given a knotoid diagram $K$ in a surface of genus $\widetilde{g}$, $\Sigma_{\widetilde{g}}$. The regular neighborhood of the diagram $N(K)$ can be regarded as an abstract knotoid diagram $(N(K),K)$ immersed in $\Sigma_{\widetilde{g}}$. If the complement of $(N(K),K)$ has genus then we cut out this extra genus to reduce the genus $\tilde{g}$ to the genus of $\overline{(N(K),K)}$.
\begin{definition}
Let $K_1$, $K_2$ be two knotoid diagrams in surfaces $\Sigma_1$, $\Sigma_2$, respectively. The surface representations $(\Sigma_1, K_1)$ and $(\Sigma_2, K_2)$ are said to be \textit{stably equivalent} if one is obtained from the other by isotopy of the surfaces, diffeomorphisms of the surfaces and the addition/subtraction of empty handles in the complement of the diagrams. 
\end{definition}
\begin{prop}
The mapping
\begin{center}
\hspace{-3cm} $\tilde{f}$: Abstract Knotoid Diagrams $\rightarrow$ Knotoid Diagrams in Surfaces of genus $g$ 
\end{center}
defined by assigning to $(F,K)$ the knotoid diagram in the closure $\overline{(F,K)}$ induces a bijection
\begin{center}
$\tilde{f}_*$: Abstract Knotoid Diagrams/abstract eqv. $\rightarrow$ Knotoid Diagrams in Surfaces/stable eqv.
\end{center}
\end{prop}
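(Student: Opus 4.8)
The plan is to mimic the proof of the corresponding correspondence for virtual links (the Kamada--Kamada and Carter--Kamada--Saito results, \cite{KAKA,CKS}), adapted to the open-ended setting by carrying the two endpoint-disks along inertly. There are three things to establish: that $\tilde f_*$ is well defined, that it is surjective, and that it is injective. For well-definedness I would check that each abstract $\Omega$-move ($A\Omega_1$, $A\Omega_2$, $A\Omega_3$) and the abstract detour move, performed on $(F,K)$, produces a surface representation $\overline{(F',K')}$ that is stably equivalent to $\overline{(F,K)}$. Each abstract $\Omega$-move is supported in a single ribbon region (a disk), so after capping the boundary components with $2$-disks it becomes an honest $\Omega$-move of the knotoid diagram inside a disk of the closed surface; such a move is realized by an ambient isotopy of the surface together with, at most, the addition or deletion of an empty handle in the complement when the number of boundary components $\delta$ changes. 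This is exactly where stabilization enters, and why the target of $\tilde f$ is taken only up to stable equivalence. The abstract detour move, being the free motion of non-interacting ribbon bands, becomes an isotopy of the knotoid diagram across the corresponding handle(s), again a stable equivalence. Hence abstractly equivalent abstract diagrams have stably equivalent closures, and $\tilde f_*$ is well defined.

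For surjectivity, given a knotoid diagram $K$ in a closed connected orientable surface $\Sigma_{\tilde g}$, I take a regular neighborhood $N(K)$; this is a ribbon surface containing $K$ with two endpoint-disks, hence an abstract knotoid diagram $(N(K),K)$ in the sense of Section 3.1, and $\tilde f(N(K),K)=\overline{(N(K),K)}$. Since $N(K)$ is unchanged, the surface $\Sigma_{\tilde g}$ is obtained from $\overline{(N(K),K)}$ by adding empty handles in the complement of $K$ (together with a diffeomorphism), so $(\Sigma_{\tilde g},K)$ and $\overline{(N(K),K)}$ are stably equivalent. Thus the class of $(\Sigma_{\tilde g},K)$ lies in the image of $\tilde f_*$.

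The heart of the argument, and the step I expect to be the main obstacle, is injectivity. Suppose $(F_1,K_1)$ and $(F_2,K_2)$ are abstract knotoid diagrams whose closures are stably equivalent. I would factor the stable equivalence into elementary steps: an isotopy of the knotoid diagram within a fixed surface, a diffeomorphism of the surface, and the addition or deletion of an empty handle in the complement of the diagram. A diffeomorphism carries a regular neighborhood to a regular neighborhood, so it is invisible at the abstract level; adding or deleting a handle disjoint from $K$ leaves $N(K)$ untouched, so it is invisible as well. The only substantive case is an isotopy of $K$ inside a surface, which I would decompose into moves supported in a disk --- these are precisely the $\Omega$-moves of the knotoid diagram, including motions of the endpoints that do not cross any strand (the forbidden moves never arise, since an isotopy in the surface never alters crossing data) --- and moves that push an arc of $K$ over a handle of the surface, which are exactly detour-type moves. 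Lifting each of these to the regular neighborhood gives abstract $\Omega$-moves and the abstract detour move, respectively. Finally, since the regular neighborhood of $K_i$ in $\overline{(F_i,K_i)}$ is ambient-isotopic to $(F_i,K_i)$ itself --- the ribbon surface $(F_i,K_i)$ is a regular neighborhood of the underlying graph $\Gamma(K_i)$, which is a deformation retract of it, as noted above --- we conclude that $(F_1,K_1)$ and $(F_2,K_2)$ are abstractly equivalent, so $\tilde f_*$ is injective. The delicate points to handle with care are the precise decomposition of a surface isotopy ``across a handle'' into detour moves and the bookkeeping of boundary components under moves that change $\delta$; both are treated exactly as in the closed virtual case \cite{KAKA,CKS}.
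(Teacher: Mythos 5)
Your proposal is correct and follows essentially the same route as the paper's proof: well-definedness by checking that each abstract $\Omega$-move becomes an $\Omega$-move on the capped surface, with a handle stabilization entering exactly when the number of boundary components changes (the $A\Omega_2$ case), surjectivity via the regular neighborhood $(N(K),K)$, and injectivity by observing that handle moves in the complement leave $N(K)$ untouched while moves of $K$ in the surface lift to abstract moves on it. Your write-up is merely more explicit than the paper's (factoring the stable equivalence into elementary steps and treating the detour move and endpoint motions separately), but the underlying argument is the same.
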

\begin{proof}
 It is easy to see that by filling the boundary components of an abstract knotoid diagram with $2$-disks, abstract $\Omega_1$ and $\Omega_3$-moves are transformed to $\Omega_1$- and $\Omega_3$-moves between the knotoid diagrams represented in the resulting surfaces of the same genus. The genus does not change under these two moves. An abstract $\Omega_2$-move may increase/decrease the genus of the surface by $1$. In the case of a change in the genus, an abstract $\Omega_2$-move corresponds to $\Omega_2$-move plus removal/addition of empty handles in the surface. Thus the map $\tilde{f}_*$ is well-defined. 

 Let $K$ be a knotoid diagram in a surface $\Sigma_g$. The regular neighborhood of the diagram in $\Sigma_g$ is an abstract knotoid diagram $(N(K),K)$. The closure of $(N(K),K)$ with $2$-disks, $\overline{(N(K),K)}$ is stably equivalent to $(\Sigma_g,K)$ since $\overline{(N(K),K)}$ is the least genus surface in which $K$ is given without any virtual crossings. So, the map is surjective.
Addition/removal of handles occur in the complement of $K$ in the surface. Thus $(N(K),K)$ is not affected by these moves. An $\Omega$-move on $K$ transforms to an abstract Reidemeister move on $(N(K),K)$ as the reader can check easily. Therefore the map $\tilde{f}_*$ is injective. This completes the proof of Proposition 3.3.
\end{proof}
 The following theorem is stated in \cite{Tu} as a remark. We give a proof of the theorem.
\begin{thm}
The theory of virtual knotoids is equivalent to the theory of knotoid diagrams in higher genus surfaces considered up to isotopy in the surface, diffeomorphisms of the surface and addition/removal of handles in the complement of knotoid diagrams.
\end{thm}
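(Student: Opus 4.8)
The plan is to deduce the asserted equivalence by concatenating the two bijections already in hand. By Proposition 3.2 the assignment $f_*$ is a bijection from virtual knotoid diagrams modulo the generalized $\Omega$-moves onto abstract knotoid diagrams modulo abstract equivalence, and by Proposition 3.3 the assignment $\tilde f_*$ is a bijection from abstract knotoid diagrams modulo abstract equivalence onto knotoid diagrams in surfaces modulo stable equivalence. Composing, one gets a bijection
\[
\text{Virtual Knotoids}\ \xrightarrow{\ \sim\ }\ \{\text{knotoid diagrams in surfaces}\}/(\text{stable equivalence}),
\]
so the first step is simply to record $\tilde f_*\circ f_*$ and observe that it is compatible with the natural passage from a virtual knotoid diagram $K$ to its surface representation $\overline{(F,K)}$, since both factors are.

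The second step is to unpack ``stable equivalence'' and check that it is verbatim the relation named in the theorem. Definition 3.4 declares $(\Sigma_1,K_1)$ and $(\Sigma_2,K_2)$ stably equivalent exactly when one is obtained from the other by isotopy of the surfaces, diffeomorphisms of the surfaces, and addition/subtraction of empty handles in the complement of the diagrams; this is literally ``isotopy in the surface, diffeomorphisms of the surface and addition/removal of handles in the complement of knotoid diagrams.'' So nothing is needed here beyond pointing to the definition, together with the remark that on a fixed surface this generated relation restricts to $\Omega$-moves plus ambient isotopy, which is the intended notion of a knotoid in that surface.

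The third step is to make sure the target of $\tilde f_*$ really is \emph{all} knotoid diagrams in all closed connected orientable surfaces, not only those of the form $\overline{(F,K)}$. This is the surjectivity argument already inside the proof of Proposition 3.3: for $K$ in a surface $\Sigma_g$, the regular neighborhood $N(K)$ is an abstract knotoid diagram and $\overline{(N(K),K)}$ is the least-genus surface carrying $K$ without virtual crossings, so $(\Sigma_g,K)$ is stably equivalent to an element in the image of $\tilde f_*$; hence at the level of stable equivalence classes no surface representation is missed, and injectivity has likewise been checked there.

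I do not expect a deep obstacle beyond this: the theorem is essentially the concatenation of Propositions 3.2 and 3.3, and the one genuinely substantive point — that an abstract $\Omega_2$-move can change the genus and therefore corresponds, on the closed-surface side, to an ordinary $\Omega_2$-move together with a handle addition or removal in the diagram complement, with this correspondence running in both directions — has already been absorbed into the proof of Proposition 3.3. Accordingly I would present the proof as: compose $f_*$ of Proposition 3.2 with $\tilde f_*$ of Proposition 3.3; the resulting bijection is the claimed equivalence, because stable equivalence of surface representations is, by Definition 3.4, exactly the relation generated by surface isotopy, diffeomorphism, and handle addition/removal in the complement of the diagram.
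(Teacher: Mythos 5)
Your proposal is correct and is essentially the paper's own argument: the paper proves this theorem precisely by composing the bijections $f_*$ and $\tilde f_*$ from the two preceding propositions and identifying stable equivalence with the relation in the statement. The extra details you supply (surjectivity onto all surface diagrams, the genus-changing $\Omega_2$-move) are exactly the points already absorbed into the proofs of those propositions, so nothing further is needed.
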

\begin{proof}
 The composition of the two bijections $f_*$ and $\tilde{f}_*$ gives a bijection between the virtual knotoids and knotoids in higher genus surfaces up to the stable equivalence.
\end{proof}
 Projecting a knotoid diagram that lies in a higher genus surface to $S^2$ results in virtual crossings. We make this projection canonical by forming the abstract knotoid diagram in the surface and then arranging a standard projection of the abstract diagram. Figure \ref{fig:projection} depicts the projection process.

 The \textit{genus} of a knotoid is the least genus among the surfaces in which the knotoid can be immersed without any virtual crossings. Virtual knotoids that can be represented by a classical knotoid diagram are called \textit{genus $0$- knotoids}. At the time of writing this paper we do not know if the theory of knotoids in $S^2$ embeds into the theory of virtual knotoids. We conjecture the following.
\begin{conj}
If two knotoid diagrams in $S^2$ are virtually equivalent then they are equivalent to each other by finitely many $\Omega$- moves and isotopy of $S^2$.
\end{conj}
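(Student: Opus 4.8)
The plan is to adapt the minimal-genus argument that G.~Kuperberg used for virtual links to the knotoid setting. By Theorem 3.4, together with Propositions 3.2 and 3.3, a virtual knotoid is the same datum as a knotoid diagram in a closed connected orientable surface, taken up to isotopy of the surface, diffeomorphism, and addition/removal of handles in the complement of the diagram. Under this dictionary a knotoid diagram $K$ in $S^2=\Sigma_0$ corresponds to a genus~$0$ surface representative. Hence the conjecture is equivalent to the following: if two genus~$0$ surface representatives lie in the same stable equivalence class, then they are already related by moves supported entirely on $S^2$, i.e.\ by $\Omega$-moves and isotopy of $S^2$. The crux is therefore a \emph{uniqueness statement for the minimal genus representative}: the stable equivalence class of an abstract knotoid diagram $(F,K)$ should contain, up to isotopy and diffeomorphism, a single representative of least genus, to which every member of the class can be reduced by destabilizations alone, with no intermediate increase of genus.

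First I would pass to the ribbon model, replacing each surface representative by its regular neighborhood $(N(K),K)$, so that the only nontrivial surface data is the complement $\overline{(N(K),K)}\setminus N(K)$, a disjoint union of surfaces with boundary carrying the empty handles. A stable equivalence is then a finite alternation of abstract $\Omega$- and detour moves with handle attachments and detachments in this complement. The first reduction is a \emph{normalization}: push all handle attachments to the beginning of the sequence and all detachments to the end, commuting them past the $\Omega$- and detour moves. This is possible because those moves are supported in small discs and the ribbons carrying virtual crossings do not interact with empty handles; it rewrites any stable equivalence as ``stabilize, then apply $\Omega$/detour moves, then destabilize.'' The heart of the argument, and the step I expect to be the main obstacle, is then a Kuperberg-type incompressibility lemma: a compressing disc for a once-stabilized surface can be isotoped off the diagram $K$, producing an essentially canonical destabilization, so that a genus~$0$ representative is pinned down uniquely within its stable class. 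For closed links this rests on an innermost-disc analysis of how two minimal surfaces supporting the same link must interact; carrying it over to knotoids forces one to control the two endpoint discs of $(N(K),K)$ and the arc joining tail to head, verifying that the destabilizing compression can always be arranged disjoint from them, so that no forbidden move $\Phi_{\pm}$ is implicitly invoked near an endpoint. I would attack this with an innermost-curve argument on the intersection of a compressing disc with the relevant part of the surface, exploiting that the endpoints are isolated univalent vertices of the underlying graph $\Gamma(K)$ and can be avoided generically; the genuinely delicate case is a compressing curve that separates the two endpoints, where the surface genus must absorb the obstruction rather than the arc.

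Once uniqueness of the minimal genus representative is in hand, the conjecture follows at once: if $K_1$ and $K_2$ in $S^2$ are virtually equivalent, then both are genus~$0$ members of one stable equivalence class, hence both coincide — up to diffeomorphism and isotopy of $S^2$ — with its unique least-genus representative, so $K_1$ and $K_2$ are related by $\Omega$-moves and isotopy of $S^2$. As a consistency check I would expect the same argument, with the endpoint bookkeeping suppressed, to reprove the classical embedding of knot-type knotoids (equivalently, long knots) into virtual knotoids; the genuinely new content of the conjecture lies precisely in the endpoint analysis of the incompressibility step.
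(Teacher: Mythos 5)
There is a fundamental mismatch here: the statement you are addressing is Conjecture 3.5 of the paper, not a theorem. The authors explicitly write that at the time of writing they do not know whether the theory of knotoids in $S^2$ embeds into the theory of virtual knotoids; the paper contains no proof for you to be compared against, and your text does not supply one either. What you have written is a research plan, and its two load-bearing steps are both left unproven. First, the ``normalization'' claim --- that any stable equivalence can be rewritten as ``stabilize, then apply abstract $\Omega$/detour moves, then destabilize'' by commuting handle attachments past the moves --- is not a routine commutation: an abstract $\Omega_2$-move can itself change the genus of the closed-up surface (as the paper notes in the proof of Proposition 3.3), so moves and (de)stabilizations interact, and establishing such a monotone reordering is already a substantial result rather than bookkeeping. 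Second, the ``Kuperberg-type incompressibility lemma'' giving uniqueness of the minimal-genus representative for knotoids is exactly the content of the conjecture; you acknowledge it as ``the main obstacle'' and only describe how you would attack it. In particular, the genuinely new difficulty you correctly identify --- controlling compressions near the two endpoint discs and the tail--head arc without implicitly invoking the forbidden moves $\Phi_{\pm}$, especially for compressing curves separating the endpoints --- is precisely where the known proofs for closed virtual links (which rely on the link being closed) do not transfer, and nothing in your sketch resolves it.

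So the proposal, while a reasonable outline of a plausible line of attack (and consistent with how one would hope to generalize the classical result that classical knot theory embeds into virtual knot theory), has a genuine gap: it reduces the conjecture to an unproven uniqueness/incompressibility statement that is essentially equivalent to the conjecture itself, plus an unjustified reordering of stabilizations. As written it establishes nothing beyond the reformulation via Theorem 3.4, which the paper already provides.
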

 Note that any virtual knotoid invariant is also an invariant for classical knotoids since the generalized $\Omega$- moves include the $\Omega$- moves.
\begin{rem}
\normalfont
The definitions we have used so far, and Theorem 3.2 directly generalizes to virtual multi-knotoid diagrams. A \textit{virtual multi-knotoid diagram} is an immersion of finitely many oriented circles and oriented unit intervals into $S^2$ with finitely many transversal double points that correspond to classical and virtual crossings. The virtual equivalence defined for virtual knotoids generalizes to an equivalence on virtual multi-knotoids in the obvious way.
\end{rem}
\begin{figure}[H]
 \begin{center}
   \begin{tabular}{c}
     \centering  \scalebox{0.75}{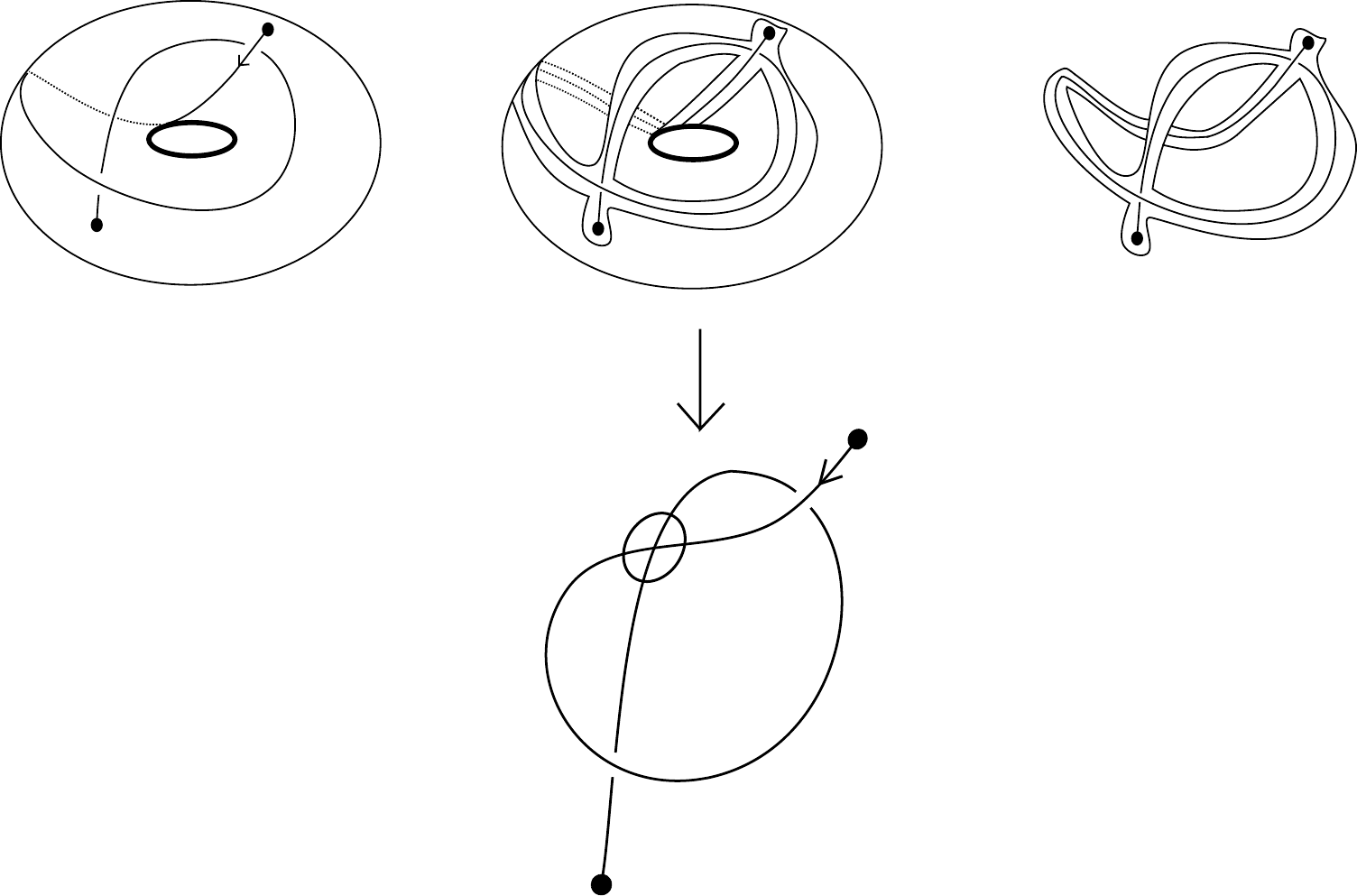}
     \end{tabular}
     \caption{\bf Virtual knotoid and abstract knotoid diagram  }
     \label{fig:projection}
\end{center}
\end{figure}
\subsection{Flat Knotoids}
\normalfont
 A \textit{flat knotoid diagram} is a diagram in $S^2$ (or in $\mathbb{R}^2$) with \textit{flat crossings} consisting in the transversal intersections of strands without any under/over-crossing information and two endpoints that are distinct from each other and from any other crossings. These endpoints are named in the same way with endpoints of knotoids, as the \textit{tail} and the \textit{head} of the diagram. \textit{Flat $\Omega_1$,$\Omega_2$,$\Omega_3$- moves} are defined on flat knotoid diagrams by ignoring the under/over- crossing information at the crossings of the move patterns $\Omega_1$, $\Omega_2$ and $\Omega_3$, respectively. These moves are referred as \textit{flat $\Omega$}-moves. The flat $\Omega$-moves and isotopy of $S^2$ (or isotopy of $\mathbb{R}^2$, respectively) induces an equivalence relation on flat knotoid diagrams, called $f$-equivalence. A \textit{flat knotoid} is defined to be an equivalence class of flat knotoid diagrams with respect to the $f$-equivalence. The analogs of $\Phi_{\pm}$-moves where the adjacent strand to the tail or the head passes through a transversal strand so that it creates/removes a flat crossing, remain forbidden for flat knotoids. 

  A \textit{flat virtual knotoid diagram} is defined to be a flat knotoid diagram with also \textit{virtual crossings} as we have described them. The detour move is defined in the same way as it is defined for virtual knotoid diagrams. The rules for changing flat crossings among themselves are identical with the rules for changing virtual crossings. A special case of the detour move, a \textit{flat partial virtual move} is available for virtual crossings with respect to flat crossings when classical crossings in the partial virtual moves are replaced by flat crossings. The moves obtained by replacing classical crossings in the forbidden moves given in Figure \ref{fig:forbid} by flat crossings, remain forbidden for flat virtual knotoid diagrams. The moves on flat knotoid diagrams that are generated by flat $\Omega$-moves and the detour move, are called  \textit{generalized flat $\Omega$- moves.}
  
 Two flat virtual knotoid diagrams are said to be \textit{f-equivalent} if there is a finite sequence of generalized flat $\Omega$- moves and isotopy of $S^2$ taking one diagram to the other. A \textit{flat virtual knotoid} is defined to be an equivalence class of flat knotoids diagrams with respect to this equivalence. 

 We say that a virtual knotoid diagram $K$ \textit{overlies} a flat virtual knotoid diagram if it is obtained from the flat diagram by choosing a crossing type as over or under for each flat crossing. The flat virtual diagram that $K$ overlies, is the \textit{underlying flat diagram} of $K$ and denoted by $F(K)$. It is clear that any generalized $\Omega$-move on $K$ induces a flat generalized $\Omega$-move on the underlying flat diagram $F(K)$. It follows that  if $K$ and $\widehat{K}$ are two virtually equivalent virtual knotoid diagrams then the underlying flat diagrams, $F(K)$ and $F(\widehat{K})$ are f-equivalent. Thus, a virtual knotoid diagram is necessarily nontrivial if it overlies a nontrivial flat virtual knotoid. Clearly, this argument holds for flat knotoid diagrams in $S^2$ or in $\mathbb{R}^2$. A classical knotoid diagram is nontrivial if it overlies a montrivial flat knotoid diagram. 

 It is well-known that any flat classical knot diagram is equivalent to the trivial knot diagram. This property of flat classical knots generalizes to flat knotoid diagrams in $S^2$ as explained below.
\begin{prop}
Any flat knotoid in $S^2$ is f-equivalent to the trivial knotoid.
\end{prop}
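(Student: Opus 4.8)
The plan is to reduce the problem to the classical fact that every flat knot diagram in $S^2$ is trivial, by first pushing all crossings into a single "connected-sum region" away from the endpoints. Observe that a flat knotoid diagram in $S^2$ is the same data as a flat long knot together with a choice of how the two ends escape to the rest of the sphere; the only obstruction to triviality could come from the two endpoints sitting in different regions. So the first step is: given a flat knotoid diagram $K$ in $S^2$, use flat $\Omega$-moves and isotopy of $S^2$ to slide the head of $K$ along the strand emanating from it, across crossings one at a time. The key point is that while the $\Phi_\pm$-type move (where the strand \emph{adjacent} to an endpoint is pushed across a transversal strand) is forbidden, pushing the endpoint \emph{along its own strand} past a crossing that lies further down the diagram is allowed: near such a crossing the endpoint arc is one of the four strands but not the one being isotoped over/under, so this is a legitimate isotopy of $S^2$ combined with a flat $\Omega_2$ move creating and then destroying a small bigon. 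I would make this precise by induction on the number of crossings separating the head from the tail along the arc.

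Next, having used such slides to bring the head adjacent to the tail — i.e. so that both endpoints lie in a common region of the diagram — the flat knotoid diagram becomes (a diagram representing) a flat \emph{knot-type} knotoid, equivalently a flat $1$-$1$ tangle whose closure is a flat knot diagram in $S^2$. At this stage I invoke the classical result cited just before the Proposition: every flat knot diagram in $S^2$ is $f$-equivalent to the trivial (crossingless) one. The standard argument for that classical fact — repeatedly finding an innermost bigon or an $\Omega_1$-curl and removing it, or more conceptually using that the flat knot group / Gauss code of any flat knot in $S^2$ carries no information because over/under data is absent — works verbatim once the endpoints are in a single region, because all flat $\Omega$-moves needed in that reduction can be performed in the complement of a small disk containing both endpoints. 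Hence $K$ is $f$-equivalent to a flat knotoid diagram with no crossings, which is the trivial knotoid.

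The main obstacle, and the step that needs the most care, is the first one: verifying that the endpoint can always be slid past the "next" crossing along its strand using only legitimate moves, never secretly invoking a forbidden $\Phi_\pm$ move. The subtlety is that as one drags the head toward the tail, the arc being dragged may wrap around and the crossing one wants to pass could be one where the dragged arc is itself the transversal strand — precisely the forbidden configuration. I would handle this by choosing the order of crossings to eliminate carefully: work with the region structure on $S^2$ and always move the endpoint into an \emph{adjacent} region across an edge of the underlying graph that is \emph{not} incident to a forbidden-type obstruction, using the fact that on $S^2$ (unlike $\mathbb{R}^2$) there is no distinguished unbounded region, so one has the extra freedom of isotopies of $S^2$ that "rotate through infinity." Concretely, I expect to argue that among the regions adjacent to the one containing the head there is always at least one that is closer (in the graph-distance sense) to the region containing the tail and reachable by a good move, and then induct on that distance; the spherical topology is what guarantees such a region exists, which is exactly why the analogous statement fails in $\mathbb{R}^2$ (cf. Figure~\ref{fig:knotoid}(b)).
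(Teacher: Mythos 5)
There is a genuine gap, and it is in your first step. Sliding the head back along its own strand past the next crossing removes a flat crossing between the endpoint-adjacent arc and a transversal strand: this is exactly the flat analogue of the forbidden moves $\Phi_\pm$, which the paper explicitly excludes from $f$-equivalence ("the analogs of $\Phi_{\pm}$-moves \ldots remain forbidden for flat knotoids"). Your justification --- that near such a crossing "the endpoint arc is one of the four strands but not the one being isotoped," so the slide is an isotopy of $S^2$ plus a flat $\Omega_2$ bigon creation/destruction --- does not hold: an isotopy of $S^2$ cannot change the combinatorics of the diagram, and all flat $\Omega$-moves take place in disks disjoint from the endpoints, so no combination of them changes the number of crossings on the endpoint-adjacent arc in the way your slide requires. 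What is forbidden is any passage of an endpoint across a transversal strand, not merely the case where the dragged arc is the transversal one. If such endpoint slides were legitimate local moves, they would equally trivialize flat knotoids in $\mathbb{R}^2$ (where the statement is false, cf.\ Figure \ref{fig:knotoid}(b)) and would make the height invariant vacuous. The extra freedom that $S^2$ provides is of a different nature: one may isotope \emph{strands} across the point at infinity, around the back of the sphere and away from the endpoints, never moving an endpoint across a strand; your sketched induction on region-distance never explains how the head actually crosses an edge of the underlying graph without a forbidden move, so the reduction to the knot-type case is not established. (Your second step, invoking triviality of flat knots once both endpoints share a region, is comparatively unproblematic, but it rests entirely on the unproved first step.)

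For comparison, the paper's proof avoids moving endpoints across strands altogether: it first normalizes the diagram so the tail lies in the outermost region (an isotopy of $S^2$), replaces the flat diagram by the ascending diagram overlying it, lifts that ascending diagram to a space curve with endpoints on the two vertical lines, and shows by an explicit unwinding (the part winding around the head's line is a $2$-braid word $\sigma_1^n$, removed by rotations about that line) that the curve is line isotopic to the trivial curve; Theorem 2.2 then converts this line isotopy into a sequence of $\Omega$-moves and planar isotopies. If you want to salvage your outline, you would need to replace the endpoint-sliding step by an argument of this kind, in which only strands are moved (possibly through infinity) and the endpoints stay put.
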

\begin{proof}
Any knotoid diagram in $S^2$ is equivalent to a \textit{normal knotoid diagram} that lies in $\mathbb{R}^2$ with its tail lying in the outermost region (in the unbounded region of the plane) of the diagram. This equivalence is obtained by an isotopy of ${S^2}={\mathbb{R}^2} \cup {\infty}$ \cite{Tu}. A flat knotoid diagram in $\mathbb{R}^2$ is said to be \textit{normal} if its tail in the outermost region of the diagram. Similarly with the argument above, any flat knotoid diagram in $S^2$ can be represented by a flat normal knotoid diagram. It is clear that two flat normal knotoid diagrams represent the same flat knotoid in $S^2$ if and only if they are related to each other by a finite sequence of flat $\Omega$-moves and planar isotopy.

 An \textit{ascending knotoid diagram} is a classical knotoid diagram that consists of crossings encountered firstly as an undercrossing while traversing the diagram from its tail to its head. Clearly, a flat normal knotoid diagram is f-equivalent to the trivial knotoid diagram if and only the ascending normal knotoid diagram overlying this flat diagram  is equivalent to the trivial knotoid diagram. We claim that any ascending normal knotoid diagram is equivalent to the trivial knotoid diagram. To prove our claim,  we first show that any open-ended space curve corresponding to a normal ascending knotoid diagram, is line isotopic to the trivial space curve with two endpoints attached to the special lines. Then by Theorem 2.2, it follows that an ascending normal knotoid diagram represents the trivial knotoid in $\mathbb{R}^2$, so in $S^2$. 
 
 Let $K$ be an ascending normal knotoid diagram. Let $l_1$ and $l_2$ be the two lines that are passing through the tail and the head, respectively, and perpendicular to the $xy$-plane. We fix the tail at the point $(x,y,0)$ (on the plane) on $l_1$ and start raising $K$ in the vertical direction by pulling the head up along the line $l_2$. The head is pulled up until $K$ becomes a helical space curve $c(K)$. See Figure \ref{fig:flt} for an illustration of this. 
 Notice that the curve $c(K)$ can be isotoped to a curve that does not wind around the line $l_1$ since $K$ is a normal diagram. 
Then the curve $c(K)$ is line isotopic to a curve with one endpoint on the line $l_1$ and the rest winds around $l_2$,  where the other endpoint is attached. The part of the curve $c(K)$ that winds around the line $l_2$ together with the line $l_2$ that is oriented upwards, can be regarded as a $2$-braid. By the line isotopy the parts that correspond to a braid word $\sigma_1\sigma_1^{-1}$, are eliminated so that the curve $c(K)$ corresponds to a braid word $\sigma_1^n \in B_2$, for some $n \geq 0$. We start unwinding $c(K)$ from the top by a rotation of $180$-degrees in the counterclockwise direction around the line $l_2$. Applying $n$ consecutive rotations around the line $l_2$ transforms the curve $c(K)$ into the trivial curve. In other words, the curve $c(K)$ is line isotopic to the trivial curve. Then the projection of $c(K)$ to $\mathbb{R}^2$ is the trivial knotoid diagram by Theorem 2.2. This proves that any ascending normal knotoid diagram is equivalent to the trivial knotoid. Therefore, by the argument above, any flat normal knotoid diagram is $f$-equivalent to the trivial knotoid diagram. Since any flat knotoid diagram in $S^2$ can be represented by a flat normal diagram, the statement follows. 
\end{proof}
\begin{figure}[H]
 \begin{center}
   \begin{tabular}{c}
     \centering  \scalebox{0.60}{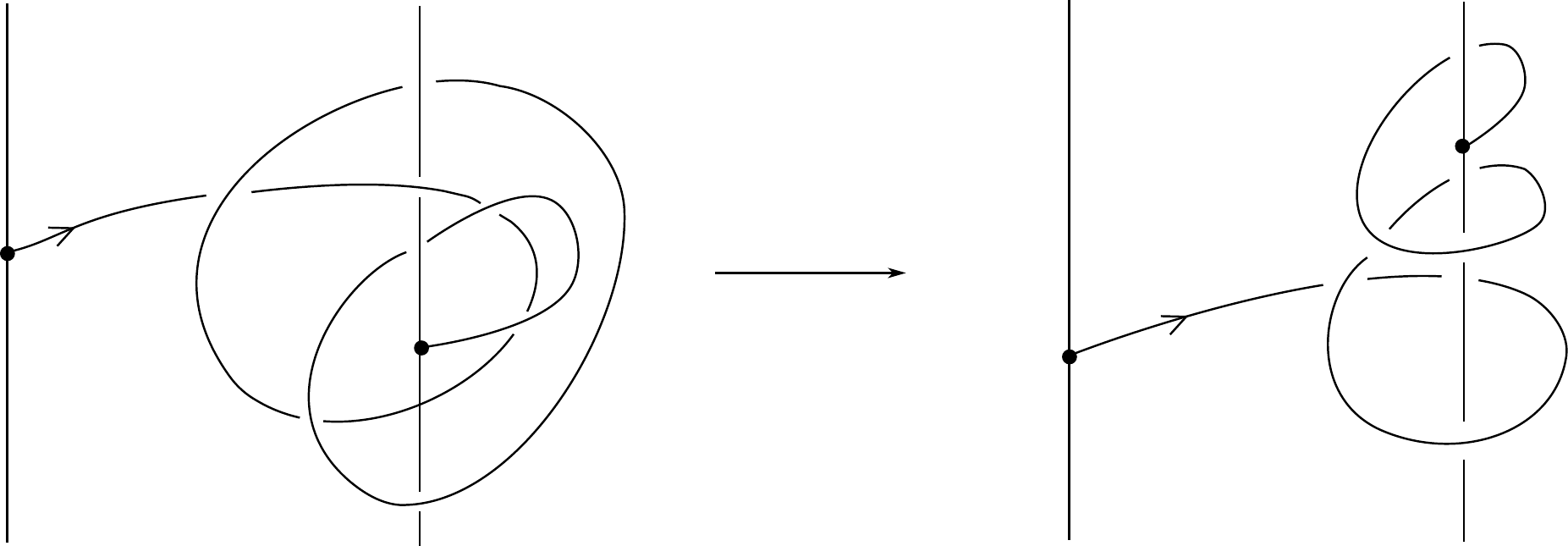}
     \end{tabular}
     \caption{\bf An ascending knotoid diagram in $\mathbb{R}^3$}
     \label{fig:flt}
\end{center}
\end{figure}

 Note that Proposition 3.6 does not hold for flat knotoids in $\mathbb{R}^2$. The ascending knotoid diagram $K$ given in Figure 1(b), when considered to be defined in the plane, is not equivalent to the trivial knotoid \cite{Tu}. It follows that the underlying flat diagram of the diagram $K$ is not $f$-equivalent to the trivial knotoid diagram. Also, unlike the flat knotoids in $S^2$, there are flat virtual knotoids which are non-trivial. In section 4.2 we discuss on an example considering Figure \ref{fig:trivialvc} whose underlying flat diagram is a nontrivial flat virtual knotoid. 

 \subsection{The Virtual Closure of Knotoids}
Every knotoid diagram in $S^2$ represents a virtual knot as pointed out in \cite{Tu}. The endpoints of a knotoid diagram can be connected with an embedded arc in $S^2$ but this time a virtual crossing is created every time the connection arc crosses a strand of the diagram, as depicted in Figure \ref{fig:vc}. The resulting virtual knot diagram can be represented in a torus by attaching a $1$-handle to $S^2$ which holds the connection arc. Connecting the endpoints of a knotoid diagram in $S^2$ in the virtual fashion induces a well-defined map from the set of classical knots to the set of virtual knots of genus at most $1$. This map is called the \textit{virtual closure map} and is denoted by $\overline{v}$,
\begin{center}
$\overline{v}$: Knotoids in $S^2$ $\rightarrow$ Virtual knots of genus $\leq$ $1$. 
\end{center}
\begin{figure}[H]
     \begin{tabular}{c}
     \centering  \scalebox{0.5}{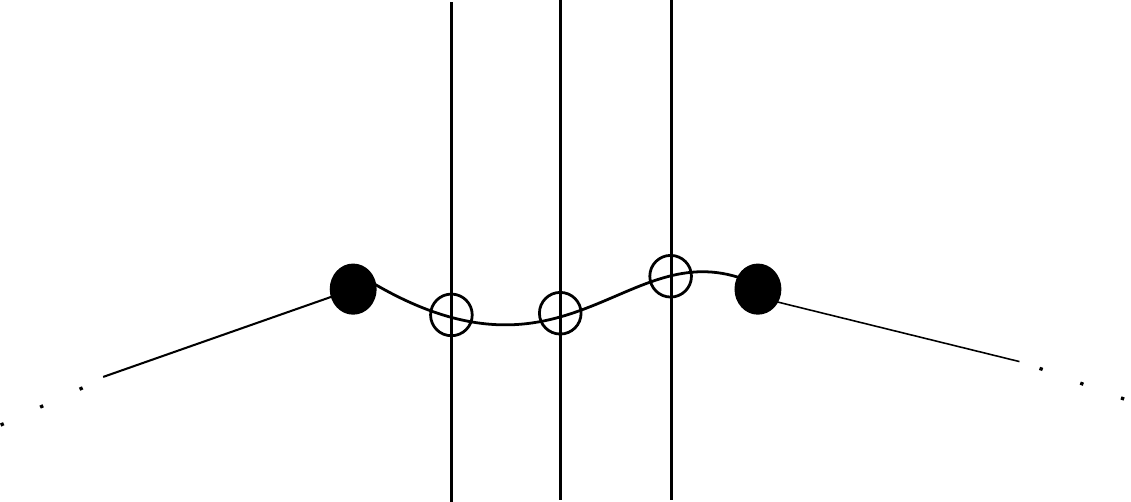}
     \end{tabular}
     \caption{\bf The virtual closure of a knotoid diagram}
     \label{fig:vc}
\end{figure}
 
 The connection arc is unique up to isotopy of $S^2$. The isotopy between any two connection arcs induces detour moves between the corresponding virtual knot diagrams. So, the choice of a connection arc does not alter the isotopy class of the resulting virtual knot. Also, it is clear that an $\Omega$-move on a knotoid diagram is transformed to a combination of generalized Reidemeister moves on the resulting knot diagram. Therefore, the virtual closure map is a well-defined map. The virtual knot assigned to a knotoid $K$ in $S^2$ via the virtual closure map is called the \textit{virtual closure} of $K$, and is denoted by $\overline{v}(K)$.

 A knot-type knotoid diagram becomes a classical knot diagram when the endpoints are connected virtually. In fact the virtual closure of a knot-type knotoid is a classical knot. Note that the underpass closure and the virtual closure of a knot-type knotoid are the same classical knots. The virtual closure of a proper knotoid diagram is a virtual knot diagram with the virtual crossings that are positioned consecutively over the connection arc. It is natural to ask if the virtual closure of a proper knotoid can be isotopic to a classical knot. At the time of writing this paper, we do not have the answer of this question.

 If a virtual knot is represented by a diagram with virtual crossings consecutively positioned on the same strand, then it is immediate to conclude that this virtual knot is in the image of the virtual closure map. In particular, the knots 2.1,3.2, 4.12, 4.43, 4.65, 4.94, 4.100 listed in \cite{Gr} are genus one virtual knots that are the virtual closures of some knotoids in $S^2$. For a virtual knot represented by a diagram with arbitrarily positioned virtual crossings, we want to know whether this virtual knot is in the image of the virtual closure map. For instance, the knotoid diagram given in Figure \ref{fig:virt3} which is listed as virtual knot 3.1 in \cite{Gr}, is a nontrivial genus one virtual knot. We prove in \cite{GK1} that $k$ is not in the image of the virtual closure map by using an extended version of the bracket polynomial of knots in higher genus surfaces that is introduced in \cite{DK1}. From this, it follows that the virtual closure map is not surjective.

 Figure \ref{fig:vc_ex} shows an example of a pair of knotoid diagrams $K_1$, $K_2$, whose virtual closures are the same virtual knot. It can be seen that the underpass closure of $K_1$ is the trefoil knot, and the underpass closure of $K_2$ is the unknot. As we discussed before, the underpass closure map, $\omega_-$ is a well-defined map on the set of knotoids in $S^2$. Thus, $K_1$ and $K_2$ are two nonequivalent knotoid diagrams , and so, the virtual closure map is not an injective map.
\begin{figure}[H]
     \begin{center}
     \begin{tabular}{c}
     \centering  \scalebox{0.5}{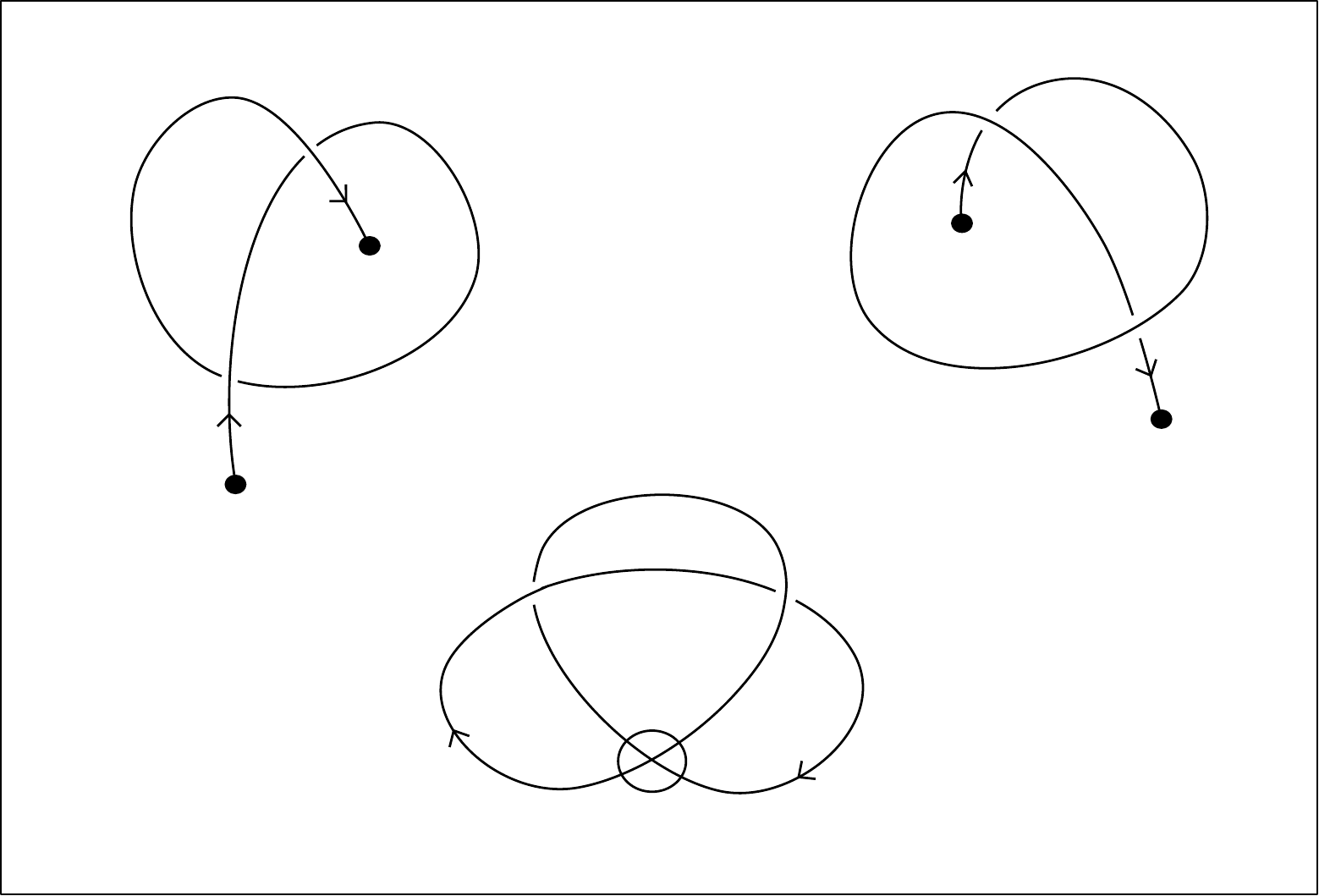}
     \end{tabular}
     \caption{\bf Nonequivalent classical knotoids with the same virtual closure}
     \label{fig:vc_ex}
\end{center}
\end{figure}
\subsection{The bracket polynomial of knotoids}
 The bracket polynomial of knotoids in $S^2$ or $\mathbb{R}^2$\cite{Tu} is defined by extending the state expansion of the bracket polynomial of knots \cite{Ka5, Ka7}. Each classical crossing of a classical knotoid diagram $K$ is smoothed either by \textit{A-} or \textit{B-type smoothing}, as shown in Figure \ref{fig:bracket}. A smoothing site is labeled by $1$ if $A$-smoothing is applied and labeled by $-1$ if $B$-smoothing is applied at a particular crossing. A \textit{state} of the knotoid diagram $K$ is a choice of smoothing each crossing of $K$ with the labels at smoothing sites. Each state of $K$ consists of disjoint embedded circular components and a single long segment component with two endpoints. The initial conditions given in Figure \ref{fig:bracket} are sufficient for the skein computation of the bracket polynomial of classical knotoids. 

 \begin{definition}
The bracket polynomial of a classical knotoid diagram $K$ is defined as
\begin{center}
$<K>=\sum_S A^{\sigma(S)}d^{\|S\|-1}$,
\end{center}
where the sum is taken over all states, $\sigma(S)$ is the sum of the labels of the state $S$, $\|S\|$ is the number of components of $S$, and $d=(-A^2-A^{-2})$.
\end{definition}
\begin{figure}[H]
\begin{center}
     \begin{tabular}{c}
		\Large{
     \centering  \scalebox{0.6}{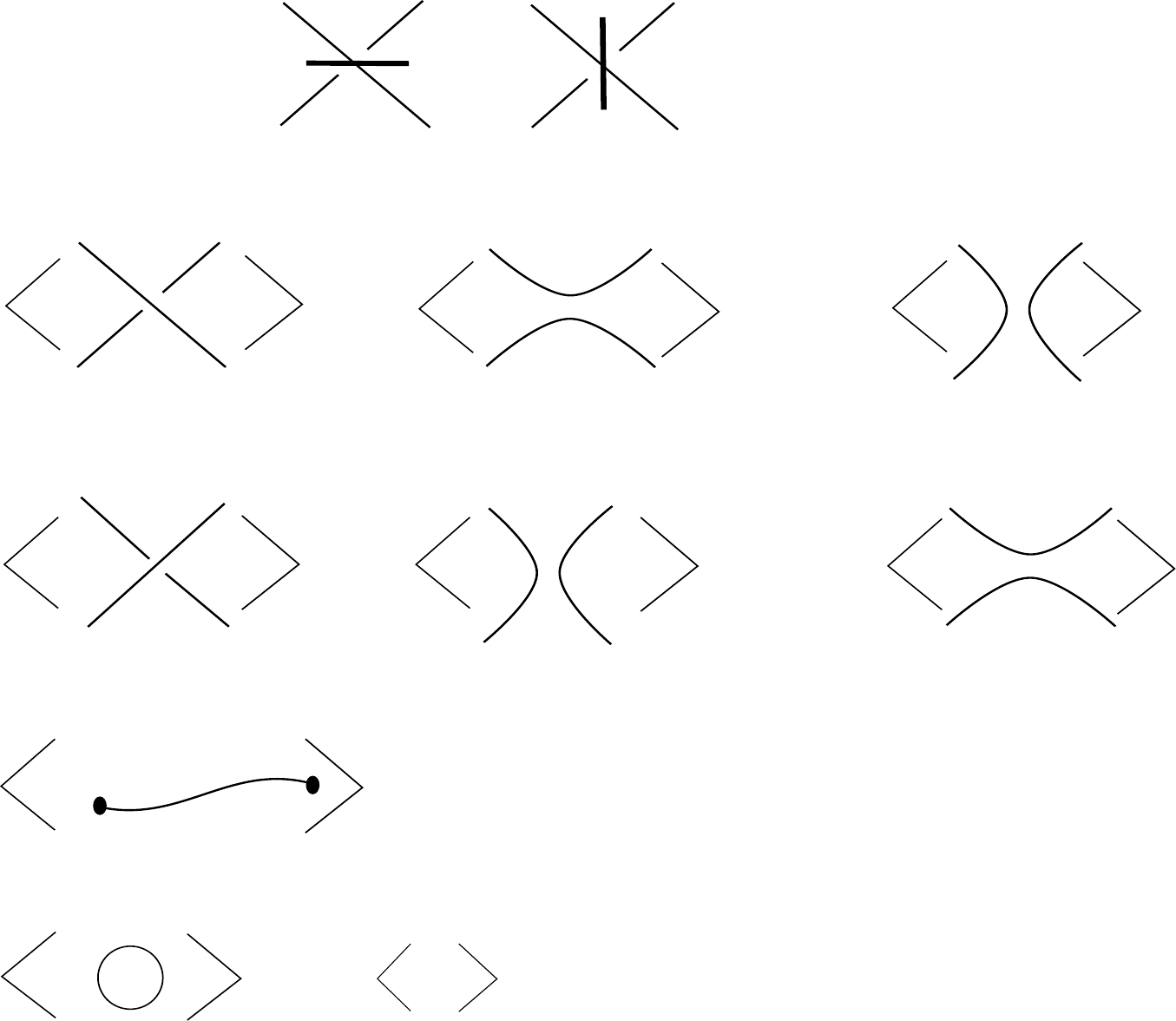}
		}
     \end{tabular}
     \caption{\bf Skein relations of the bracket polynomial}
     \label{fig:bracket}
\end{center}
\end{figure}
 	The \textit{writhe} of a classical or virtual knotoid diagram $K$, $\writhe(K)$ is the number of positive crossings (the classical crossings with sign $+1$) minus the number of negative crossings (the classical crossings with sign $-1$) of $K$. The writhe is invariant under the generalized $\Omega$-moves except that $\Omega_1$-move changes the writhe by $\pm 1$. The bracket polynomial turns into an invariant for classical knotoids with a normalization by the writhe. The \textit{normalized bracket polynomial} of a classical knotoid $K$, $f_K$ is defined as the multiplication, $f_K=(-A^3)^{-wr(K)}<K>$ \cite{Tu}.

 The normalized bracket polynomial of knotoids in $S^2$ generalizes the Jones polynomial of classical knots with the substitution $A=t^{-{1/4}}$ . Note that by connecting the endpoints of the long segment components of states of a knotoid in $S^2$, $K$ by an embedded arc in the virtual fashion, we obtain the bracket state components of the virtual knot $\overline{v}(K)$. This gives us the equality, $V(K)=V(\overline{v}(K))$, where $V(K)$ denotes the Jones polynomial of $K$. The Jones polynomial of the trivial knotoid is trivial.
\begin{figure}[H]
     \begin{center}
     \begin{tabular}{c}
     \centering  \scalebox{0.65}{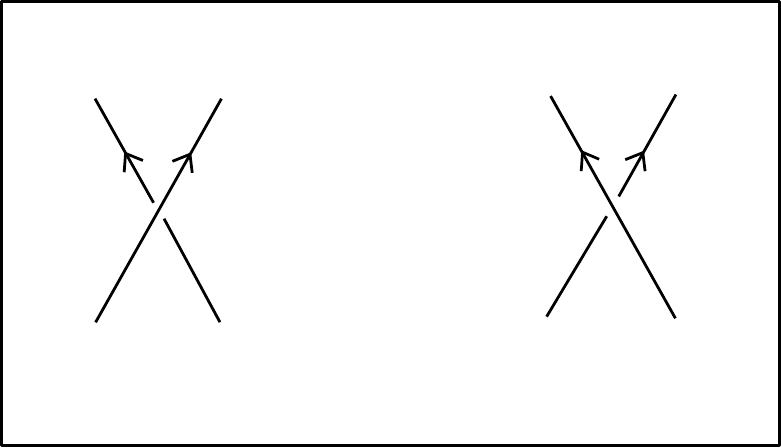}
     \end{tabular}
     \caption{\bf Crossing types}
     \label{fig:cr}
\end{center}
\end{figure}
\begin{example}\normalfont
Let $K_1$ be the knotoid diagram illustrated in Figure \ref{fig:brac}. As we compute in the figure, the bracket polynomial of $K_1$,  $<K_1>=A^2+1-A^{-4}$. This implies that $K_1$ is a non-trivial knotoid.
\end{example}
\begin{figure}[H]
\begin{center}
     \begin{tabular}{c}
     \centering  \scalebox{0.65}{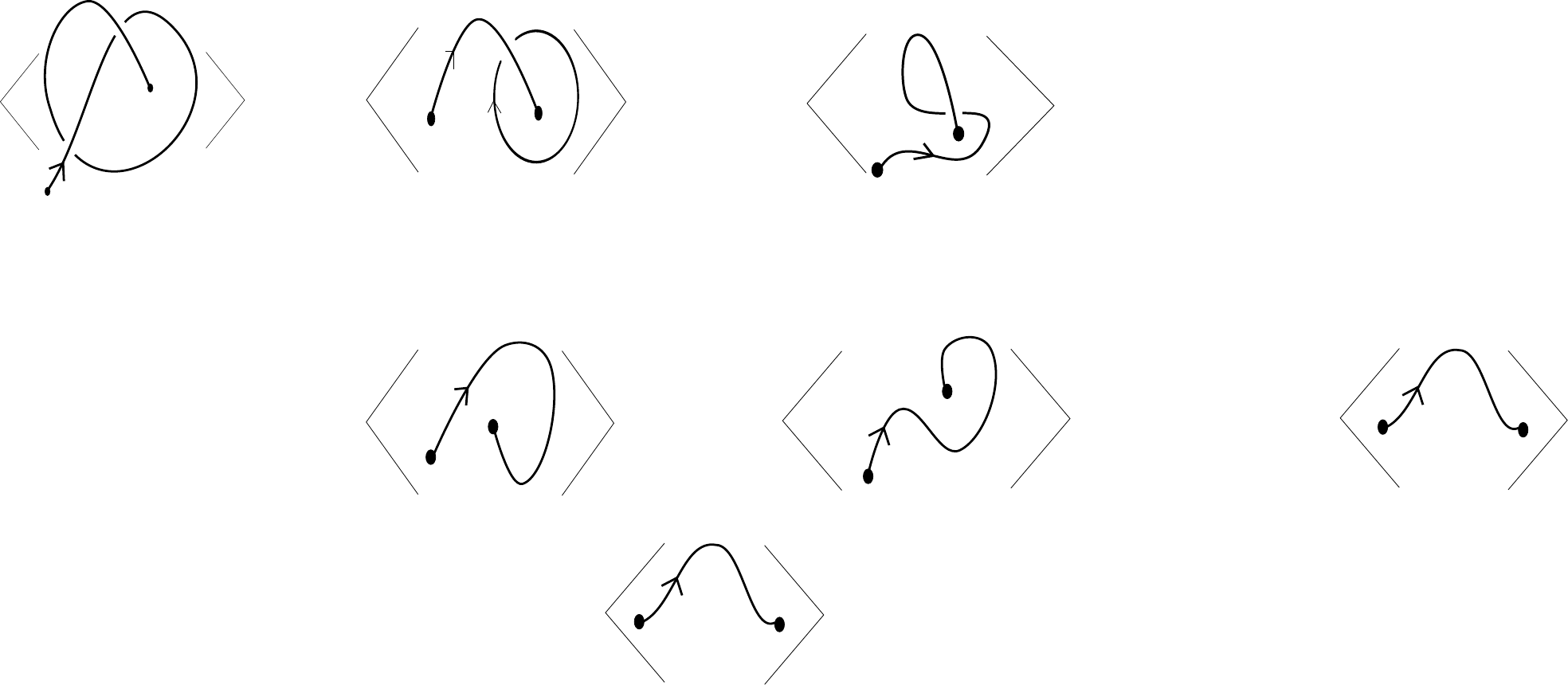}
     \end{tabular}
     \caption{\bf Computation of the bracket polynomial of $K_1$}
     \label{fig:brac}
\end{center}
\end{figure}	
	The well-known Jones polynomial conjecture can be extended to a conjecture for knotoids in $S^2$.
\begin{conj}
The normalized bracket polynomial of knotoids in $S^2$ (or the Jones polynomial) detects the trivial knotoid. 
\end{conj}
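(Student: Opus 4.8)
The plan is to recast the conjecture as a statement about the virtual closure map $\overline{v}$, split it along the knot-type versus proper dichotomy, and separate the part that is literally a famous open problem from the part that is genuinely new.

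\textbf{Step 1: reduce to the virtual closure.} The connection arc used to form $\overline{v}(K)$ meets the rest of the diagram only in virtual crossings, so it contributes nothing to the writhe and $\writhe(\overline{v}(K))=\writhe(K)$. Moreover, as already observed above, closing the long-segment components of the states of $K$ in the virtual fashion produces exactly the states of $\overline{v}(K)$ with matching component counts, so $\langle K\rangle=\langle\overline{v}(K)\rangle$ and, after the common writhe normalization, $f_K=f_{\overline{v}(K)}$ and $V(K)=V(\overline{v}(K))$. Hence $f_K$ equals the value for the trivial knotoid exactly when $V(\overline{v}(K))=1$, and the conjecture is equivalent to the assertion that \emph{the only knotoid in $S^2$ whose virtual closure has trivial Jones polynomial is the trivial knotoid.} Since the trivial knotoid is knot-type and every proper knotoid is nontrivial, this splits into: (a) every nontrivial knot-type knotoid has $V(\overline{v}(K))\neq 1$; and (b) every proper knotoid has $V(\overline{v}(K))\neq 1$.

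\textbf{Step 2: the knot-type half is the classical conjecture.} For a knot-type knotoid $K$ the underpass and overpass closures agree, so $\overline{v}(K)$ is the represented classical knot $\kappa=\omega_-(K)$, and $K=\alpha(\kappa)$ under the bijection $\alpha$ between classical knots and knot-type knotoids. Thus statement (a) says precisely that a classical knot with trivial Jones polynomial is the unknot; after the substitution $A=t^{-1/4}$ this is the classical Jones unknotting conjecture verbatim. So this half of our conjecture is exactly that open problem and no easier, which is why the paper rates the difficulty as it does.

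\textbf{Step 3: the proper half, and the main obstacle.} Statement (b) is the new content, and it predicts a ``Jones polynomial sees the height'' phenomenon, since a proper knotoid has height $\geq 1$ while the trivial knotoid has height $0$. I would pursue two lines. First, categorification: construct a Khovanov-type homology of knotoids — most naturally a cube of resolutions on the long-segment states, or equivalently on their abstract diagrams — whose graded Euler characteristic is $f_K$, and attempt, in the spirit of Kronheimer--Mrowka's unknot-detection theorem, to show it detects the trivial knotoid; but, just as for classical knots, this would only establish the Khovanov-homology refinement and not the polynomial statement. Second, a more elementary route: extract a lower bound on the height directly from $\langle K\rangle$ — the affine index and arrow polynomials of Sections~5 and~6 bound the height, so one would try either to factor those bounds through specializations of the bracket, or to argue directly that in the extreme ($\sigma(S)$ maximal or minimal) states of a proper diagram the presence of the two endpoints in distinct regions forces a nonzero term in $f_K-1$. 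The hard part, and the reason this is still only a conjecture, is twofold: through (a) the full statement cannot be easier than the classical Jones unknotting conjecture, and even (b) in isolation is open, since the plain bracket is not presently known to detect the height (unlike the arrow polynomial). A realistic first milestone is therefore the weaker claim that no proper knotoid has normalized bracket equal to $1$; combined with a resolution of the classical conjecture it would settle the knotoid conjecture, and this decomposition makes precise that the latter is ``the classical Jones conjecture plus one new inequality'' rather than anything strictly larger.
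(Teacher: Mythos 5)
The statement you were asked about is a \emph{conjecture} in the paper (Conjecture 3.7), and the paper offers no proof of it; the authors only remark that it appears to be of the same level of difficulty as the classical Jones unknotting conjecture, record the identity $V(K)=V(\overline{v}(K))$, and note consequences (e.g.\ that the conjecture would imply no proper knotoid has trivial virtual closure, and that a Khovanov-homology refinement is a natural direction, citing Kronheimer--Mrowka). So there is no proof in the paper against which your argument can be measured, and, as you yourself acknowledge, your text is not a proof either: it is a reduction plus a program. Within those limits your reductions are sound and consistent with the paper. Step 1 is exactly the paper's observation that closing the long segment of each state virtually gives the states of $\overline{v}(K)$, so $f_K=f_{\overline{v}(K)}$; Step 2 correctly uses the bijection $\alpha$ and the fact that the virtual (equivalently underpass) closure of a knot-type knotoid is the represented classical knot, so that the knot-type half of the conjecture is verbatim the classical Jones unknotting conjecture; and your split into the knot-type and proper halves matches the way the paper discusses the problem in Remark (4) and in the Discussion section.

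The genuine gap is simply that neither half is established. For (a) you have shown equivalence with a famous open problem, which is a clarification, not progress toward a proof; for (b) you propose two routes (a knotoid Khovanov homology, or extracting a height bound from the bracket itself), but you give no construction, no lemma, and no argument that either route can be carried out --- indeed you note yourself that the plain bracket is not known to bound the height, which is precisely why the arrow and affine index polynomials are introduced in the paper as separate tools. So your submission should be read as an accurate restatement of why the conjecture is hard and of what a resolution would require (``the classical Jones conjecture plus one new inequality''), not as a proof or partial proof; if you want to contribute something checkable, the realistic first target you name --- ruling out proper knotoids with $f_K=1$, or even exhibiting height-detection by some specialization of the bracket --- is where an actual argument would have to begin.
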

\begin{remark}\normalfont
\begin{enumerate}

\item This generalization of the knot detection conjecture for the classical Jones polynomial may have an analogue in Khovanov homology for knotoids. We shall explore this question in a sequel to the present paper. The reader should note that Khovanov homology detects the unknot \cite{KM}.

 \item Another way to show that the virtual knot given in Figure \ref{fig:virt3} is not in the image of the virtual closure map could be the following. The virtual knot given in the figure has trivial Jones polynomial \cite{Ka1}. If this knot were in the image of the virtual closure map then there would be a nontrivial classical knotoid $K$ such that $k=\overline{v}(K)$. The knotoid $K$ would have  unit Jones polynomial since the Jones polynomial of $K$ is equal to the Jones polynomial of its closure, which is the knot $k$. This would contradict Conjecture 3.7.
 
\item  The virtual closure map extends to a well-defined map from the set of virtual knotoids to the set of virtual knots. We call this map \textit{extended virtual closure map}. It is clear that the extended virtual closure map is a surjective map. Any virtual knotoid diagram whose endpoints can be connected by an embedded arc without creating any type of crossings (neither classical or virtual), can be regarded as a long virtual diagram. The virtual knotoid shown in Figure \ref{fig:trivialvc} is a nontrivial virtual knotoid. We show the non-triviality of this knotoid in Section 4.2 by the parity bracket polynomial of knotoids. The reader can verify easily that the extended virtual closure of this virtual knotoid is the trivial knot. Obviously, trivial virtual kntooid diagrams are also sent to the trivial knot by the map. Thus, the extended virtual closure map is not injective.

\item The kernel of the virtual closure map is an interesting structure to explore. A knot-type knotoid can be represented by a diagram with endpoints in the same region of the diagram. Thus if a knot type-knotoid is nontrivial then the virtual closure is classical and nontrivial. If we restrict the virtual closure map to knot-type knotoids then the kernel is trivial. At the time of writing this paper we do not know the answer of the following question. \textit{Does there exist a proper knotoid whose virtual closure is the trivial knot?} If conjecture 3.7 holds then we can answer this question as follows. The Jones polynomial of a classical knotoid is the same as the Jones polynomial of the virtual closure of the knotoid. The conjecture implies that there does not exist any proper knotoid whose virtual closure is the trivial knot. 

\item  The underpass closure map can not be extended to a well-defined map on virtual knotoids. Figure \ref{fig:different} depicts a virtual knotoid diagram that represents two virtual knots via the underpass closure map.  The arrow polynomial which will be discussed in Section 5, detects that these knots are not equivalent. In fact, to transform one diagram to the other one, we require the virtual forbidden move, $\Phi_{under}$ which is shown in Figure \ref{fig:forbid}.
\end{enumerate}
	\end{remark}
\begin{figure}[H]
\begin{center}
     \begin{tabular}{c}
     \centering  \scalebox{0.25}{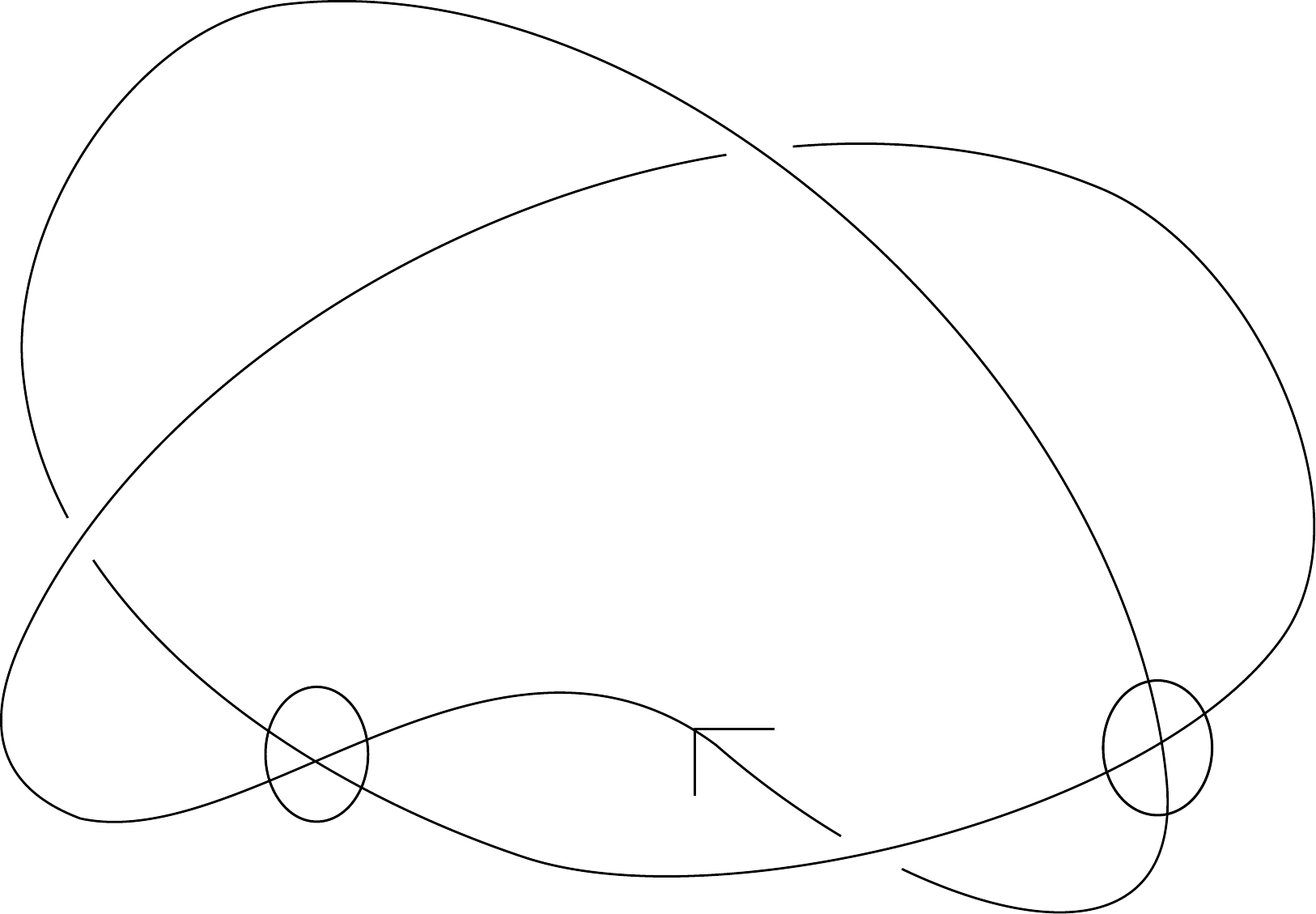}
     \end{tabular}
     \caption{\bf The virtual knot 3.1}
     \label{fig:virt3}
\end{center}
\end{figure}	
 \begin{figure}[H]
\begin{center}
     \begin{tabular}{c}
     \centering  \scalebox{0.20}{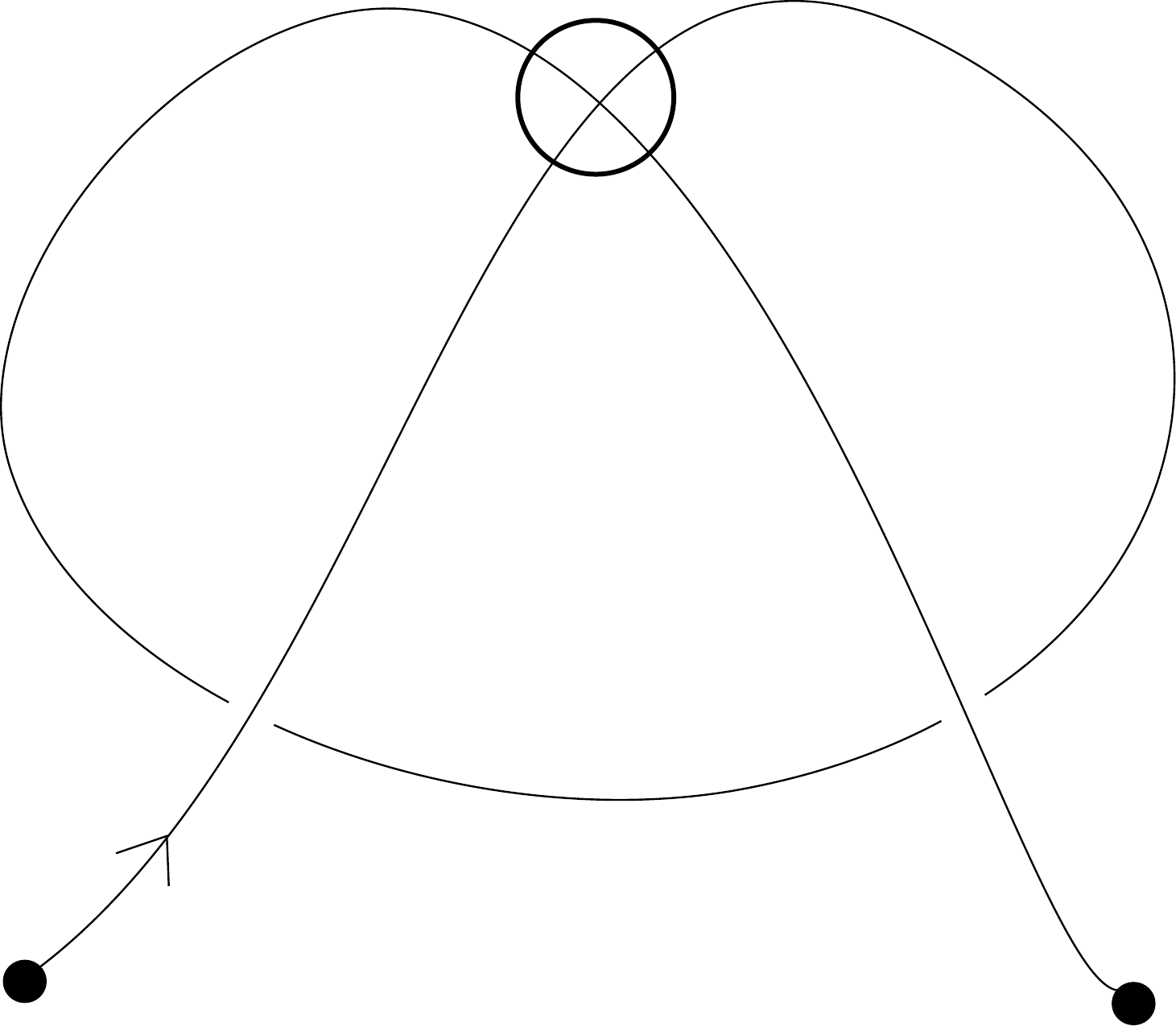}
     \end{tabular}
     \caption{\bf A nontrivial virtual knotoid with trivial virtual closure}
     \label{fig:trivialvc}
\end{center}
\end{figure}	
 \begin{figure}[H]
\begin{center}
     \begin{tabular}{c}
     \centering  \scalebox{0.55}{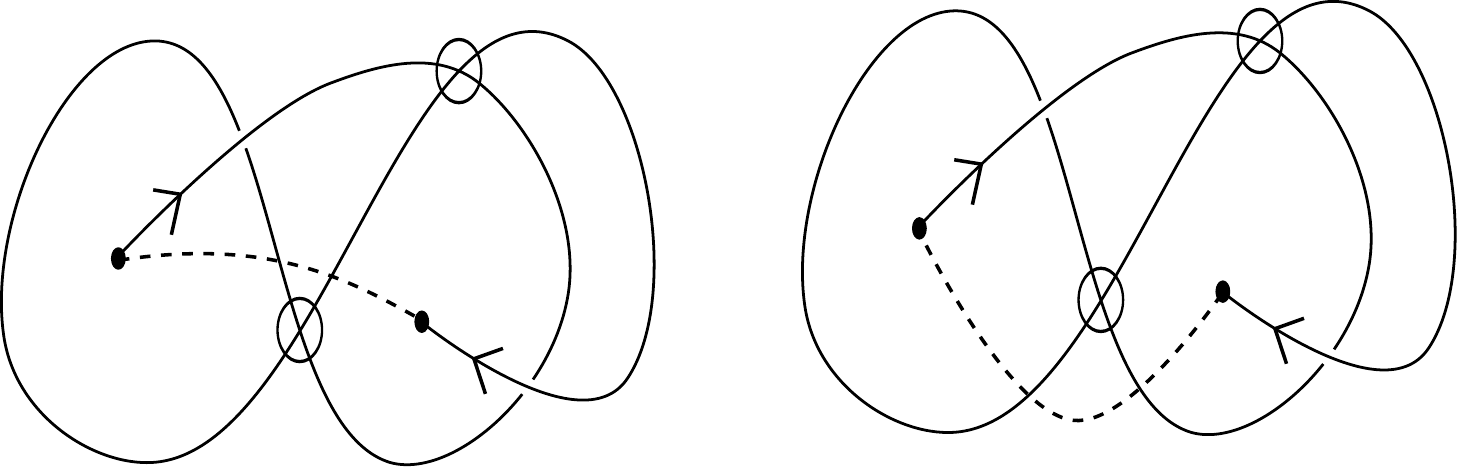}
     \end{tabular}
     \caption{\bf A virtual knotoid diagram representing different knots via $\omega_-$}
     \label{fig:different}
\end{center}
\end{figure}
	Being a well-defined map, the virtual closure map gives a way to construct many invariants for knotoids in $S^2$ which can be generalized to a virtual knotoid invariant. In fact, for any invariant of a virtual knot, denoted by $\Inv$, we can define an invariant on knotoids in $S^2$, denoted by $\Inv$ by the following formula,
\begin{center}                
$\Inv(K)=\Inv(\overline{v}(K))$,
\end{center} 
where $K$ is knotoid in $S^2$. This formula is our main motivation for constructing the invariants of this paper.
\section{Parity and Odd Writhe}
\subsection{Gauss Code}
 The \textit{Gauss code} of a knotoid diagram $K$ (classical,  virtual or flat) is a linear code that consists of a sequence of labels each of which is assigned to the classical (or flat for flat diagrams) crossings encountered during a trip along $K$ from its tail to the head. Since any crossing of $K$ is traversed twice, each label in the code appears twice. Thus, the length of the code is $2n$, where $n$ is the number of classical crossings (flat crossings for flat diagrams) of $K$. We keep the information of the passage through a crossing either as an overcrossing or an undercrossing by adding the symbols $O$ and $U$, respectively, to the code, and we keep the signs of the crossings by putting $+$ or $-$ next to the label accordingly to the sign of the crossing. The resulting code is referred as the\textit{ signed Gauss code} of $K$. Note that the symbols $O$ and $U$ and the signs of crossings are omitted in the Gauss codes of flat knotoid diagrams.

 Gauss codes have a diagrammatic representation as follows. Each label in the Gauss code is represented by $2n$ points placed upon a segment which is oriented from left to right. The points are labeled as the corresponding labels in the code. A signed and oriented chord connects each pair of the labeled points. The orientation of a chord heads from the overcrossing to the undercrossing. That is, during a travel along the knotoid diagram $K$ starting from the tail, if a crossing is first encountered as an overcrossing then the arrow of the corresponding chord heads towards the second appearance of the label. The sign of the chord is the sign of the associated crossing. For flat knotoid diagrams, we have the notion of right and left at each flat crossing. If a crossing is first encountered as going to the right then the head of the arrow on the corresponding chord heads towards the first appearance of the label. We call such a diagram with chords that represents the Gauss code of a knotoid diagram the \textit{chord diagram} of the knotoid diagram. Each knotoid diagram, including classical, virtual and flat knotoid diagrams, has a unique Gauss code and chord diagram. Figure \ref{fig:chordex} depicts the chord diagram of the knotoid diagram $K$ that is given in Figure \ref{fig:knotoid}(g). 
\begin{figure}[H]
    \begin{center}
     \begin{tabular}{c}
     \centering  \scalebox{0.5}{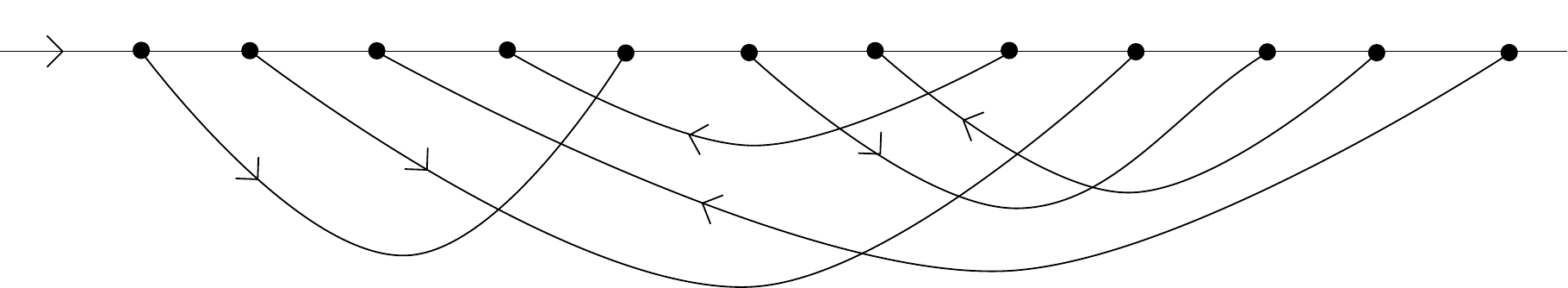}
     \end{tabular}
     \caption{\bf Chord diagram of $K$}
     \label{fig:chordex}
\end{center}
\end{figure}	
\begin{definition}
\normalfont
A single component Gauss code is said to be \textit{evenly intersticed} if there is an even number of labels between two appearances of any label.
\end{definition}
 Any classical knot diagram has evenly intersticed Gauss code \cite{RR}. The Gauss codes of classical knotoid diagrams are not necessarily evenly intersticed. For instance, the Gauss code of $K$ shown in Figure \ref{fig:chordex}, is $OA+OB+UC+UD+UA+OE+UF+OD+UB+UE+OF+OC+$, which is not an evenly-intersticed Gauss code. This fact gives rise to a well-defined parity for the crossings of classical knotoid diagrams taking values in $\mathbb{Z}_2$. A crossing of a classical, virtual or flat knotoid diagram is called \textit{odd} if there is an odd number of labels in between the two appearances of the crossing otherwise it is called an \textit{even} crossing. For the knotoid diagram $K$, the crossings $A, D, E$ and $F$ of the knotoid diagram $K$ are odd, and the crossings $B,C$ are even. Note that for the purpose of parity we may use the Gauss code of the underlying flat diagram of a knotoid diagram. 
\begin{thm}
The Gauss code of a knotoid diagram in $S^2$ is evenly intersticed if and only if it is a knot-type knotoid diagram.
\end{thm}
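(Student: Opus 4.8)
The plan is to recast ``evenly intersticed'' as a separation statement inside $S^2$, and then deduce both implications from elementary mod~$2$ intersection theory on the sphere. Recall that, by definition, ``$K$ is a knot-type knotoid diagram'' means that the tail $t$ and the head $h$ of $K$ lie in one common region of the underlying graph $\Gamma$ of $K$; write $R_t,R_h$ for the regions containing $t,h$, so the statement to be proved is: the Gauss code of $K$ is evenly intersticed $\iff R_t=R_h$.

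The first step is to reformulate the parity of a single crossing. Fix a classical crossing $c$ of $K$, let $L_c$ be the subarc of $K$ running between the two visits to $c$ (a closed curve in $S^2$, hence a mod~$2$ $1$-cycle, based at $c$), and let $\alpha_c$ be the complementary arc of $K$, i.e.\ the concatenation of the subarc from $t$ to $c$ with the subarc from $c$ to $h$; thus $\alpha_c$ joins $t$ to $h$ and passes through $c$. The number of labels strictly between the two occurrences of $c$ in the Gauss code is $\sum_{d\neq c}(\#\text{ visits to }d\text{ on }L_c)$, which modulo~$2$ equals the number of crossings $d\neq c$ having one branch on $L_c$ and the other on $\alpha_c$; since distinct crossings are distinct points, this is $\#(L_c\cap\alpha_c)$ counted away from the point $c$. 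Perturbing $\alpha_c$ off $L_c$ in a neighbourhood of $c$ (the two strands meet there only at $c$ and can be pushed apart) and nudging the two ends of $\alpha_c$ slightly off $\Gamma$ into $R_t$ and $R_h$, I get an arc $\alpha_c'$ from a point $p\in R_t$ to a point $q\in R_h$, transverse to $L_c$, with $\#(\alpha_c'\cap L_c)\equiv(\text{interstice count at }c)\pmod 2$. Because $H_1(S^2;\mathbb{Z}_2)=0$, for fixed $p,q$ the mod~$2$ intersection number of any transverse arc from $p$ to $q$ with the cycle $L_c$ depends only on whether $L_c$ separates $p$ from $q$. This proves the key reformulation: \emph{$c$ is an odd crossing of $K$ if and only if $L_c$ separates $R_t$ from $R_h$.}

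The ``if'' direction of the theorem is then immediate: if $R_t=R_h=R$, a path inside $R$ from $t$ to $h$ misses $\Gamma$ and hence misses every $L_c\subseteq\Gamma$, so no $L_c$ separates and every crossing is even. (Equivalently, one may close $K$ up by a crossingless arc in $R$ to a classical knot diagram whose cyclic Gauss code is the linear Gauss code of $K$, and quote the fact \cite{RR} that classical knot diagrams are evenly intersticed.) For the converse, assume $R_t\neq R_h$; I must produce an odd crossing. Collapsing the two pendant edges of $\Gamma$ at $t$ and at $h$, one sees $H_1(\Gamma;\mathbb{Z}_2)\cong(\mathbb{Z}_2)^n$, where $n$ is the number of crossings. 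Listing the crossing preimages along the parametrizing interval as $z_1<\dots<z_{2n}$, each $L_c$ is (as a mod~$2$ chain) the sum of the consecutive edges spanned by the two preimages of $c$, and a short argument — order the crossings by first preimage; in any vanishing combination $\sum_{c\in S}L_c=0$, the first edge of the crossing in $S$ with smallest first preimage lies in exactly one summand — shows the $n$ classes $[L_c]$ are linearly independent, hence a basis. Now take $R=R_h$ and its boundary $1$-cycle $\partial R\subseteq\Gamma$: viewing $R$ as a $2$-chain with $\partial R$ as boundary and with multiplicity $1$ at a point of $R_h$ but $0$ at a point of $R_t$ (here one uses $R_t\cap\overline{R_h}=\varnothing$), the cycle $\partial R$ separates $R_t$ from $R_h$. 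Expanding $[\partial R]=\sum_{c\in S}[L_c]$ and applying the homomorphism $H_1(\Gamma;\mathbb{Z}_2)\to\mathbb{Z}_2$ that records whether a cycle separates a fixed point of $R_t$ from a fixed point of $R_h$, one of the summands $L_c$ must separate $R_t$ from $R_h$; by the reformulation that crossing $c$ is odd, so the Gauss code is not evenly intersticed.

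The routine portions are the two local perturbations of $\alpha_c$ and the mod~$2$ intersection-number bookkeeping on $S^2$ (all standard once one fixes a triangulation compatible with $\Gamma$). The step I expect to require the most care is the converse: verifying that the sub-loops $L_c$ span $H_1(\Gamma;\mathbb{Z}_2)$ — so that, taken together, they detect every separation of the endpoints — and recognizing the boundary cycle $\partial R_h$ as one of their $\mathbb{Z}_2$-combinations. Everything here is genuinely spherical; the same scheme would not work in $\mathbb{R}^2$, which matches the fact that the theorem is stated for $S^2$.
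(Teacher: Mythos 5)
Your proof is correct, and it shares the paper's geometric germ --- the parity of a crossing $c$ is governed by whether the loop $L_c$ at $c$ comes between the two endpoints --- but its execution differs in a substantive way. The paper treats both directions by applying the Jordan curve theorem to loops at crossings and reading off a pattern in the Gauss code, and in the proper case it only \emph{asserts} that some loop encloses exactly one endpoint. You instead recast the parity of $c$ as the mod~$2$ intersection number of $L_c$ with an arc joining $R_t$ to $R_h$ (well defined because $H_1(S^2;\mathbb{Z}_2)=0$; your check that the smoothing at $c$ contributes nothing, the two branches occupying opposite corners there, is right), which also handles non-simple loops cleanly, and you then \emph{prove} the existence of an odd crossing when $R_t\neq R_h$: the $L_c$ form a basis of the cycle space of $\Gamma$ (your ordering argument works), $\partial R_h$ expands in this basis, and the intersection functional given by a fixed arc from $R_t$ to $R_h$ equals $1$ on $\partial R_h$, hence on some $L_c$. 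So your route supplies a rigorous justification for exactly the step the paper leaves implicit, and it isolates where sphericity enters, at the cost of more machinery; the paper's route is shorter and pictorial. One wording point: since $L_c$ need not be simple, ``$L_c$ separates $R_t$ from $R_h$'' must be read throughout as ``odd mod~$2$ intersection number with an arc from $R_t$ to $R_h$'' (equivalently, the two endpoints receive different multiplicities in the $2$-chain bounded by $L_c$), not literal point-set separation; only under that reading is your map $H_1(\Gamma;\mathbb{Z}_2)\to\mathbb{Z}_2$ linear, and that is the reading your argument actually uses, so nothing breaks.
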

\begin{proof} 
\textit{The loop} at a (classical) crossing of a knotoid diagram in $S^2$ is defined to be the path obtained by traversing the knotoid diagram starting and ending at that crossing. There is a loop at each crossing of a knotoid diagram. Let $K$ be a proper knotoid diagram. Then one of the endpoints of $K$ is separated from the other endpoint by at least one loop at a crossing of $K$, that is, one of the endpoints is located inside at least one loop. All the strands entering the loop except the one that is adjacent to the endpoint, leave the loop by Jordan curve theorem. Thus each such strand contributes with a pair of labels to the Gauss code of the diagram. The Gauss code of $K$ along this loop is in the following pattern:  ...$c...~d~a~e~...\overline{a}...~\overline{e}~c$..., where $c$ represents the crossing that forms the loop containing the endpoint, $d$ represents the crossing of the strand adjacent to the endpoint with the loop, and $a, \overline{a}$ and $e, \overline{e}$ for the pairs of crossings created by the transversally intersecting strands which enter and leave the loop. Thus, between the two appearances of the label $c$, we have an old number of labels so that the Gauss code of $K$ is not evenly-intersticed. 
For a knot-type diagram $K$, we can assume that the tail and head lie in the outermost region of the diagram (where the $\infty$-point is located) so that none of the loops at crossings encloses them. Again by Jordan curve theorem, all the strands passing through any of the loops of $K$ enter and leave the loop so that they contribute with a pair of labels to the Gauss code of $K$. This shows that each crossing is even, that is, the Gauss code of $K$ is evenly-intersticed.
\end{proof}
\begin{lem}
The Gauss code of a knotoid diagram in $S^2$ is the same as the Gauss code of its virtual closure.
\end{lem}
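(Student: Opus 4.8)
The plan is to compare the two Gauss codes directly from their definitions, the key point being that forming the virtual closure introduces no new classical crossings. First I would recall, from the construction of the virtual closure map $\overline{v}$ in Section 3.3, that $\overline{v}(K)$ is obtained from $K$ by adjoining an embedded connection arc running from the head to the tail, along which every intersection with the remainder of the diagram is declared a virtual crossing. Consequently the classical crossings of $\overline{v}(K)$ are exactly the classical crossings of $K$, and at each of them the over- and under-passing strands --- hence the crossing sign and the $O/U$ decoration --- are precisely those carried by $K$; no classical crossing is created or destroyed, and the connection arc may be taken disjoint from all classical crossings.

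Next I would fix how the Gauss code of the closed diagram $\overline{v}(K)$ is to be read: traverse it starting at the former tail of $K$, in the orientation inherited from $K$. The trip then runs first through the whole original knotoid diagram from tail to head, and afterwards back along the connection arc from head to tail. Along the first stretch one meets exactly the classical crossings of $K$, in the same order and with the same $O/U$ labels and signs as in the signed Gauss code of $K$. Along the second stretch one meets only virtual crossings, which, as for any virtual diagram, contribute no symbols to the Gauss code. Hence the labeled, signed sequence recorded while traversing $\overline{v}(K)$ is letter-for-letter the signed Gauss code of $K$, which is the assertion.

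There is no serious obstacle here; the only thing to state carefully is the book-keeping of the first step --- that the closure arc meets the diagram only in virtual crossings, so that the classical-crossing data of $K$ transports verbatim to $\overline{v}(K)$. I would also note that the identical reasoning, applied to the underlying flat diagrams, gives that the (unsigned) Gauss code of $F(K)$ coincides with that of the underlying flat diagram of $\overline{v}(K)$; combined with the preceding characterisation of knot-type knotoid diagrams by even interstices, this is what will later let us transfer the parity of a crossing of $K$ to its virtual closure.
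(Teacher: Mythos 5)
Your proposal is correct and follows essentially the same route as the paper: the virtual closure only adjoins virtual crossings along the connection arc, so the classical crossing data, and hence the signed Gauss code, is unchanged. The paper states this in one line; your version merely spells out the traversal and book-keeping more explicitly.
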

\begin{proof}
The virtual closure map adds virtual crossings to a given knotoid diagram then it is clear that the map does not have any effect on the Gauss code of the diagram.
\end{proof}
\begin{definition}\normalfont
Using the parity of crossings, we define the \textit{odd writhe} for both classical and virtual knotoid diagrams as the sum of the signs of the odd crossings,
\begin{center}
\textit{Odd Writhe} of $K$ $=$ $J(K)$ $=$ $\sum_{c\in \Odd(K)}\signn(c)$,
\end{center}
where $K$ is a knotoid diagram and Odd(K) is the set of odd crossings in $K$.
\end{definition}
\begin{thm}
Odd writhe is a virtual and classical knotoid invariant.
\end{thm}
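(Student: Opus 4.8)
The plan is to exploit the fact that $J$ is an invariant of the Gauss code alone: the odd crossings of a diagram, and their signs, are read directly off its Gauss code, so any move that preserves the Gauss code preserves $J$. This immediately disposes of isotopy of $S^2$ and of all the virtual moves — the detour move, $V\Omega_1,V\Omega_2,V\Omega_3$, the partial virtual move and $\Omega_v$ — since these involve only virtual crossings and leave the cyclic sequence of classical crossings (with their over/under data and signs) untouched. By the Lemma just proved, the Gauss code of a knotoid in $S^2$ equals the Gauss code of its virtual closure, and the analogous statement holds for virtual knotoid diagrams and the extended virtual closure, for the same trivial reason. Moreover, for a cyclic Gauss code of length $2n$ the parity of a crossing is well defined, because the two complementary arcs between its occurrences carry label counts differing by the even number $2n-2$; restricting to the arc not containing the connecting arc recovers exactly the linear (knotoid) count. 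Hence $J(K)$ coincides with the odd writhe of the virtual knot $\overline{v}(K)$, both as a number and in the sense that $\Odd(K)$ matches the odd crossings of $\overline{v}(K)$ with matching signs.

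Given this, the theorem follows at once: the virtual closure map (respectively the extended virtual closure map on virtual knotoid diagrams) is well defined into virtual knots by Section 3.3, and the odd writhe is a known invariant of virtual knots. Therefore, if $K_1$ and $K_2$ are equivalent (classical or virtual) knotoid diagrams, then $\overline{v}(K_1)=\overline{v}(K_2)$ as virtual knots, so $J(\overline{v}(K_1))=J(\overline{v}(K_2))$, and since $J(K_i)=J(\overline{v}(K_i))$ we get $J(K_1)=J(K_2)$. This is the route I would take, since it is short and is exactly the construction $\Inv(K)=\Inv(\overline{v}(K))$ promoted at the end of Section 3.

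For a self-contained argument one instead checks the classical $\Omega$-moves directly on the Gauss code. For $\Omega_1$ the new label occurs as an adjacent repeated pair, so the new crossing is even (it contributes $0$ to $J$), and inserting two adjacent labels shifts the number of labels between the two occurrences of any other crossing by $0$ or $2$, hence preserves all other parities. For $\Omega_2$ the two new labels appear as two consecutive pairs (along each strand of the bigon they are adjacent), giving a code of the form $X\,c_1c_2\,Y\,c_1c_2\,Z$ or $X\,c_1c_2\,Y\,c_2c_1\,Z$; a one-line count shows $c_1$ and $c_2$ have the same parity, and since $\signn(c_1)=-\signn(c_2)$ their combined contribution to $J$ is $0$, while every other parity is again shifted by an even number. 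For $\Omega_3$ no crossing is created or destroyed and no sign changes, so one must verify that the move preserves the parity of every crossing. I expect this last case to be the main obstacle on the self-contained route: one has to inspect the (finitely many, up to symmetry) chord-diagram pictures of an $\Omega_3$ move and confirm that in each the three chords of the involved crossings remain pairwise linked — so each is linked with an even number of the other two — while the move only permutes their labels within three fixed blocks, so that linkedness of every outside chord with each of these three is unchanged and linkedness among outside chords is unchanged; combining these shows all parities survive. The virtual-closure route avoids this bookkeeping altogether by borrowing the known virtual-knot statement, at the cost of the (easy) compatibility remark above between linear and cyclic parity.
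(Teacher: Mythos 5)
Your main argument is correct, but it is a genuinely different route from the paper's. The paper proves invariance head-on: it observes that the detour-type moves do not alter the Gauss code, and then checks $\Omega_1$, $\Omega_2$, $\Omega_3$ directly on Gauss codes (the new $\Omega_1$ crossing is even; an $\Omega_2$ move creates two crossings of equal parity and opposite sign; an $\Omega_3$ move permutes the labels without changing parities or signs, inside or outside the move pattern). You instead reduce everything to virtual knots: using the Lemma that a knotoid and its (extended) virtual closure have the same Gauss code, together with the compatibility of the linear parity count with the cyclic one, you get $J(K)=J(\overline{v}(K))$ and then quote well-definedness of the closure map and the known invariance of the odd writhe for virtual knots. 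This is exactly the philosophy $\Inv(K)=\Inv(\overline{v}(K))$ advertised at the end of Section 3, and it buys brevity; what it costs is self-containedness, since it leans on the virtual-knot invariance of the odd writhe (external to the paper) and on the well-definedness of the extended closure for the virtual-knotoid case, whereas the paper's hands-on analysis of how parities behave under each move is reused later (for the parity bracket, for Proposition on reducibility of parity states, and for the symmetry of weights). Two small inaccuracies to fix: the two complementary arcs of a cyclic code of length $2n$ have label counts \emph{summing} to $2n-2$ (not differing by it) --- which is all you need, since an even sum forces equal parities; and in your fallback direct argument the claim that the three $\Omega_3$ chords ``remain pairwise linked'' is not true in general (their pairwise linkedness depends on the internal order of the three two-label blocks), so that sketch would need the more careful case analysis the paper alludes to --- though your primary route does not depend on it.
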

\begin{proof}
 The virtual moves that are induced by the detour move do not have an effect on the structure of Gauss code of a virtual knotoid diagram. As a result, the set of odd crossings of the virtual knotoid diagram remains the same under these moves. The odd writhe is invariant under the virtual moves.
It is left to verify the invariance under the $\Omega$-moves. The changes in Gauss codes under some of the classical moves are illustrated in Figure \ref{fig:odd}. In this figure, $A$, $B$, $C$ are the labels of the crossings inside the move patterns. and $\tau$, $\gamma$ and $\omega$ denote the words consisting of the crossing labels that are met by traversing the diagram outside the move patterns.  We observe the following. An $\Omega_1$-move adds/removes two consecutive labels in the Gauss code. The parity of the crossings outside the move region remains the same and being an even crossing, the added/removed crossing by an $\Omega_1$-move does not affect the odd writhe.

 An $\Omega_2$-move adds/removes either a pair of even crossings or a pair of odd crossings with opposite signs for any orientation type of the move. In the former case the even crossings do not have any effect on the odd writhe. In the latter case, the two odd crossings will be canceled out in the odd writhe summation for they have opposite signs. The parity of the crossings located outside the $\Omega_2$ move region, remains the same since the labels which are added/removed by one $\Omega_2$-move are located as consecutive pairs in the the Gauss code.

 The triangular move pattern of $\Omega_3$-move can contain either three even crossings or two odd crossings and one even crossing. In the former case, these even crossings are taken to even crossings by an $\Omega_3$-move and the parity of other crossings outside the move pattern remains the same thus the odd writhe is not affected. In the latter case, an $\Omega_3$- move permutes the order of the odd crossings in the $\Omega_3$-move region. The parity and the sign of the odd crossings remain the same. It is not hard to see that the parity of crossings outside the move pattern does not change. The arguments above hold for the other cases of the classical moves. Therefore, the odd writhe is a virtual knotoid invariant.
\end{proof}
\begin{figure}[H]
\begin{center}
  \begin{tabular}{c}
	\Huge{
	\centering \scalebox{0.35}{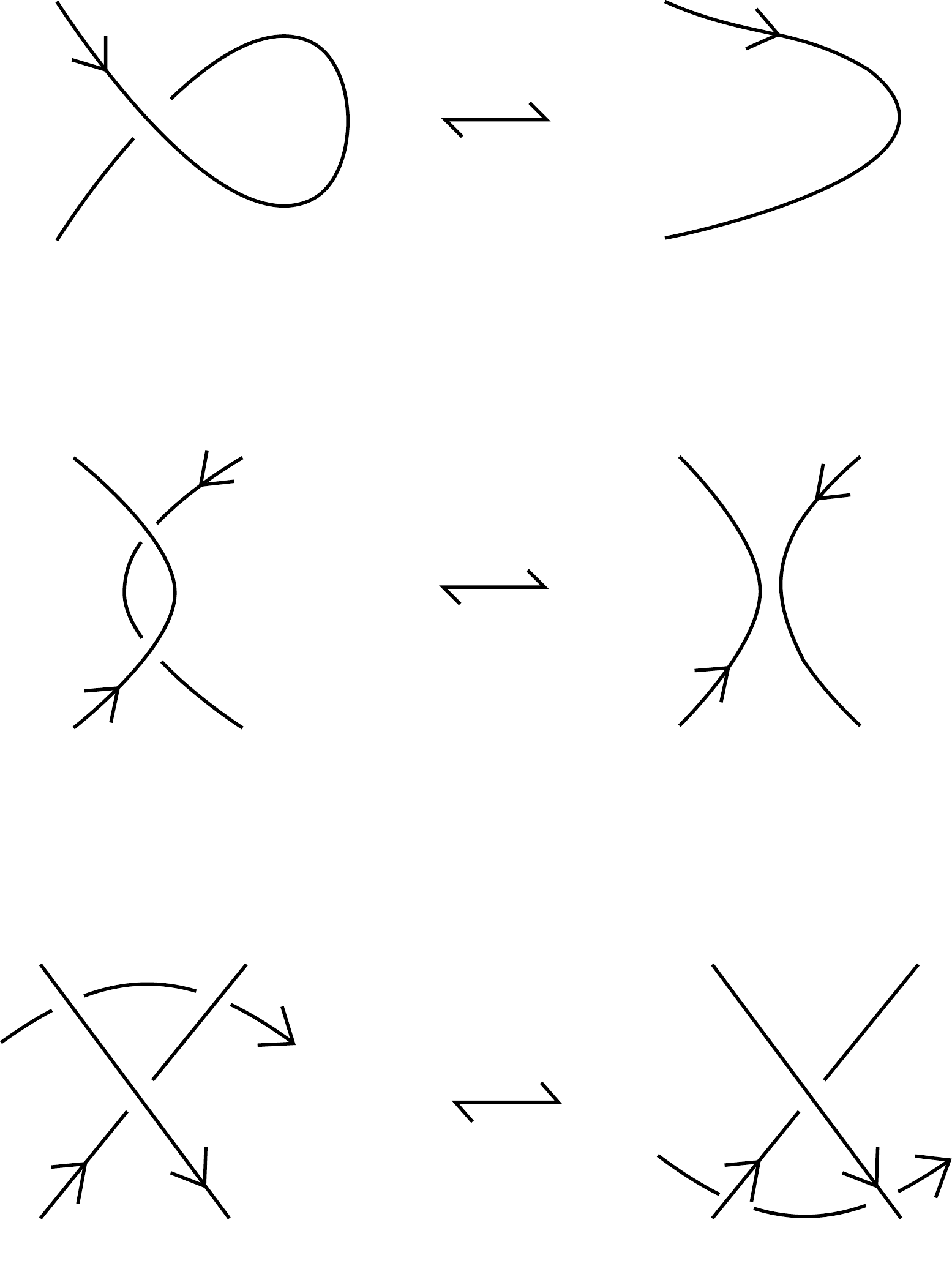}
	}
	\end{tabular}
	\caption{\bf The changes in Gauss codes under some $\Omega$- moves}
	\label{fig:odd}
	\end{center}
	\end{figure}
\begin{cor}\normalfont 
If a knotoid $K$ is a knot-type knotoid then the odd writhe of $K$ is zero.
\end{cor}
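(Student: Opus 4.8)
The plan is to deduce the statement directly from the two results just proved: the characterization of knot-type knotoid diagrams as exactly those with evenly intersticed Gauss code, and the invariance of the odd writhe under all $\Omega$-moves (and, in the virtual setting, the detour move). Since $J$ is a genuine invariant of (classical and virtual) knotoids, it suffices to exhibit one representative diagram of $K$ on which $J$ visibly vanishes.

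First I would use the definition of knot-type knotoid: $K$ has in its equivalence class a \emph{knot-type knotoid diagram} $D$, i.e.\ a diagram whose tail and head lie in the same region of the diagram. Applying the theorem that a knotoid diagram in $S^2$ has evenly intersticed Gauss code if and only if it is a knot-type knotoid diagram, we conclude that between the two occurrences of any label in the Gauss code of $D$ there is an even number of labels. Equivalently, every crossing of $D$ is \emph{even}, so $\Odd(D)=\varnothing$ and hence
\[
J(D)=\sum_{c\in\Odd(D)}\signn(c)=0 .
\]

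Finally, since the odd writhe is a knotoid invariant, its value on the knotoid $K$ agrees with its value on the chosen representative $D$, giving $J(K)=J(D)=0$, which is the assertion. I do not expect a genuine obstacle: the only point needing a word of care is that a knot-type knotoid can also be drawn as a proper (non knot-type) diagram, which may well have odd crossings, so one must evaluate $J$ on an honest knot-type diagram rather than on an arbitrary representative — but the invariance of $J$ makes this choice harmless.
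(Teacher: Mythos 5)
Your argument is correct and is exactly the paper's proof: the paper's Corollary is justified by citing Theorem 4.1 (knot-type diagrams are precisely those with evenly intersticed Gauss code, hence no odd crossings) together with Theorem 4.3 (invariance of the odd writhe), which is the same two-step reasoning you spell out. Your added remark about evaluating $J$ on an honest knot-type representative rather than an arbitrary diagram is the right point of care, and the invariance handles it just as you say.
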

\begin{proof}
 It follows by Theorem 4.1 and Theorem 4.3.
\end{proof}
\begin{rem}
\normalfont
The crossings shared by any two components of a multi-knotoid diagram obstruct extending the parity to multi-knotoids. The Gauss code of the multi-knotoid diagram given in Figure \ref{fig:multi} is $O1- U2- O3- O4+ U1- O2- U3-/ U4+$. Crossings $1, 2$ and $3$ are even crossings in the circular component. These crossings become odd when the second component is considered. 
\begin{figure}[H]
\begin{center}
     \begin{tabular}{c}
     \centering  \scalebox{0.30}{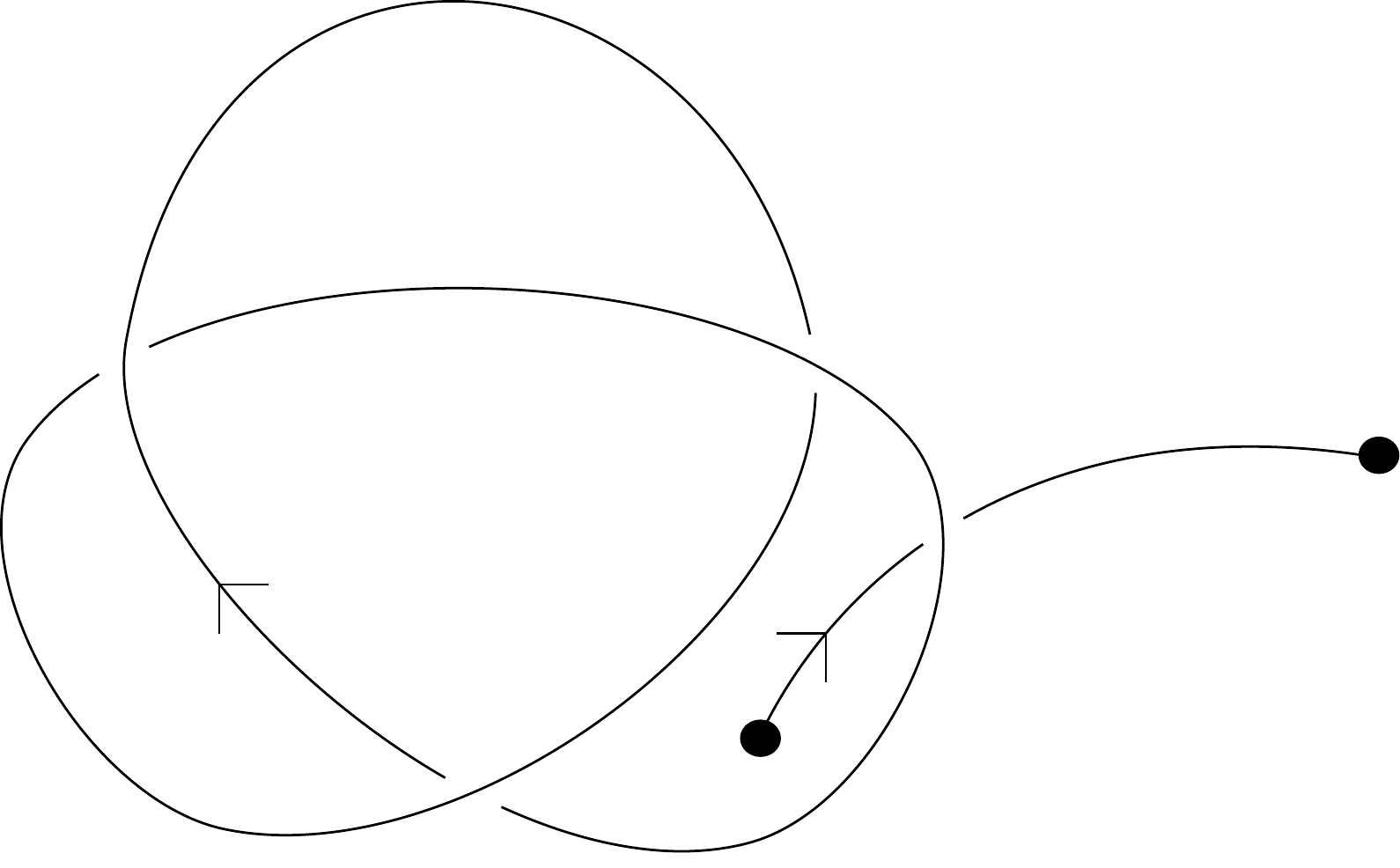}
     \end{tabular}
     \caption{\bf A multi-knotoid diagram}
     \label{fig:multi}
\end{center}
\end{figure}
 To define a well-defined parity for crossings of multi-knotoid diagrams, we apply the same method used in \cite{Ma, Kae, kk} to extend the parity to a parity of virtual links. The idea is to regard the crossings of a multi-knotoid diagram that are shared by two components as \textit{link crossings}. The parity remains the same for self-crossings of each component, as odd or even crossings. In particular for the diagram given in Figure \ref{fig:multi}, the crossings $1$, $2$, $3$ are even and the crossing $4$ is a link crossing. 
\end{rem} 
\subsection{Parity Bracket Polynomial}
 The parity bracket polynomial of V.~Manturov \cite{Ma} is a modification of the bracket polynomial that uses the parity of crossings in virtual knots and links. With the existence of even and odd crossings in knotoid diagrams, the parity bracket polynomial can be defined for both classical and virtual knotoid diagrams. For a knotoid diagram $K$, either classical or virtual, a \textit{parity state} is defined to be a labeled graph (a virtual graph for virtual diagrams). A parity state of a virtual knotoid diagram $K$ is obtained by smoothing the even crossings of $K$ by A- and B- smoothing type of the usual bracket polynomial and labeling the smoothing sites by $A$ or $A^{-1}$, respectively, and replacing the odd crossings of $K$ by graphical nodes. Note that circular and long segment components of parity states are regarded as graphs.
\begin{figure}[H]
    \begin{center}
     \begin{tabular}{c}
     \centering  \scalebox{0.75}{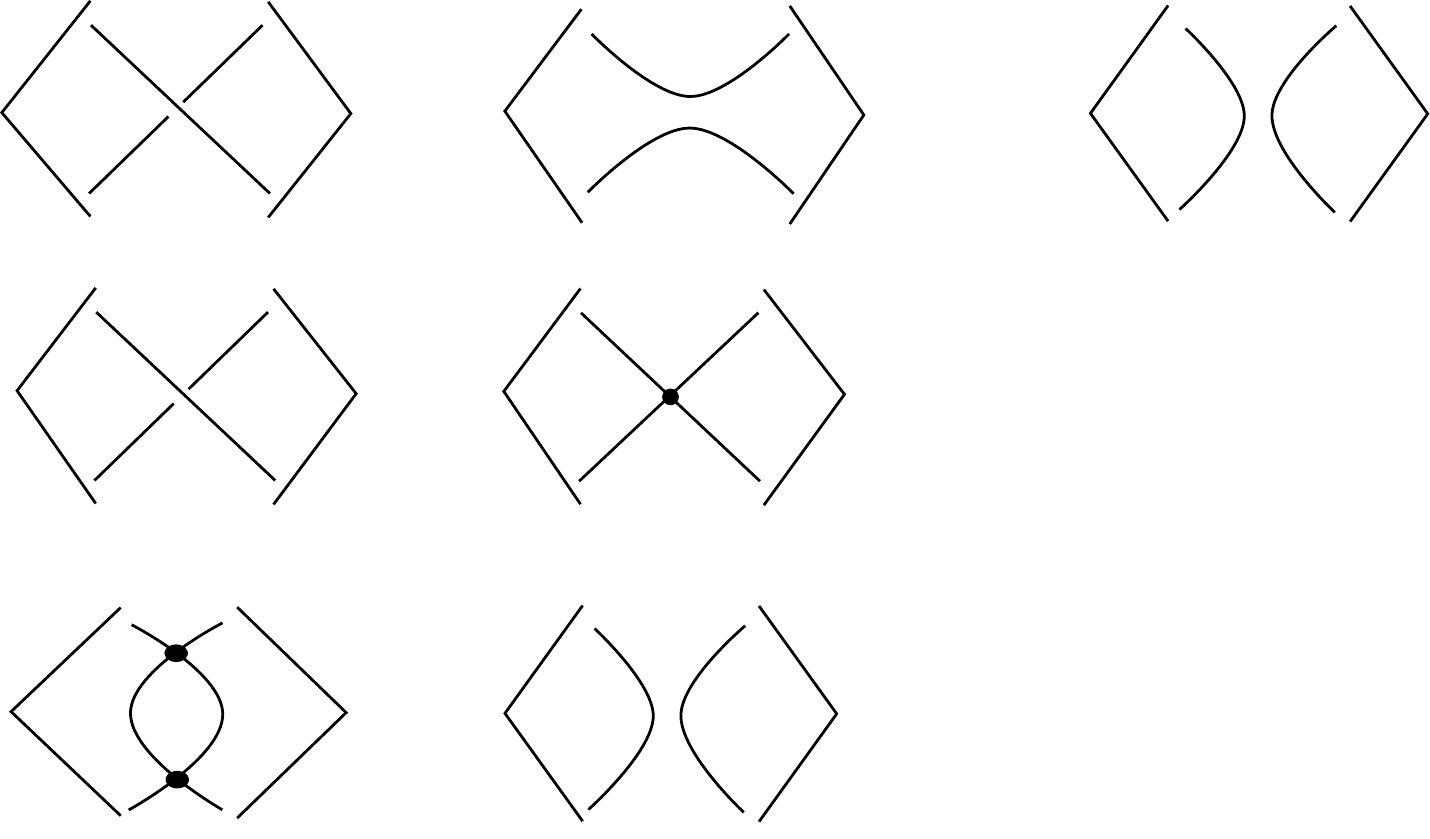}
     \end{tabular}
     \caption{\bf Parity bracket expansion}
     \label{fig:skein}
\end{center}
\end{figure}	
 The resulting states are taken up to the virtual equivalence (isotopy of $S^2$ and detour moves) and up to the \textit{reduction rule}, shown in Figure \ref{fig:skein}. The reduction rule is simply a Reidemeister two- move 
that eliminates two graphical nodes forming the vertices of a bigon. The state components that still contain nodes after applying the reduction rule, are called \textit{irreducible state components}. Each irreducible state component contributes to the polynomial as a graphical coefficient.  
\begin{definition}
The \textit{parity bracket polynomial} of a virtual or classical knotoid diagram $K$, is defined as
 \begin{center}
$<K>_P$=$\sum_S A^{n(S)}(-A^2-A^{-2})^{l(S)}G(S)$,
\end{center}
where $n(S)$ denotes the number of $A$-smoothings minus the number of $B$-smoothings, $l(S)$ is the number of components without any nodes of the parity state $S$ and $G(S)$ is the union of irreducible state components.
\end{definition}
\begin{definition}\normalfont
The \textit{normalized parity bracket polynomial} of a knotoid diagram (classical or virtual) $K$ is defined as
\begin{center}
$P_K=(-A^3)^{-\writhe(K)}<K>_P$.
\end{center}
\end{definition}
\begin{thm}
The normalized parity bracket polynomial is a virtual and a classical knotoid invariant.
\end{thm}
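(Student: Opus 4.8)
Since the generalized $\Omega$-moves contain the classical $\Omega$-moves, it suffices to prove invariance in the virtual category; the classical statement follows automatically. The plan is the standard one for bracket-type invariants: show that $\langle K\rangle_P$ is invariant under isotopy of $S^2$, under all detour-type moves, and under $\Omega_2$ and $\Omega_3$, while under $\Omega_1$ it is multiplied by $-A^{\pm 3}$ exactly as the ordinary bracket is; since an $\Omega_1$-move changes $\writhe(K)$ by the corresponding $\pm 1$, the factor $(-A^3)^{-\writhe(K)}$ cancels it, so $P_K$ is invariant. Throughout I use that the even/odd parity of a classical crossing of a (virtual) knotoid diagram is well defined because it is read off the Gauss code, i.e.\ the chord diagram (Section 4.1), and that the reduction rule of Figure \ref{fig:skein} assigns to each parity state $S$ a well-defined irreducible graph $G(S)$.

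\emph{Detour-type moves and isotopy of $S^2$.} The detour move, the virtual $\Omega_{i=1,2,3}$-moves, the partial virtual move, $\Omega_v$, and isotopy of $S^2$ only create, delete, or rearrange virtual crossings (and, for $\Omega_v$, move an endpoint along an arc free of classical crossings). None of these alters the Gauss code of $K$, so the partition of classical crossings into even and odd is unchanged, and each parity state is unchanged as a virtual graph up to exactly the equivalence (isotopy of $S^2$ plus detour moves) under which parity states are already identified in the definition. Hence $\langle K\rangle_P$ and $\writhe(K)$ are both unaffected.

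\emph{$\Omega_1$ and $\Omega_2$.} The crossing created by an $\Omega_1$-move is always even, since its two occurrences are adjacent in the Gauss code (this is the same observation used in the proof of Theorem 4.3 and in Theorem 4.4), and the parities of all other crossings are preserved because the two inserted/deleted labels are consecutive; thus this crossing is smoothed by the ordinary $A$/$B$-rule and the local computation is identical to that of the classical bracket, multiplying $\langle K\rangle_P$ by $-A^{3}$ or $-A^{-3}$, which the writhe normalization absorbs. For $\Omega_2$ the writhe is unchanged, since the two new crossings have opposite signs, and by the analysis in the proof of Theorem 4.4 they are either both even or both odd. If both are even they are smoothed normally and the three surviving terms $A^2$, $AA^{-1}(-A^2-A^{-2})$, $A^{-1}A$ collapse to the two-parallel-strands configuration as in the classical case. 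If both are odd, each becomes a node and the two nodes bound a bigon, so the reduction rule of Figure \ref{fig:skein} deletes them, again returning the two-parallel-strands configuration; here one uses that the parities of the remaining crossings are unchanged because the inserted/deleted labels form a consecutive block. In every case $\langle K\rangle_P$, hence $P_K$, is unchanged.

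\emph{$\Omega_3$ (the main obstacle).} The writhe is preserved. The key combinatorial input is that among the three crossings of the triangle an \emph{even} number are odd — either none or exactly two — which is a property of Gaussian parity readable directly from the chord diagram (it is one of the defining axioms of a parity in Manturov's sense). If all three are even, the move is handled by the classical $\Omega_3$-computation (which follows formally from $\Omega_2$-invariance and the basic skein identity), and the parities of crossings outside the triangle are unchanged. If exactly two crossings $a,b$ are odd and the third, $c$, is even, the plan is to expand $c$ by its $A$- and $B$-smoothing on both sides of the move; in each resulting local state the two nodes at $a,b$ bound a bigon and are removed by the reduction rule, after which the left and right sides of the $\Omega_3$-move yield the same labeled graph, smoothing by smoothing, so the two parity states contribute equally. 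The hard part is precisely this last case: one must check carefully that after smoothing $c$ the two node-crossings really do form a reducible bigon in \emph{every} one of the local states on both sides, and that the $A$/$B$-labels of $c$ are matched so the contributions pair off correctly. This is the knotoid analogue of Manturov's $\Omega_3$-verification for the parity bracket of virtual knots \cite{Ma}, and the same local case analysis applies verbatim, since all the pictures involved are local and contain only classical strands — the endpoints of the knotoid never enter the move disks. Combining the four cases ($\Omega_1$, $\Omega_2$, $\Omega_3$, detour) gives invariance of $\langle K\rangle_P$ under all moves except $\Omega_1$, and hence invariance of the normalized polynomial $P_K$, completing the proof.
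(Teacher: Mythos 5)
Your proof follows essentially the same route as the paper's: a move-by-move verification in which the detour-type moves leave the Gauss code and parity untouched, the $\Omega_1$ crossing is always even and its factor $-A^{\pm 3}$ is absorbed by the writhe normalization, the $\Omega_2$ pair is either both even (ordinary bracket computation) or both odd (bigon reduction), and the $\Omega_3$ triangle contains either zero or exactly two odd crossings, the latter case handled by smoothing the even crossing and comparing the resulting states. One small correction to your $\Omega_3$ discussion: after smoothing the even crossing, only one of the two smoothings produces a bigon between the two nodes; in the other smoothing no bigon forms, and the states on the two sides of the move are instead related by an isotopy of the state component, which is exactly what the paper invokes for this case. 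With that adjustment — and it is indeed Manturov's local case analysis, applicable here because the endpoints never enter the move disks — your argument coincides with the paper's proof.
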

\begin{proof}
It can be verified by the reader that one $\Omega_1$-move adds/removes an even crossing and changes the polynomial by $-A^{\pm 3}$. Then the writhe normalization makes the parity polynomial invariant under an $\Omega_1$- move. An $\Omega_2$-move may add two crossings that are both even crossings. In this case the parity bracket polynomial is invariant under this move since the bracket polynomial is invariant under $\Omega_2$-move. If the crossings in the move pattern are both odd crossings, the reduction rule applies and eliminates the crossings. So, the polynomial does not change by an $\Omega_2$- move. It is clear that the invariance under $\Omega_3$- move, if three of the crossings in the triangular region are even, follows from the bracket polynomial invariance, and if two of them are odd and one is an even crossing then the invariance follows by an isotopy of the state component. 
\end{proof}
 The normalized parity bracket polynomial can be defined for flat virtual knotoid diagrams. Let $K$ be a flat virtual knotoid diagram, all even crossings of $K$ are smoothed out in two ways, and the smoothing sites are labeled by $A=-1$. Odd crossings of $K$ are replaced by graphical nodes and we obtain the parity states of $K$. The reduction rule applies the same on the states of $K$ for the elimination of nodes. The \textit{parity bracket polynomial} of $K$ is defined as,
\begin{center}
$<K>_P$=$\sum_S -2^{l(S)}G(S),$
\end{center}
where $l(S)$ is the number of components without any nodes of the parity state $S$ and $G(S)$ is the union of irreducible state components.
\begin{thm}
The parity bracket polynomial is an invariant of flat virtual knotoids if it is normalized by the writhe. The normalized parity bracket polynomial of a flat knotoid in $S^2$ is trivial.
\end{thm}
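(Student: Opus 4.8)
The plan is to prove the two assertions separately: first, invariance of the normalized parity bracket under the generalized flat $\Omega$-moves, and second, its triviality for flat knotoids in $S^2$. For the first part I would follow the template of the proof of the corresponding statement for classical and virtual knotoids (Theorem 4.4) almost verbatim. The only genuinely new inputs are that the sign of a flat crossing is recorded by the right/left datum at that crossing, so that $\writhe(K)$ is well defined for a flat virtual knotoid diagram and changes exactly as in the classical case under $\Omega$-moves; and that the bracket variable is specialized so that the loop value becomes $-2$. With these in place, the notions of odd and even crossing are already available from the Gauss code of the underlying flat diagram, and the formation of parity states, the reduction rule, and the bookkeeping of irreducible state components $G(S)$ carry over unchanged.

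Carrying this out step by step: invariance under the detour move and the flat virtual $\Omega$-moves is immediate, since these moves do not change the Gauss code of the underlying flat diagram, hence leave the set of odd crossings and the smoothing structure untouched, so $\langle K\rangle_P$ is unchanged even before normalization. For a flat $\Omega_1$-move, the curl introduces a crossing whose two occurrences in the Gauss code are consecutive, hence an even crossing; smoothing it and using the loop value $-2$ multiplies $\langle K\rangle_P$ by a fixed unit, and since $\writhe$ changes by $\pm1$ under a flat $\Omega_1$ and is fixed under $\Omega_2$ and $\Omega_3$, the writhe normalization absorbs this discrepancy just as in Theorem 4.4. For an $\Omega_2$-move, the two crossings appearing or disappearing always have the same parity (a direct interstice count in the Gauss code, exactly as used in the proof of Theorem 4.4): if both are even, the four-term smoothing expansion collapses to the original diagram by the usual bracket computation with $d=-2$; if both are odd, the two graphical nodes bound a bigon and are deleted by the reduction rule. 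For an $\Omega_3$-move, the three crossings in the triangle are either all even or two odd and one even; in the all-even case the state-level argument for $\Omega_3$-invariance of the ordinary bracket applies, and in the mixed case one smooths the single even crossing and then slides the two odd nodes across the remaining strand by an isotopy of the four-valent state graph, which preserves each irreducible state component and hence each coefficient $G(S)$. In every case one must also check that the parities of crossings lying outside the move region are unaffected; this is precisely the Gauss-code bookkeeping performed in the proof of the invariance of the odd writhe (Theorem 4.3), which never uses over/under information and so transfers verbatim to flat diagrams.

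For the triviality statement, the trivial flat knotoid diagram is a crossing-free arc, so it has a unique parity state, a single long-segment component carrying no nodes; hence its normalized parity bracket has empty graphical part and equals a fixed constant. By Proposition 3.6 every flat knotoid in $S^2$ is $f$-equivalent to the trivial knotoid, and by the first part the normalized parity bracket is an invariant of $f$-equivalence, so it takes this same constant value on every flat knotoid in $S^2$; in other words it carries no information. This argument genuinely uses $S^2$, since Proposition 3.6 fails in $\mathbb{R}^2$, consistent with the existence of nontrivial flat planar knotoids remarked on after that proposition.

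The main obstacle is the mixed-parity $\Omega_3$ case of the first part: one must verify carefully that, after resolving the even crossing, the two odd nodes and the third strand can be rearranged by a node-graph isotopy (the analogue of a detour on the four-valent graph) so that the resulting irreducible state components, and therefore the coefficients $G(S)$, agree on the two sides of the move, and that this rearrangement is compatible with the reduction rule. The remaining ingredients — fixing the $\Omega_1$ normalization constant under the chosen specialization of the bracket variable, and the interstice counts showing that parities outside move regions are preserved — are routine but convention-sensitive verifications, and the triviality statement is then immediate from Proposition 3.6.
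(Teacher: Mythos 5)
Your proposal is correct and follows essentially the same route as the paper: the paper's (very terse) proof likewise reduces invariance to the same case-by-case check of the flat $\Omega$-moves already carried out for Theorem 4.4, and deduces triviality on $S^2$ directly from Proposition 3.6. Your write-up simply supplies in full the details the paper leaves to the reader (the $A=-1$ specialization with loop value $-2$, the parity bookkeeping outside the move regions, and the mixed-parity $\Omega_3$ case), with only a minor convention-level caveat about how a ``writhe'' is assigned to flat crossings, which is harmless since at $A=-1$ the normalization factor is $1$.
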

\begin{proof}
The invariance of the normalized parity bracket polynomial can be seen easily by checking of the invariance of the polynomial under the flat $\Omega$- moves. Since any flat classical knotoid is f-equivalent to the trivial knotoid, the second statement follows.
\end{proof}
\begin{prop}
There is no irreducible state component in the parity state expansion of a knotoid diagram in $S^2$.
\end{prop}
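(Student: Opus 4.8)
The plan is to examine a single parity state $S$ of a knotoid diagram $K$ in $S^2$ and show that after the reduction rule is applied no component of $S$ can carry a node. The first observation is structural: since $K$ is classical and lies in $S^2$, so is $S$ once its odd crossings are replaced by nodes, i.e. $S$ is a flat classical multi-knotoid diagram in $S^2$, consisting of one long-segment component $L$, finitely many circular components $C_1,\dots,C_m$, and nodes located exactly at the odd crossings of $K$. Thus each $C_i$ is a flat classical knot diagram in $S^2$ and $L$ is a flat classical knotoid diagram in $S^2$. By the classical fact that every flat classical knot diagram is trivial, and by Proposition 3.6 that every flat knotoid in $S^2$ is $f$-equivalent to the trivial knotoid, each component of $S$ is $f$-equivalent to a nodeless diagram.

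The content of the proposition is therefore to upgrade this $f$-triviality to reducibility by the reduction rule (deletion of a bigon of two nodes), together with isotopy of $S^2$ and detour moves, the latter being vacuous since $S$ has no virtual crossings. First I would dispose of the circular components by arguing that they in fact carry no node at all: smoothing an even crossing of $K$ does not alter the Gaussian parity of the remaining crossings (the evenly intersticed property is local to the two arcs cut off by the crossing), so a node lying on some $C_i$ would be an odd crossing of the flat classical knot diagram $C_i$; but a classical knot diagram has an evenly intersticed Gauss code, a contradiction. Hence every node of $S$ lies on $L$, and the number of such nodes, being the number of odd crossings of $K$, is even, since in any chord diagram the number of chords linked with an odd number of other chords is even.

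It remains to reduce $L$, a flat classical knotoid diagram in $S^2$ with an even number of nodes. Here I would follow the proof of Proposition 3.6: normalize $L$ so that its tail lies in the outermost region, pass to the overlying ascending classical knotoid diagram, lift it to the associated space curve, and unwind that curve as in that proof. The thing to check is that the simplifications occurring there are of reduction-rule type at the flat level: the $\sigma_1\sigma_1^{-1}$ cancellations in the resulting $2$-braid become flat $\Omega_2$-moves removing a bigon of nodes, and, the number of nodes being even, the process terminates at the trivial nodeless diagram, with no leftover node and no need for a flat $\Omega_1$- or $\Omega_3$-move. Thus $S$ reduces to a disjoint union of nodeless circles and one nodeless long segment, so it has no irreducible component.

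I expect the main obstacle to be precisely this last step: the reduction rule is far weaker than the full set of flat $\Omega$-moves invoked in Proposition 3.6, so $f$-triviality of $L$ does not by itself yield reducibility. The crux is to exploit the special provenance of the nodes, which are the odd crossings of a knotoid in $S^2$ and hence constrained both by the evenness of their number and by the loop structure described in the proof of Theorem 4.1, in order to force the simplification of $L$ to proceed monotonically through bigon deletions. Equivalently, one must rule out that any state of a knotoid in $S^2$ is equivalent to a flat diagram all of whose complementary regions are triangles or larger; establishing this monotone reducibility completes the proof.
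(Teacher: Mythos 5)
Your proposal does not close the key step, and you say so yourself: everything hinges on showing that the nodes of a parity state can be cancelled using only the reduction rule (bigon deletion) together with isotopy of $S^2$, and your argument delivers only $f$-triviality of the underlying flat diagram via the full set of flat $\Omega$-moves, which, as you note, is strictly weaker. The attempt to extract bigon reductions from the unwinding argument of Proposition 3.6 does not work as stated: that argument is applied to an ascending diagram overlying the flat knotoid (not to the state with its nodes frozen in place), and by Theorem 2.2 the line isotopy translates into an arbitrary finite sequence of $\Omega_1$-, $\Omega_2$- and $\Omega_3$-moves coming from triangle moves, not into a monotone sequence of $\Omega_2$-type bigon cancellations on the fixed node set; the asserted dictionary ``$\sigma_1\sigma_1^{-1}$ cancellation $=$ removable bigon of nodes'' is exactly the unproven crux. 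In addition, your preliminary claim that circular state components carry no nodes rests on two unjustified points: a node may be a crossing shared by two distinct state components (so the evenly-intersticed property of a single classical knot diagram says nothing about it), and the parity of a crossing of $K$ (interleaving in $K$'s Gauss code) is not the same notion as its parity inside the Gauss code of a smoothed state component, so ``smoothing an even crossing does not alter parity'' needs proof and is not obviously true. The parity count (the number of odd crossings is even) is fine, but it does not by itself pair the nodes into reducible bigons.

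For comparison, the paper proves the proposition by a different and complete route: it first reduces to the underlying flat diagram $F(K)$ (the odd crossings of $K$ and of $F(K)$ coincide), then uses Proposition 3.6 to write $F(K)$ as the result of finitely many flat $\Omega$-moves applied to the trivial diagram, and inducts on that sequence. The induction step checks each move type: a flat $\Omega_1$-move adds only an even crossing; a flat $\Omega_2$-move adds either two even crossings (ordinary smoothings) or two odd crossings which sit as the vertices of a bigon and are removed by the reduction rule, while leaving the parity of all other crossings unchanged; a flat $\Omega_3$-move changes no parities and only isotopes the states. Hence at every stage the nodes occur in bigon pairs removable by the reduction rule, which is precisely the ``monotone reducibility'' your sketch leaves open. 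If you want to salvage your approach, you would need to supply an argument of this inductive, move-by-move type (or some other mechanism exploiting where the odd crossings sit) rather than appeal to $f$-triviality of the state's components.
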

\begin{proof}
Let $K$ be a knotoid diagram in $S^2$. If $K$ is a knot-type knotoid diagram then none of its crossings is odd, as a result of Jordan curve theorem. Therefore, there is no graphical coefficient contributing to the parity bracket polynomial of $K$. In fact, the parity bracket polynomial of $K$ coincides with its usual bracket polynomial. On the other hand, proper knotoid diagrams have odd crossings so we observe state components with nodes in their parity state expansion. These state components include only the nodes corresponding to the odd crossings of $K$. It is clear that the set of odd crossings of $K$ is the same with the set of odd crossings of $F(K)$ where $F(K)$ is the underlying flat knotoid diagram of $K$. For this reason, the existence of any irreducible state component in parity states of $K$ would cause an irreducible state component in the parity states of $F(K)$. Thus it is sufficient to show that any graphical state component of a flat knotoid diagram can be reduced to a component that is free of nodes. Proposition 3.6 implies that a flat knotoid diagram in $S^2$ can be obtained from the trivial knotoid diagram by finitely many flat $\Omega$-moves. We induct on the flat knotoid diagrams.

 The trivial knotoid diagram has no crossing so it has no irreducible state component. We assume that graphical state components of all flat diagrams that are obtained from the trivial diagram by an application of $n$ flat $\Omega_i$- moves, can be reduced to a component without any nodes. Let $K$ be such a flat knotoid diagram. A single flat $\Omega_2$-move adds/removes either two odd or even crossings to $K$. Two crossings added/removed do not change the parity of the crossings outside the flat $\Omega_2$-move pattern, as explained in the proof of Theorem 4.4. If the crossings are even crosings then they increase/decrease the number of state components but there is no resulting graphical component. If the crossings (added) are odd crossings, they are located as the vertices of a bigon (they are paired up) that can be eliminated by the reduction rule. Two paired up odd crossings are removed by the move, and since the rest of the odd crossings are assumed to be paired up so that they can be eliminated, there is no irreducible state component created. Thus, a flat classical knotoid diagram which is obtained by applying one $\Omega_2$- move to $K$, does not have any irreducible graphical state components in its parity states.

 The flat $\Omega_3$- move does not add/remove any odd or even crossings or change the parity of the crossings outside the move pattern. Thus, the parity states of a flat classical knotoid diagram obtained by applying one flat $\Omega_3$-move to $K$ are isotopic to the parity states of $K$.

 The flat $\Omega_1$-move adds/removes an even crossing which does not change the parity of the crossings outised the move pattern and does not add any nodes to the diagram, so there are no resulting graphical state components. This completes the induction argument. The parity state components of a flat classical knotoid diagram are reduced to state components that are free of nodes. Therefore, the parity state components of a knotoid diagram in $S^2$ are reducible.
\end{proof}
\begin{lem}
For a knotoid in $S^2$, $K$, we have $<K>_P=<\overline{v}(K)>_P$.
\end{lem}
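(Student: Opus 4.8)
The plan is to prove $<K>_P=<\overline{v}(K)>_P$ by setting up a bijection between the parity states of $K$ and those of $\overline{v}(K)$ which is term-by-term compatible with the state sum $\sum_S A^{n(S)}(-A^2-A^{-2})^{l(S)}G(S)$.

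First I would invoke Lemma~4.2: since $K$ is a knotoid in $S^2$, its Gauss code coincides with that of its virtual closure $\overline{v}(K)$. Hence $K$ and $\overline{v}(K)$ have exactly the same classical crossings with exactly the same parities, and $\overline{v}(K)$ differs from $K$ only in the closure arc, which bears a consecutive string of virtual crossings and no classical crossing. A parity state is formed by choosing an $A$- or $B$-smoothing at each even classical crossing and turning each odd classical crossing into a node; since the classical crossings and their parities are shared, making the same choices for $K$ and for $\overline{v}(K)$ produces a bijection $S\leftrightarrow\overline{S}$. Concretely $\overline{S}$ is $S$ with the two ends of its unique long-segment component $L$ joined by the closure arc. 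Because $n(S)$ is computed purely from the smoothing labels at the even classical crossings, this bijection preserves $n$, i.e. $n(S)=n(\overline{S})$.

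To compare the remaining data I would pass to the underlying four-valent graphs of the parity states. Parity states enter the sum only up to isotopy of $S^2$, detour moves, and the reduction rule; in particular the virtual crossings carried by the closure arc are immaterial to the underlying graph. Thus the graph underlying $\overline{S}$ is precisely the graph underlying $S$ with its long-segment arc $L$ closed into a circle $\widehat{L}$, retaining all of $L$'s nodes, every other component left unchanged. Since $\widehat{L}$ is node-free exactly when $L$ is, the count of node-free components is preserved, $l(S)=l(\overline{S})$, the long-segment component of $S$ corresponding to the circle $\widehat{L}$ of $\overline{S}$. For the graphical coefficients I would use Proposition~4.6: because $K$ is a knotoid in $S^2$, every parity state component of $K$ --- circular or long-segment --- reduces under the reduction rule to a component free of nodes, so $G(S)$ is trivial for every parity state $S$ of $K$; applying the very same sequence of reductions (R2-moves on bigons of nodes, which are local and blind to whether the two ends of $L$ have been glued) to the state $\overline{S}$ reduces $\widehat{L}$ to a node-free circle, so $G(\overline{S})$ is trivial as well. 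Hence $A^{n(S)}(-A^2-A^{-2})^{l(S)}G(S)=A^{n(\overline{S})}(-A^2-A^{-2})^{l(\overline{S})}G(\overline{S})$ for each $S$, and summing over all parity states gives $<K>_P=<\overline{v}(K)>_P$.

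The main obstacle --- and the step I expect to require the most care --- is the appeal to Proposition~4.6 together with the claim that, up to detour moves, the virtual closure arc merely turns the long-segment component into a circle. For a virtual knotoid of positive genus the long-segment component could carry an irreducible node pattern, in which case its circular closure would be a genuinely different graph from $L$ and the equality of graphical coefficients, hence of the polynomials, would fail; so the hypothesis that $K$ lies in $S^2$ (which is exactly what makes Proposition~4.6 applicable) is essential, and I would be careful to phrase the graph-theoretic comparison so that this is transparent.
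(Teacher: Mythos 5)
Your proposal is correct and follows essentially the same route as the paper's proof: both identify the parity states of $K$ and $\overline{v}(K)$ via their common classical crossings and parities, invoke Proposition~4.6 to reduce all nodes, and use detour moves to push the virtual crossings of the closure arc out of the way so the same bigon reductions apply, leaving only the long segment closed into a circle. The only cosmetic difference is that you cite Lemma~4.2 for the shared Gauss code where the paper simply observes that the classical crossings coincide.
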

\begin{proof}
Let $\tilde{K}$ be a classical knotoid diagram representing $K$. 
The classical crossings of $\overline{v}(\tilde{K})$ are the same with the classical crossings of $\tilde{K}$. The skein relations or the reduction rule of the parity bracket polynomial are not applied to the virtual crossings of a virtual knot diagram, so to the virtual crossings of $\overline{v}(\tilde{K})$. By Proposition 4.6, all nodes in a parity state of $\tilde{K}$ are eliminated so that each parity state of $\tilde{K}$ is reduced to consist of disjoint simple closed curves and a long segment component. If any of the virtual crossings of $\overline{v}(\tilde{K})$ passes through a bigon whose vertices are reducible graphical nodes of a parity state of $\tilde{K}$ then we can move these virtual crossings out of the bigon by the detour move that is available for the parity states. After moving the virtual crossings out of the bigon, we can eliminate the nodes by the reduction rule as we do for the parity state of $\tilde{K}$. Therefore, any parity state components of $\overline{v}(\tilde{K})$ can be obtained by connecting the endpoints of the long segment component of the corresponding parity state component of $\tilde{K}$ in the virtual fashion. This shows that $<K>_P=<\overline{v}(K)>_P$.
\end{proof}
\begin{cor}
If there are graphical coefficients in the parity bracket polynomial of a virtual knot $K$ then $K$ is not the virtual closure of a knotoid in $S^2$.
\end{cor}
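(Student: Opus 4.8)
The plan is to prove the contrapositive: if $K$ \emph{is} the virtual closure of a knotoid in $S^2$, then $<K>_P$ carries no graphical coefficients. So suppose $K=\overline{v}(\kappa)$ for some knotoid $\kappa$ in $S^2$, and fix a classical knotoid diagram $\widetilde{\kappa}$ representing $\kappa$. First I would apply Lemma 4.8 to obtain the identity $<\kappa>_P=<\overline{v}(\kappa)>_P=<K>_P$; hence every graphical coefficient $G(S)$ occurring in $<K>_P$ must already occur in $<\kappa>_P$, computed from $\widetilde{\kappa}$.

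Next I would invoke Proposition 4.6: the parity state expansion of the spherical knotoid diagram $\widetilde{\kappa}$ has no irreducible state components, so after the reduction rule every parity state of $\widetilde{\kappa}$ reduces to a disjoint union of node-free circles together with a single node-free long segment component. Consequently each $G(S)$ appearing in $<\kappa>_P=\sum_S A^{n(S)}(-A^2-A^{-2})^{l(S)}G(S)$ is trivial, i.e. $<\kappa>_P$ is an honest Laurent polynomial in $A$. Combining this with the displayed identity, $<K>_P$ likewise has no graphical coefficients, contradicting the hypothesis; therefore no such $\kappa$ exists, which is the assertion.

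There is no real obstacle beyond assembling the two ingredients already in hand — Lemma 4.8 (invariance of the parity bracket under the virtual closure) and Proposition 4.6 (vanishing of irreducible components for knotoids in $S^2$). The one point I would spell out for cleanliness is independence of the chosen diagram: since Theorem 4.4 shows the normalized parity bracket $P_K=(-A^3)^{-\writhe(K)}<K>_P$ is a virtual knot invariant, and the writhe prefactor never contributes a $G(S)$-term, the presence or absence of graphical coefficients in $<K>_P$ depends only on the virtual isotopy class of $K$. Thus the statement ``$K$ is the virtual closure of a knotoid in $S^2$'' may be read up to virtual isotopy without affecting the argument, and the conclusion is genuinely a statement about the virtual knot $K$ rather than about a particular diagram of it.
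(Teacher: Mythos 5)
Your argument is correct and is essentially the paper's own proof: the corollary follows by combining Proposition 4.6 (no irreducible parity state components for knotoid diagrams in $S^2$) with the lemma asserting $<K>_P=<\overline{v}(K)>_P$ (Lemma 4.7 in the paper, which you cite as 4.8), exactly as you do via the contrapositive. Your extra remark on diagram-independence via Theorem 4.4 is a harmless elaboration of the same route.
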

\begin{proof}
It follows by Proposition 4.6 and Lemma 4.7.
\end{proof}
In a virtual knotoid diagram, the parity states are not necessarily reducible. We give a combinatorial explanation for the reducibility of a parity state. We label each edge of a given state of which we illustrate a small portion, in Figure \ref{fig:par}. The nodes that share exactly two edges (labeled as $b$ and $e$ in the figure) form a reducible bigon if and only if the edges appear in the order $e~b~f~c$ during a full tour in the counterclockwise direction around one of the nodes and in the order $b~e~a~d$ around the other node. More precisely, the shared edges $b$ and $e$ appear in cyclic order around the nodes. Note that the detour moves do not change the labels. Then up to detour moves, a parity state of a virtual knotoid diagram will have a removable bigon between the nodes.
\begin{figure}[H]
    \begin{center}
     \begin{tabular}{c}
     \centering  \scalebox{0.4}{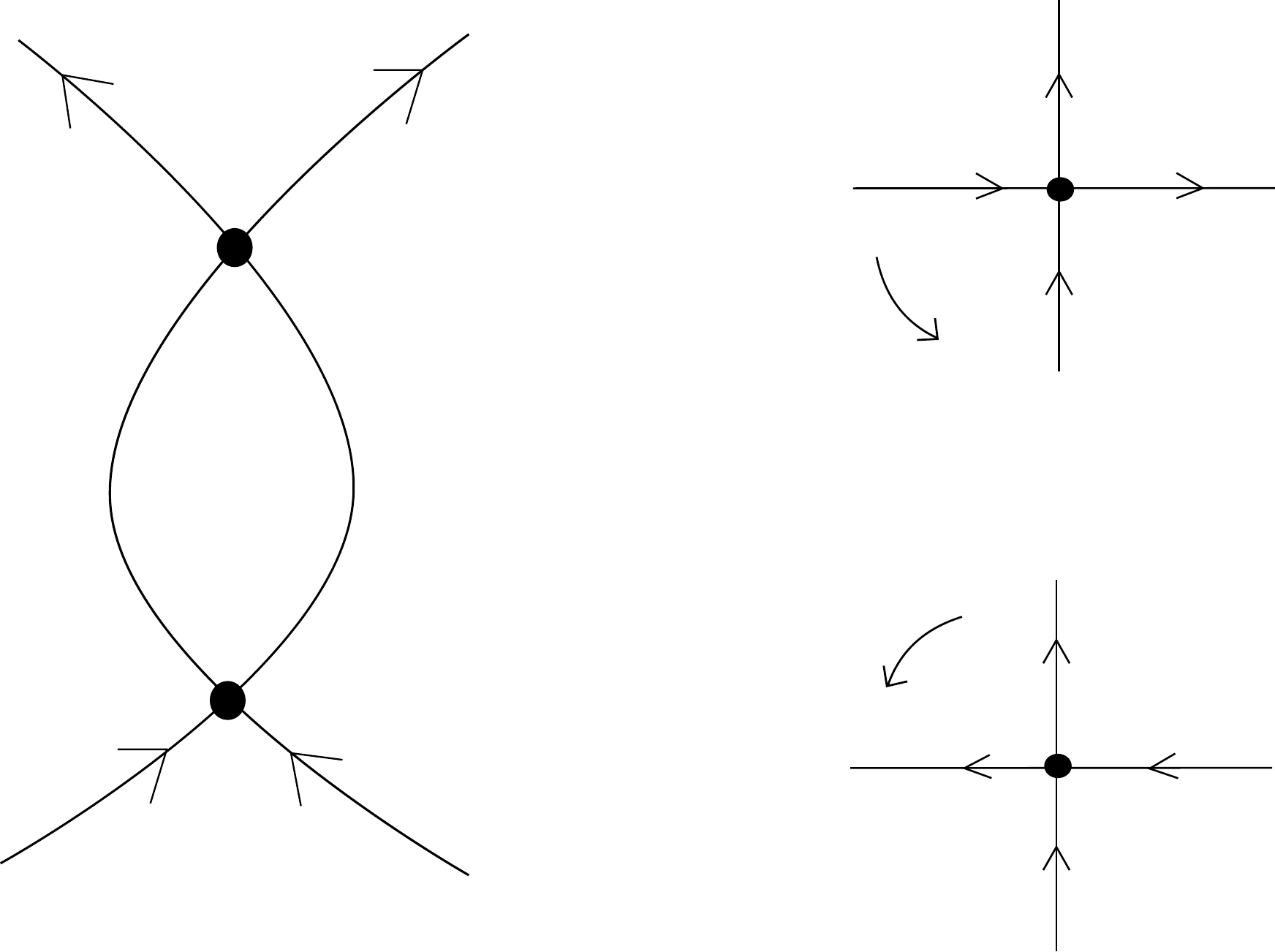}
     \end{tabular}
     \caption{\bf Labels at the nodes of a reducible bigon}
     \label{fig:par}
\end{center}
\end{figure}	
 \begin{example}\normalfont
Both of the classical crossings of the virtual knotoid diagram given in Figure \ref{fig:trivialvc} are odd crossings. There is only one parity state of this diagram that is a graphical state, obtained by replacing these crossings by nodes. As seen in Figure \ref{fig:exlabel}, the two nodes have orders: $a~c~b~d$ and $e~b~d~c$, respectively. Since the shared edges $b$ and $d$ do not appear in the required order, the state is not reducible. We conclude that the parity bracket polynomial consists of one summand that is a graphical coefficient. Thus, the non-triviality of this virtual knotoid whose virtual closure is trivial, is verified by the parity bracket polynomial. Moreover, by Proposition 4.6, this virtual knotoid diagram is not virtually equivalent to a classical knotoid diagram, and in fact, it represents a genus one virtual knotoid.
\end{example}
\begin{figure}[H]
 \begin{center}
     \begin{tabular}{c}
		\centering  \scalebox{0.50}{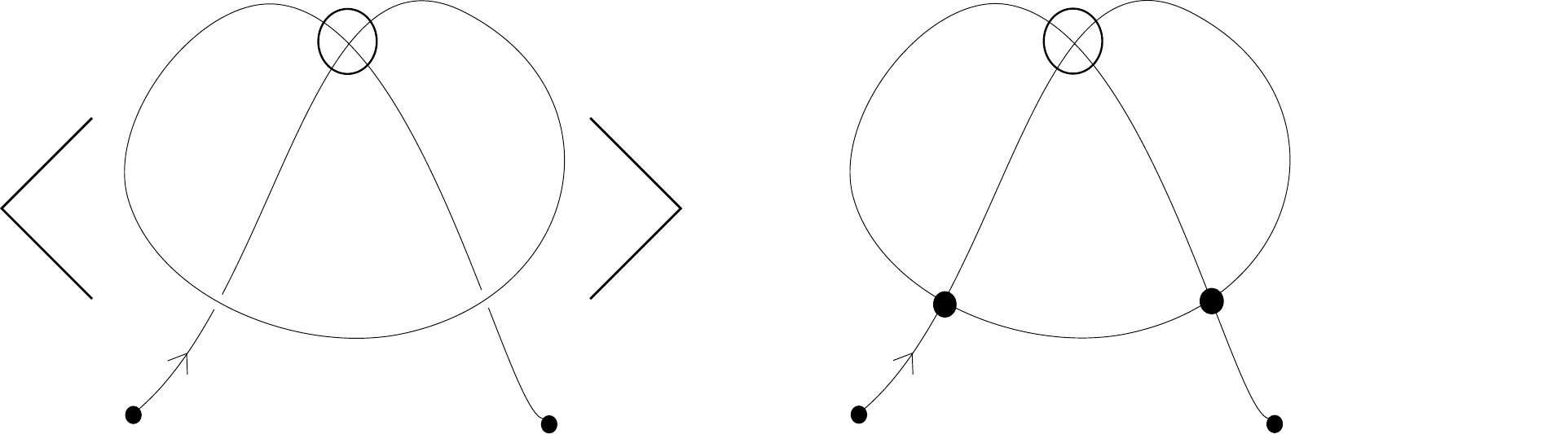}
		\end{tabular}
     \caption{\bf $1^{\text{st}}$ node: $acbd$ $2^{\text{nd}}$ node: $ebdc$ }
     \label{fig:exlabel}
\end{center}
\end{figure}	
 The condition given above that is necessary for the elimination of the nodes of a graphical state component applies in the same way to the flat case. The parity bracket polynomial of the underlying flat diagram of the knotoid diagram given in Figure \ref{fig:trivialvc} is the same as the polynomial of the overlying virtual knotoid, consisting of one graphical coefficient. Therefore, the parity bracket polynomial of this flat virtual knotoid diagram is not trivial. This completes the argument in Section 3.2 that flat virtual knotoids are not necessarily trivial.
\begin{rem}\normalfont
The normalized parity bracket polynomial extends to an invariant for virtual multi-knotoids. Even crossings of a multi-knotoid diagram are smoothed in the usual way. Together with odd crossings, also link crossings (crossings between distinct components) of a multi-knotoid diagram are replaced by graphical nodes. We extend the procedure for calculation of the parity bracket polynomial to include the link crossings as follows. The graphical state components containing the nodes corresponding to link crossings are eliminated by the same reduction rule. Irreducible graphical state components contribute to the polynomial as graphical coefficients. The parity bracket polynomial of a virtual multi-knotoid is defined in the same way by expanding the state summation, and the normalization of the polynomial with writhe is a virtual multi-knotoid invariant.

 We have showed that the graphical components of a classical knotoid diagram are all reduced by the reduction rule, in other words, they are free of nodes. The parity states of a classical multi-knotoid diagram may contain irreducible graphical states. Figure \ref{fig:multii} depicts a multi-knotoid diagram with two components and one link crossing. It is clear that the diagram has a nontrivial parity bracket polynomial that is equal to one graphical coefficient. Thus,  the multi-knotoid represented by this diagram is a nontrivial multi-knotoid. 
\begin{figure}[H]
  \begin{center}
     \begin{tabular}{c}
     \centering  \scalebox{0.50}{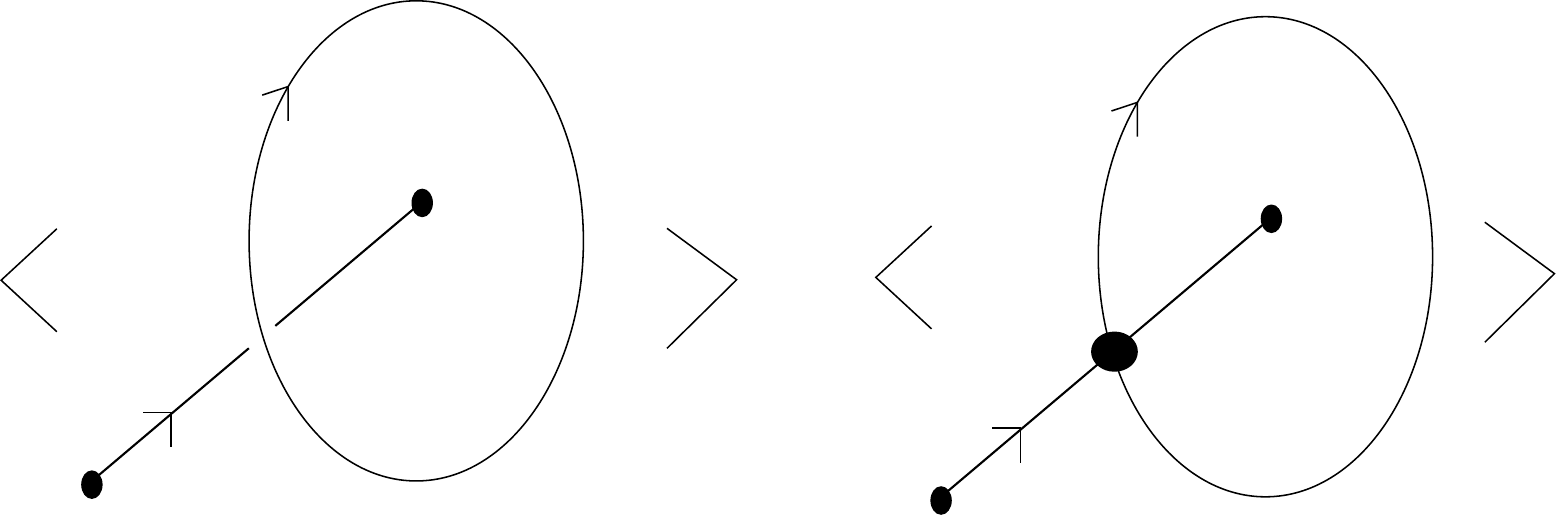}
     \end{tabular}
     \caption{\bf A multi-knotoid with nontrivial parity bracket}
     \label{fig:multii}
\end{center}
\end{figure}	
\end{rem}
 \subsection{Affine Index Polynomial}
 The affine index polynomial was defined for virtual knots and links by L.H. Kauffman \cite{Ka6}. 
The affine index polynomial of knotoids, either classical or virtual, is based on an integer labeling assigned to flat knotoid diagrams in the following way. A flat knotoid diagram, classical or virtual, is associated with a graph (virtual graph in the case of flat virtual diagrams) where the flat classical crossings and the endpoints are regarded as the vertices of the graph. An \textit{arc} of an oriented flat knotoid diagram is an edge of the graph it represents, that extends from one vertex to the next vertex. Note that tail and the head of the diagram are considered to be vertices of the graph). Given a knotoid diagram $K$, the labeling of each arc of the underlying flat knotoid diagram of $K$, $F(K)$, begins with the first arc which connects the tail and the first flat crossing. The integer labeling rule at a flat crossing is illustrated in Figure \ref{fig:lab}. At each flat crossing, the labels of the arcs change by one; if the incoming arc labeled by $a$, $a\in \mathbb{Z}$ crosses the crossing towards left then the next arc is labeled by $a+1$, if the incoming arc crosses the crossing towards right then it is labeled by $a-1$. There is no change of labels at virtual crossings. Note that the numbers at $c$, $w_+(c)$ and $w_-(c)$ are defined as differences of labels so that the weights are well-defined. Since the weights are well-defined up to this integer labeling, it is convenient to label the first arc with $0$.  
\begin{figure}[H]
\begin{center}
     \begin{tabular}{c}
     \centering \scalebox{0.40}{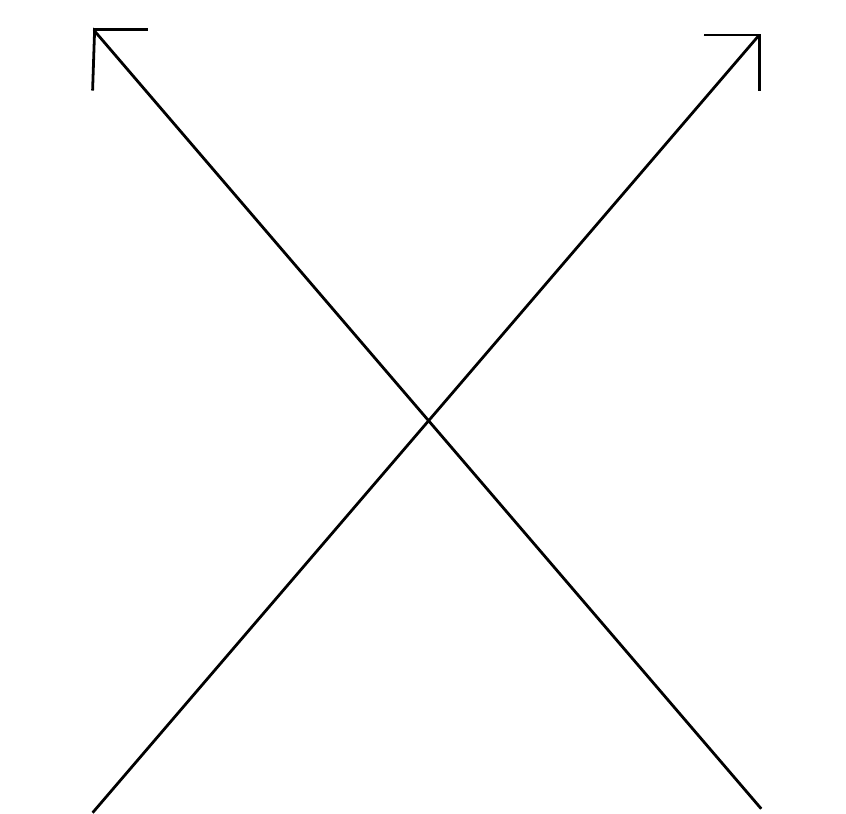}
     \end{tabular}
     \caption{\bf Integer labeling at a flat crossing}
     \label{fig:lab}
\end{center}
\end{figure}
 Let $c$ be a classical crossing of $K$. We define two numbers at $c$ resulting by the labeling of $F(K)$. These numbers that are denoted by $w_+(c)$ and $w_-(c)$, are defined as follows.
\begin{align*}
     w_+(c)&=a-(b+1)\\
     w_-(c)&=b-(a-1),
\end{align*}	
where $a$ and $b$ are the labels for the left and the right incoming arcs at the corresponding flat crossing to $c$, respectively. the numbers $w_+(c)$ and $w_-(c)$ are called \textit{positive} and \textit{negative} weights of $c$, respectively. \\
The\textit{ weight} of $c$ is defined as
\[ w_K(c)=
 \begin{cases}
 w_+(c),&  \text{if the sign of $c$ is a positive crossing} \\
 w_-(c),&  \text{if the sign of $c$ is a negative crossing}.\
 \end{cases}
\]
\begin{definition}\normalfont
 The \textit{affine index polynomial} of a virtual or classical knotoid diagram $K$ is defined by the following equation.
\begin{center}
                        $P_K(t)=\sum_c{\sgn(c)(t^{w_K(c)}-1)}$,
\end{center}												
where the sum is taken over all classical crossings of a diagram of $K$ and $\sgn(c)$ is the sign of $c$.
\end{definition}

  The underlying flat diagram of the virtual closure of a knotoid diagram is labeled as the same as the knotoid diagram since virtual crossings do not add any new arcs or labels. In fact, we have $P_K(t)= P_{\overline{v}(K)}(t)$, where $K$ is a knotoid diagram in $S^2$ and $\overline{v}(K)$ is the virtual closure of $K$.
\begin{thm}
The affine index polynomial is a virtual and classical knotoid invariant.
\end{thm}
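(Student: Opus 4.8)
The plan is to show that $P_K(t)$ is left unchanged by each of the generalized $\Omega$-moves; since those moves contain the ordinary $\Omega$-moves, this proves the virtual statement and the classical statement at once. I would begin with two preliminary observations. First, because a knotoid diagram is a generic immersion of an \emph{interval} (not a circle), the integer labeling of $F(K)$ produced by starting the initial arc at the tail with label $0$ and propagating across flat crossings by the rule of Figure~\ref{fig:lab} is automatically well defined: there is no ``closing up'' constraint to satisfy, unlike in the virtual knot case. Second, every weight $w_K(c)$ is a difference of arc labels, so $P_K(t)$ does not depend on the choice of the label of the first arc. Next I would dispose of all the purely virtual moves --- the virtual $\Omega_{1,2,3}$-moves, the partial virtual move, the $\Omega_v$-move, and anything generated by the detour move. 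Since a virtual crossing produces no change of label and carries no sign, such a move alters neither the labeling of the underlying flat diagram nor the collection of classical crossings together with their signs and weights; hence $P_K(t)$ is untouched.

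It then remains to check the three classical moves, and here the computation is local and essentially the one already carried out for virtual knots in \cite{Ka6}. For $\Omega_1$: the kink creates a single crossing $c$; following the small loop shows the arc leaving the modified disc has the same label as the arc entering it (so labels elsewhere are unchanged), while $w_K(c)=0$, so the new summand is $\sgn(c)(t^{0}-1)=0$. (The ``$-1$'' in the summand exists precisely to absorb the non-invariance of the writhe under $\Omega_1$.) For $\Omega_2$: the move creates two crossings $c_1,c_2$ of opposite sign; one verifies, over the few orientation patterns of the two strands, that each strand exits the disc with the same label it entered with (so exterior labels are unchanged) and that $w_K(c_1)=w_K(c_2)$, so that $\sgn(c_1)(t^{w_K(c_1)}-1)+\sgn(c_2)(t^{w_K(c_2)}-1)=0$. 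For $\Omega_3$: one verifies that the move again preserves the entry/exit labels of the three participating strands and merely permutes the three internal crossings while preserving each crossing's sign and its weight; the partial sum over those crossings is therefore unchanged.

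I expect the $\Omega_3$ case to be the main obstacle --- not conceptually, but as bookkeeping: one must label every internal arc of the triangle, run through each admissible orientation of the three strands, and confirm that all three weights are individually preserved. The $\Omega_1$ and $\Omega_2$ verifications are short by comparison, and the virtual moves are immediate. Alternatively, for a knotoid $K$ in $S^2$ one can sidestep the move analysis altogether: by the identity $P_K(t)=P_{\overline{v}(K)}(t)$ recorded above, together with the facts that $\overline{v}$ is a well-defined map into virtual knots and that the affine index polynomial is a virtual knot invariant \cite{Ka6}, $P_K(t)$ is automatically a knotoid invariant; the same argument with the extended virtual closure map handles virtual knotoids. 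Either route gives invariance under the generalized $\Omega$-moves, and hence under the $\Omega$-moves, completing the proof.
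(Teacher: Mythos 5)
Your main argument is essentially the paper's own proof: the detour-generated virtual moves leave the flat labeling and the set of classical crossings (with signs and weights) untouched, and the local label analysis shows that $\Omega_1$ adds a crossing of weight zero, $\Omega_2$ adds two crossings of opposite sign and equal weight, and $\Omega_3$ permutes crossings while preserving signs and weights. The only differences are cosmetic: the paper trims the orientation bookkeeping by invoking Polyak's minimal generating set of oriented Reidemeister moves, and your alternative shortcut via $P_K(t)=P_{\overline{v}(K)}(t)$ and the virtual-knot invariance of the affine index polynomial is also sound, matching the identity the paper records just before the theorem.
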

\begin{proof} 
The polynomial $P_K$, by its definition, is independent of the moves generated by the detour move. It is left to check the invariance under oriented $\Omega$- moves. Note that for the verification of invariance of oriented virtual knot invariants, it is sufficient to check the oriented Reidemeister moves, given in Figure \ref{fig:3}, two types of the first move, one type of the second move and one type of the third move where there is a cyclic triangle in the middle and two of the crossings have the same sign and the third crossing has the opposite sign \cite{Po}. The reader can verify easily that this argument applies directly to virtual knotoid invariants. The integer labeling is uniquely inherited under these moves. The local changes (inside the disks where the move pattern lies) in labels is shown in Figure \ref{fig:3}. It can be seen in the figure that the $\Omega_1$-move adds a crossing with zero weight. The $\Omega_2$-move adds/removes two crossings with opposite signs but with same weights. The $\Omega_3$-move does not change weights or signs of the three crossings in the move pattern. Therefore, the affine index polynomial remains unchanged under these moves. 
\end{proof}
\begin{figure}[H]
  \begin{center}
     \begin{tabular}{c}
     \centering  \scalebox{0.51}{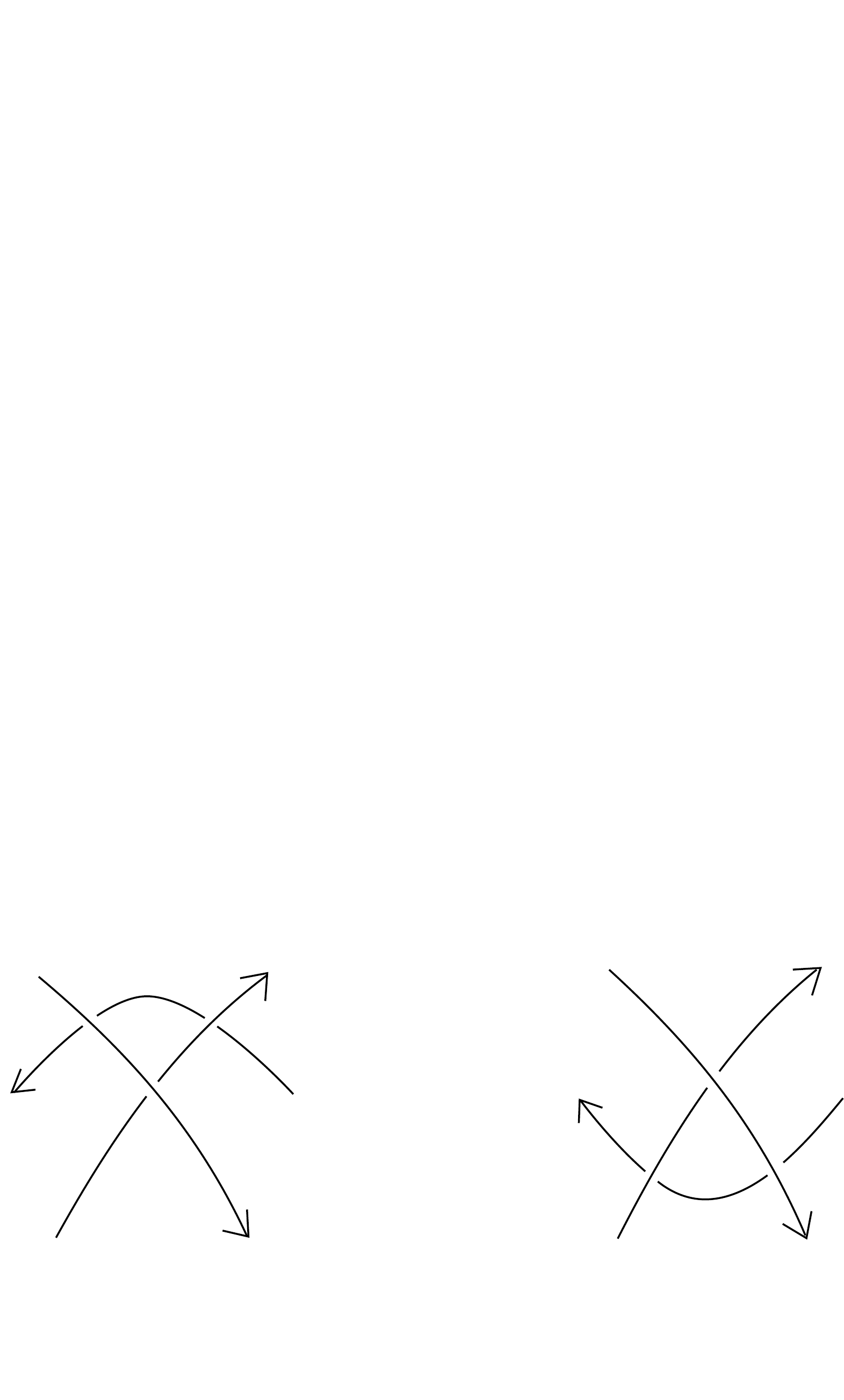}
     \end{tabular}
		\vspace{0.6cm}
     \caption{\bf The invariance of the affine index polynomial under the oriented moves}
     \label{fig:3}
\end{center}
\end{figure}
\subsection{A comparison of affine index polynomials: Knotoids vs Knots}
 The affine index polynomial of a knotoid in $S^2$ is the same as the affine index polynomial of its virtual closure that is a virtual knot as we noted before. We show in the following that the affine index polynomial has different properties for classical knotoids than the polynomial has for virtual knots.
\begin{enumerate}
\item
Knotoids in $S^2$ may have nontrivial affine index polynomial although the polynomial is trivial for all classical knots. All the crossings of a knot-type knotoid diagram are even by Theorem 4.1, and so the weights of the crossings are zero. A knot-type knotoid can always be represented by a knot-type knotoid diagram, therefore knot-type knotoids have trivial affine index polynomial. On the other hand, proper knotoids have odd crossings with nonzero weights. A proper knotoid may have nontrivial affine index polynomial. This difference is used to determine whether a knotoid is proper or knot-type knotoid: If a given classical knotoid diagram has nonzero affine index polynomial, then we conclude that this knotoid diagram represents a proper knotoid.\\
\item
The \textit{inverse} of an oriented virtual knot diagram is obtained by reversing the orientation of the diagram. For the affine index polynomial of a virtual knot $k$, we have
\begin{center}
          $P_K(t)=P_{\overline{K}}(t^{-1})$,
\end{center}	
where $K$ is an oriented diagram of $k$ and $\overline{K}$ is the inverse of $K$ \cite{Ka6}. Thus the affine index polynomial can be used to distinguish a virtual knot from its inverse.

 The \textit{inverse} of a knotoid diagram (classical or virtual) $K$ is defined by the reversing the orientation of the diagram, and denoted by $\overline{K}$. The tail of $K$ becomes the head of the inverse of $K$. The affine index polynomial fails to distinguish a knotoid diagram from its inverse as we explain in the following.
\end{enumerate} 
\begin{definition}\normalfont
The weights of crossings of a knotoid diagram $K$ are said to be \textit{symmetric} if for any classical crossing of $K$, $c_1$ with a nonzero positive weight $w_+(c_1)$, there is another classical crossing $c_2$ with a nonzero positive weight $w_+(c_2)$ such that $w_+(c_2)$=$-w_+(c_1)$. Such two crossings with opposite positive weights are said to be \textit{paired} crossings.
\end{definition}
\begin{lem}
\label{symweight}
The weights of the crossings of a flat knotoid diagram in $S^2$ are symmetric.
\end{lem}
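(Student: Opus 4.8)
The plan is to run an induction that simplifies the diagram to the trivial one, using Proposition 3.6, while tracking the behaviour of the positive weights under each elementary move. For a flat knotoid diagram $D$ in $S^2$ write $W_+(D)$ for the multiset $\{\,w_+(c) : c\text{ a crossing of }D,\ w_+(c)\neq 0\,\}$; the assertion ``$D$ has symmetric weights'' is exactly that $W_+(D)$ is invariant under $v\mapsto -v$. By Proposition 3.6 the given diagram is connected to the trivial flat knotoid diagram by a finite sequence of flat $\Omega$-moves and isotopies of $S^2$, and the trivial diagram has no crossings, so $W_+$ is empty and hence vacuously symmetric there. It therefore suffices to show that each flat $\Omega$-move and each isotopy of $S^2$ preserves symmetry of $W_+$, and then invoke the reducing sequence.

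I would carry out the move-by-move analysis as follows, reading the arc labels off Figure~\ref{fig:lab} and Figure~\ref{fig:3}. An isotopy of $S^2$ changes the arc labelling only by a global additive constant (arising from the choice of first arc), so it fixes every $w_+(c)$, a difference of arc labels. A flat $\Omega_1$-move inserts or deletes a single crossing whose positive weight is $0$, and does not alter the label of any arc outside the move disk, since the kink is entered and left with the same label; hence $W_+$ is unchanged. A flat $\Omega_3$-move permutes three crossings, leaving all three of their weights and all arc labels outside the move disk unchanged, so $W_+$ is unchanged. A flat $\Omega_2$-move inserts or deletes the two crossings $c_1,c_2$ of a bigon; tracing the labels through the two crossings shows that each participating strand leaves the move disk with the same label it entered with (so the labels in the complement are unaffected) and that $w_+(c_1)=-w_+(c_2)$. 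Consequently $W_+$ changes only by adjoining or deleting a pair $\{v,-v\}$ (with $v=0$ permitted), which preserves symmetry.

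The only step that is not immediate is the $\Omega_2$-computation, i.e. the identity $w_+(c_1)=-w_+(c_2)$ for the two crossings of a newly created bigon. This is a short finite check over the local configurations of the bigon using the labelling convention of Figure~\ref{fig:lab}; it is essentially the computation already carried out for $\Omega_2$-invariance of the affine index polynomial (see Figure~\ref{fig:3}), and in fact it can be read off from that data, since the two bigon crossings have opposite signs and equal signed weight $w_K$, while $w_+=-w_-$ and $w_K$ equals $w_+$ on a positive crossing and $w_-$ on a negative one. Granting this local identity, the induction over the reducing sequence furnished by Proposition 3.6 completes the proof; the bookkeeping of the four move-types is the only real content, the conceptual input being entirely supplied by Proposition 3.6.
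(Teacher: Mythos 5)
Your proof is correct and follows essentially the same route as the paper's: invoke Proposition 3.6 to connect the diagram to the trivial one by flat $\Omega$-moves, then check move-by-move that $\Omega_1$ contributes a zero-weight crossing, $\Omega_3$ preserves all weights, and $\Omega_2$ creates or deletes a pair of crossings with opposite positive weights (your identity $w_+(c_1)=-w_+(c_2)$, via $w_-=-w_+$, is exactly the local computation underlying the paper's pairing argument). Your multiset formulation of symmetry is a slightly cleaner way of doing the same bookkeeping, but the substance of the argument is identical.
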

\begin{proof}
Proposition 3.6 implies that any flat knotoid diagram in $S^2$ can be obtained from the trivial knotoid diagram by a finite sequence of the flat $\Omega$- moves, and also by isotopy of $S^2$. Using this fact, we proceed by induction on flat knotoid diagrams in $S^2$. The trivial diagram has no crossings so conventionally it satisfies the lemma. The flat diagrams shown in Figure \ref{fig:ind}, are obtained by applying one or two $\Omega_i$-moves to the trivial knotoid diagram. The weights of crossings of these diagrams are symmetric, as can be seen in the figure. Let us assume that the weights of crossings of any flat classical knotoid diagrams that are obtained by applying $n>0$ flat $\Omega_i$- moves to the trivial knotoid diagram are symmetric. Let $K$ be such a flat knotoid diagram and $K_1, K_2, K_3$ be flat knotoid diagrams that are obtained by applying one flat $\Omega_1$, $\Omega_2$ and $\Omega_3$-move to $K$, respectively. A flat $\Omega_2$ move adds/removes two crossings to $K$. Since the weights of the other crossings outside the move region are not affected, the symmetry of weights of $K$ is not destroyed. If the crossings located in the move region are even then they both have zero weights. If the crossings are odd then they are paired crossings. Thus, the weights of crossings of $K_2$ are symmetric.
 
 A flat $\Omega_3$- move does not change the weights of the three crossings, $A,B,C$ that are located in the triangular region of the move or the weights of the crossings outside the move region. If $A,B,C$ are even crossings then they have zero weight and they are taken to crossings with zero weight by a flat $\Omega_3$-move. If two of these crossings are odd and one of them is even, it is assumed that the odd crossings are paired with some other crossings of $K$ (either two of them with each other or with other crossings in the rest of the diagram). Thus the weights of $K_3$ are symmetric. A flat $\Omega_1$- move adds/removes one crossing with zero weight to the given diagram $K$ and does not change the weights of the remaining crossings. Therefore, the weights of the crossings of $K_1$ are symmetric. This completes the induction and proves that the weights of crossings of any flat knotoid diagram in $S^2$ are symmetric.
\end{proof}
\vspace{-0.64cm}
\begin{figure}[H]
\begin{tabular}{c}
\centering  \scalebox{0.57}{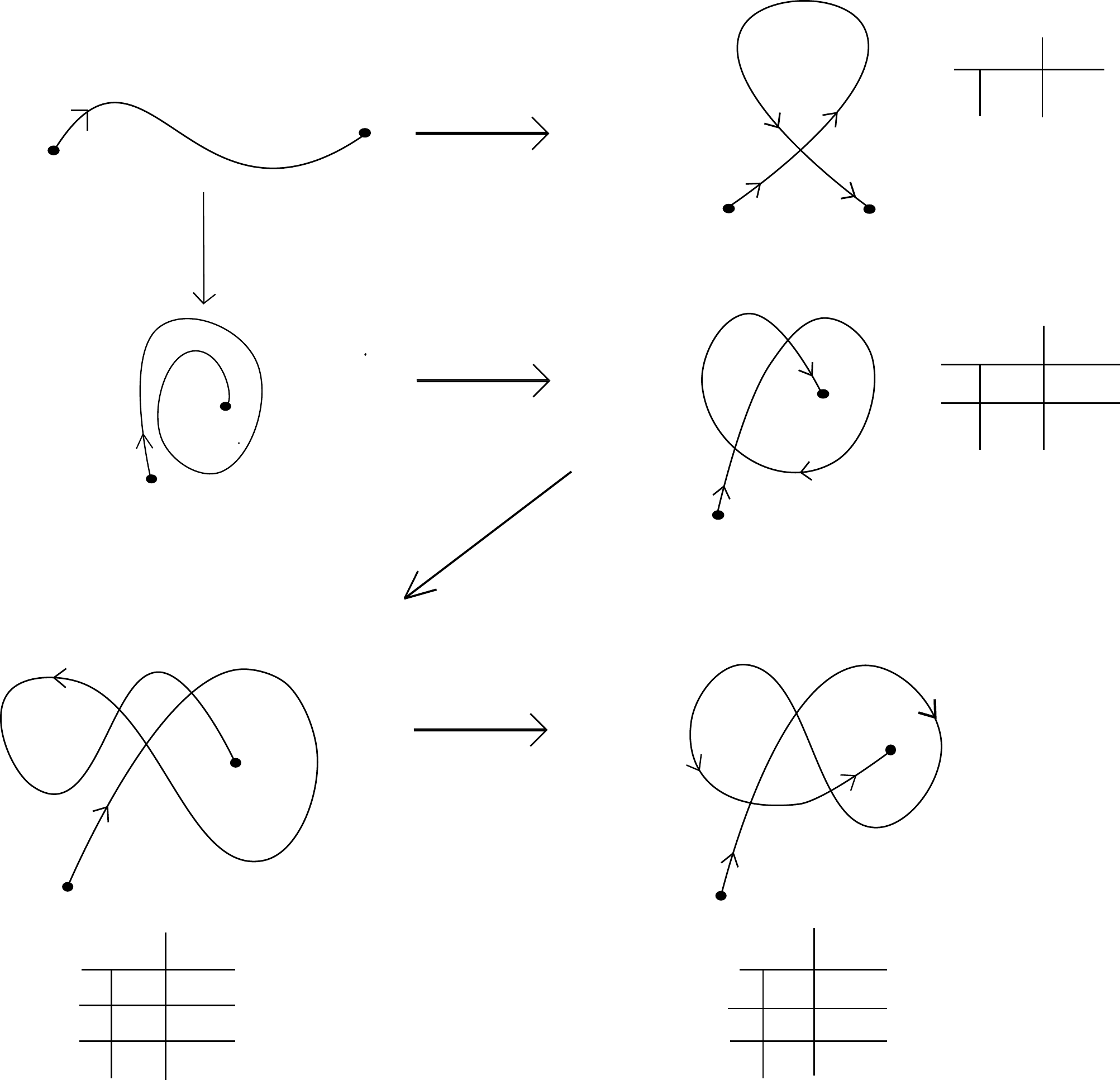}
		\end{tabular}
			 \caption{\bf Induction step}
         \label{fig:ind}
\end{figure}
\begin{thm}
\label{sym}
The affine index polynomial of a knotoid $K$ in $S^2$ is symmetric with respect to $t\leftrightarrow t^{-1}$. Therefore, $P_K(t)=P_{\overline{K}}(t)$, where $\overline{K}$ denotes the inverse of $K$. 
\end{thm}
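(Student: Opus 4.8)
The plan is to derive the theorem from the symmetry of flat weights already established in Lemma~\ref{symweight}, by a short manipulation of the defining formula $P_K(t)=\sum_c\sgn(c)\bigl(t^{w_K(c)}-1\bigr)$. Two elementary facts set this up. First, for \emph{every} classical crossing $c$ of \emph{any} knotoid diagram,
\[
w_-(c)=b-(a-1)=-\bigl(a-(b+1)\bigr)=-w_+(c),
\]
straight from the definitions. Second, the positive weights $w_+(c)$ depend only on the integer labelling of the underlying flat diagram $F(K)$; and if $K$ is a knotoid in $S^2$ then $F(K)$ is a flat knotoid diagram in $S^2$, so Lemma~\ref{symweight} says that the multiset of its nonzero positive weights is invariant under negation.

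First I would split the state sum according to the signs of the crossings and use $w_K(c)=w_+(c)$ at positive crossings and $w_K(c)=w_-(c)=-w_+(c)$ at negative crossings, obtaining
\[
P_K(t)=\sum_{\sgn(c)=+1}\bigl(t^{w_+(c)}-1\bigr)-\sum_{\sgn(c)=-1}\bigl(t^{-w_+(c)}-1\bigr).
\]
Substituting $t\mapsto t^{-1}$ and subtracting, the $-1$ terms cancel and the two families of crossings contribute to the difference with the same sign:
\begin{align*}
P_K(t)-P_K(t^{-1})
&=\sum_{\sgn(c)=+1}\bigl(t^{w_+(c)}-t^{-w_+(c)}\bigr)+\sum_{\sgn(c)=-1}\bigl(t^{w_+(c)}-t^{-w_+(c)}\bigr)\\
&=\sum_{c}\bigl(t^{w_+(c)}-t^{-w_+(c)}\bigr),
\end{align*}
where the final sum runs over all classical crossings of $K$.

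Next I would finish by invoking Lemma~\ref{symweight}. The map $w\mapsto t^{w}-t^{-w}$ is odd and vanishes at $w=0$, and the multiset $\{w_+(c)\}_c$ of positive weights of the flat diagram $F(K)$ in $S^2$ is symmetric under negation; hence, pairing each nonzero $w_+(c_1)$ with a crossing of weight $-w_+(c_1)$ makes the displayed sum vanish term by term. Therefore $P_K(t)=P_K(t^{-1})$, which is the asserted symmetry. The ``therefore'' clause then follows from the orientation-reversal identity $P_{\overline K}(t)=P_K(t^{-1})$, which holds for virtual knots by \cite{Ka6} and carries over to a knotoid $K$ in $S^2$ via $P_K=P_{\overline{v}(K)}$ because the virtual closure commutes with reversing the orientation; combining it with the symmetry just proved gives $P_{\overline K}(t)=P_K(t^{-1})=P_K(t)$.

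The computation above is short because all the work sits in Lemma~\ref{symweight}, which is already proved: it rests on Proposition~3.6 (every flat knotoid in $S^2$ is $f$-equivalent to the trivial knotoid) together with an induction on the number of flat $\Omega$-moves, tracking the effect of each $\Omega_i$ on the weights. So the only real obstacle has been dealt with upstream. It is worth noting that the identity $P_K(t)-P_K(t^{-1})=\sum_c\bigl(t^{w_+(c)}-t^{-w_+(c)}\bigr)$ holds verbatim for any classical or virtual knotoid; what is special about $S^2$ is only that its right-hand side vanishes, and this can fail for knotoids in $\mathbb{R}^2$ or for virtual knotoids, exactly because there the underlying flat diagram need not be trivial.
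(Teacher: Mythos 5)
Your proof is correct, and it leans on exactly the same engine as the paper's own argument, namely Lemma~\ref{symweight}; the differences are in the bookkeeping. The paper argues crossing-by-crossing: it pairs crossings of opposite positive weight and checks cases on signs (opposite signs cancel outright inside $P_K$, equal signs contribute the symmetric combination $t^{n}+t^{-n}-2$), whereas you fold all of this into the single identity $P_K(t)-P_K(t^{-1})=\sum_c\bigl(t^{w_+(c)}-t^{-w_+(c)}\bigr)$, valid for any (classical or virtual) knotoid diagram, and then invoke the weight symmetry once; this removes the sign case analysis and isolates precisely where the $S^2$ hypothesis enters, which is a nice clarification. Note that both you and the paper actually use the pairing/multiset form of the symmetry (each weight $n$ matched with a weight $-n$, with multiplicity), which is slightly stronger than the literal ``there exists a crossing of opposite weight'' in the definition of symmetric weights, but it is what the induction in Lemma~\ref{symweight} establishes, so you are on the same footing as the paper. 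For the second clause, the paper observes directly that reversing orientation merely permutes the weight chart, while you route through the virtual-knot identity $P_{\overline K}(t)=P_K(t^{-1})$ of \cite{Ka6} together with $P_K=P_{\overline{v}(K)}$ and compatibility of the virtual closure with orientation reversal; both are legitimate, the paper's being more self-contained and yours importing a known fact at the cost of checking that $\overline{v}$ commutes with reversal.
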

\begin{proof}
Lemma \ref{symweight} shows that any crossing of a knotoid diagram in $S^2$ with a nonzero positive weight, is paired with another crossing. If the signs of paired crossings are different then the contributions of these crossings to the polynomial are canceled out. Let $c_1$ and $c_2$ be two paired crossings with the same sign, then they contribute to the polynomial either as the summands $(t^n-1)$ and $t^{-n}-1$ or $-t^n+1$ and $-t^{-n}+1$, respectively, where $n$ is the weight of $c_1$ and $-n$ is the weight of $c_2$. Since the affine index polynomial is a classical knotoid invariant, the symmetry of the affine index polynomial follows. It can be verified easily by the reader that reversing the orientation of $K$ only permutes the set of crossings and the weight chart of $K$. The affine index polynomial remains the same by reversing the orientation of $K$. Therefore we have $P_K(t)=P_K(t^{-1})=P_{\overline{K}}(t^{-1})=P_{\overline{K}}(t)$.
\end{proof}
 There are virtual knotoids which do not satisfy Theorem 4.10. For instance, any virtual knotoid diagram whose underlying flat diagram is shown in Figure \ref{fig:counterex}, has non-symmetric affine index polynomial, by its weight chart. Consequently, none of these virtual knotoid diagrams is virtually equivalent to a classical knotoid diagram. 
\begin{figure}[H]
\begin{center}
\begin{tabular}{c}
\centering  \scalebox{0.55}{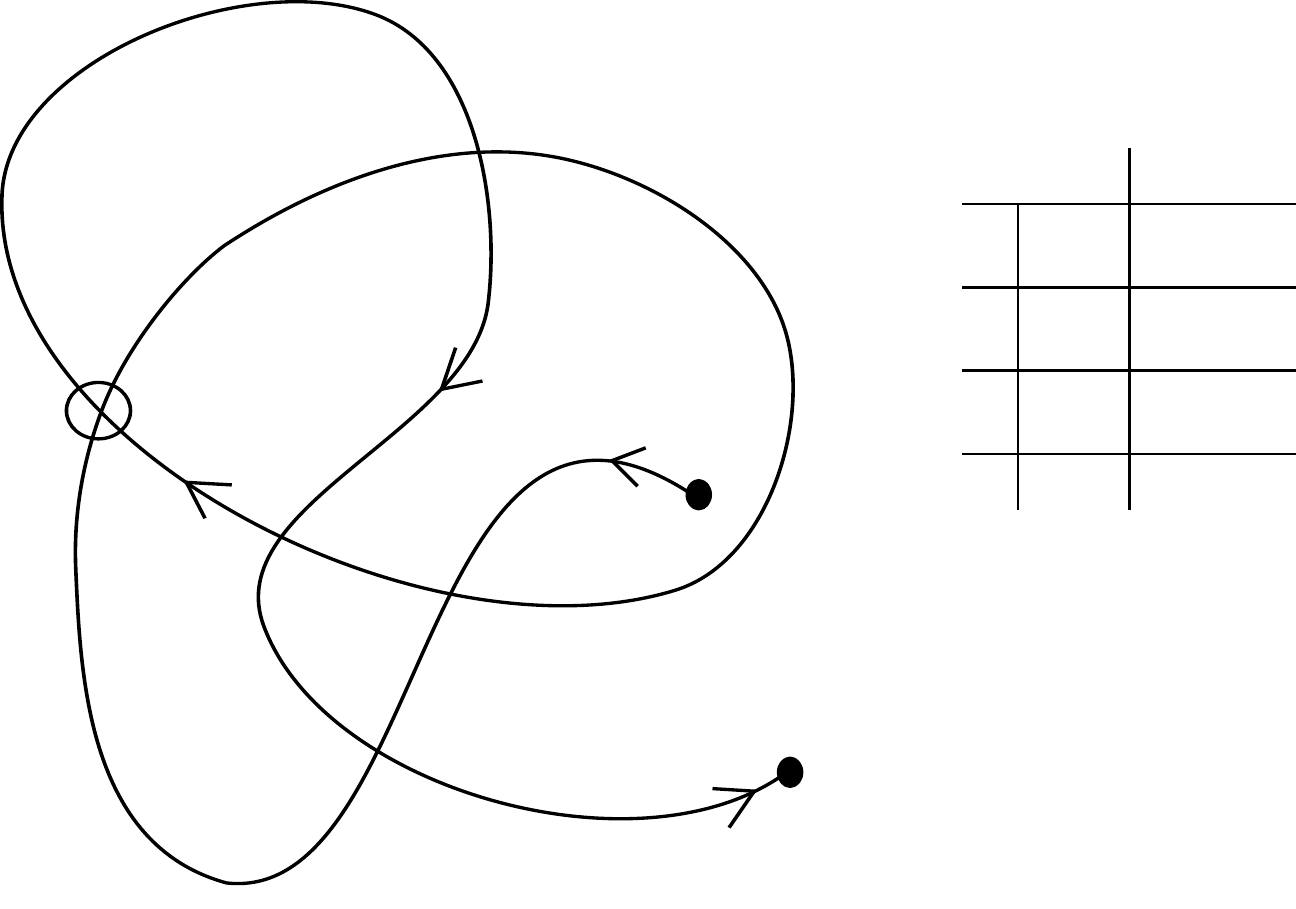}
\end{tabular}
\caption{\bf A flat virtual knotoid with non-symmetric weights}
\label{fig:counterex}
\end{center}
\end{figure}
 \begin{thm}
If the affine index polynomial of a virtual knot is not symmetric with respect to $t\leftrightarrow t^{-1}$ then it is not the virtual closure of a knotoid in $S^2$. 
\end{thm}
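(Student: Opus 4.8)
The plan is to prove this by contraposition, reducing the whole statement to Theorem \ref{sym} together with the identity relating the affine index polynomial of a knotoid to that of its virtual closure. Suppose $k$ is a virtual knot that lies in the image of the virtual closure map, say $k=\overline{v}(K)$ for some knotoid $K$ in $S^2$. The first step is to recall that, since virtual crossings contribute no new arcs and no new integer labels to the underlying flat diagram, the labeling of $F(K)$ coincides with the labeling of $F(\overline{v}(K))$, and every classical crossing of $K$ retains its sign and its weight under the closure. Hence $P_K(t)=P_{\overline{v}(K)}(t)=P_k(t)$, exactly as noted just before Theorem \ref{sym}. This uses only that the virtual closure map is well defined (Section 3.3), so that the equality descends from diagrams to equivalence classes.

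The second step is to invoke Theorem \ref{sym}: the affine index polynomial of any knotoid in $S^2$ is invariant under $t\leftrightarrow t^{-1}$. Applying this to $K$ gives $P_K(t)=P_K(t^{-1})$; combining with the first step yields $P_k(t)=P_k(t^{-1})$, i.e.\ the affine index polynomial of $k$ is symmetric with respect to $t\leftrightarrow t^{-1}$. Taking the contrapositive: if $P_k(t)$ fails to be symmetric, then $k$ cannot be written as $\overline{v}(K)$ for any knotoid $K$ in $S^2$, which is the assertion.

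Since Theorem \ref{sym} is already established (its proof rests on Lemma \ref{symweight}, the symmetry of the weight chart of flat knotoid diagrams in $S^2$, via the induction over flat $\Omega$-moves furnished by Proposition 3.6), there is no substantial obstacle here; this is essentially a corollary. The only point requiring care is that both ingredients --- weight-preservation under $\overline{v}$ and the $t\leftrightarrow t^{-1}$ symmetry --- must be applied at the level of equivalence classes rather than fixed diagrams, which is legitimate because both the virtual closure map and the affine index polynomial are invariants. As a closing remark one may note that this yields a concrete obstruction: any virtual knot whose underlying flat diagram is the one in Figure \ref{fig:counterex} has a visibly non-symmetric weight chart, hence a non-symmetric affine index polynomial, and is therefore not the virtual closure of any classical knotoid in $S^2$.
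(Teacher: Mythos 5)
Your proposal is correct and follows essentially the same route as the paper: both rest on the identity $P_K(t)=P_{\overline{v}(K)}(t)$ (virtual crossings add no arcs or labels, so weights and signs are unchanged under closure) combined with the symmetry of the affine index polynomial for knotoids in $S^2$ from Theorem \ref{sym}, read contrapositively. Your added remarks about working at the level of equivalence classes and the example of Figure \ref{fig:counterex} are consistent with the paper but do not change the argument.
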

\begin{proof}
The affine index polynomial remains unchanged by the virtual closure map since the virtual crossings added via the map, do not change the weights of any of the (classical) crossings and have no contribution to the polynomial.
Thus, we have $P_K(t)= P_{\overline{v}(K)}(t)$, where $K$ is a knotoid in $S^2$. The statement follows by this equality and by Theorem 4.10.
\end{proof}
\subsection{The height of a knotoid and the affine index polynomial}
 The \textit{height} (or the \textit{complexity} with respect to Turaev's terminology in \cite{Tu}) of a knotoid diagram in $S^2$ is the minimum number of crossings that a shortcut creates during the underpass closure. The \textit{height of a knotoid} in $S^2$, $K$ is defined as the minimum of the heights, taken over all equivalent classical knotoid diagrams to $K$ and is denoted by $h(K)$. The height is an invariant of  knotoids in $S^2$ \cite{Tu}. A knotoid in $S^2$ is of knot-type if and only if its height is zero or equivalently a knotoid in $S^2$ has nonzero height if and only it is a proper knotoid \cite{Tu}.

 It is often hard to compute the height with an attempt of direct computation, for we should take into account all the equivalent knotoid diagrams. The affine index polynomial provides the following estimation for the height.
\begin{thm}
Let $K$ be a knotoid in $S^2$.
The height of $K$ is greater than or equal to the maximum degree of the affine index polynomial of $K$.
\end{thm}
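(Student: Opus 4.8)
\emph{Plan.} The plan is to deduce the height bound from a purely local estimate on the weights $w_K(c)$. Since the affine index polynomial is a knotoid invariant, $\deg P_K$ (the top power of $t$ with nonzero coefficient) equals $\deg P_D$ for every diagram $D$ representing $K$; and from the defining formula $P_D(t)=\sum_c\sgn(c)(t^{w_K(c)}-1)$ one sees that either $P_D$ has degree $\le 0$ or $\deg P_D\le\max_c w_K(c)\le\max_c|w_K(c)|$. Hence it suffices to prove the following local statement and then minimize over diagrams: \emph{for any knotoid diagram $D$ in $S^2$, any classical crossing $c$ of $D$, and any shortcut $\gamma$ of $D$, one has $|w_K(c)|\le|\gamma\cap D|$}, where $|\gamma\cap D|$ counts the transversal intersection points of $\gamma$ with $D$. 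Granting this, choosing $\gamma$ so that $|\gamma\cap D|$ realizes $h(D)$ gives $\max_c|w_K(c)|\le h(D)$, so $\deg P_K=\deg P_D\le h(D)$ for every $D$, and therefore $\deg P_K\le\min_D h(D)=h(K)$.

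\emph{The local estimate.} To prove $|w_K(c)|\le|\gamma\cap D|$ I would close the knotoid up: let $\widehat D=D\cup\gamma$ be the connected closed flat diagram obtained by joining the head of $D$ to its tail along $\gamma$, drawn in $S^2$ with only classical crossings (the self-crossings of $D$ together with the $|\gamma\cap D|$ points of $\gamma\cap D$). A connected closed curve diagram drawn in the plane has $w_+(\cdot)\equiv 0$ on all of its crossings — the shadow-level form of the vanishing of the affine index polynomial on classical knots — so at every crossing of $\widehat D$, in particular at $c$, the labels of the left and right incoming arcs in the integer labeling $\lambda_{\widehat D}$ of $\widehat D$ differ by exactly $1$. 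The key step is to compare $\lambda_{\widehat D}$ with the intrinsic labeling $\lambda_D$ that computes $w_K(c)$. Traversing $D$ from the tail, both labelings begin at $0$ and, at every self-crossing of $D$, change by the \emph{same} $\pm1$ (the local picture, hence the labeling rule, is identical), whereas $\lambda_{\widehat D}$ changes by an extra $\pm1$ at each point of $\gamma\cap D$ and $\lambda_D$ does not. Thus $\Delta:=\lambda_{\widehat D}-\lambda_D$ is locally constant away from $\gamma\cap D$ (note that the arcs of $\widehat D$ refine the arcs of $D$, so $\Delta$ really is well defined on the arcs of $\widehat D$), and along $D$ it changes by $\pm1$ precisely at the points of $\gamma\cap D$.

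\emph{Conclusion, and the main obstacle.} Fix $c$ and let $\Delta_1,\Delta_2$ be the values of $\Delta$ just before the first and the second passage of $c$ in the traversal of $D$; the two incoming arcs at $c$ are exactly these two passages. Substituting $\lambda_D=\lambda_{\widehat D}-\Delta$ into $w_+(c)=\lambda_D(\text{left in})-\lambda_D(\text{right in})-1$ and using that the analogous expression built from $\lambda_{\widehat D}$ vanishes, one obtains $w_+(c)=\pm(\Delta_2-\Delta_1)$. But $\Delta_2-\Delta_1$ is the total variation of $\Delta$ between the two passages of $c$, i.e.\ a signed count of exactly those points of $\gamma\cap D$ that lie on the loop $L_c$ of $D$ based at $c$; hence $|w_K(c)|=|w_+(c)|=|\Delta_2-\Delta_1|\le|\gamma\cap L_c|\le|\gamma\cap D|$, which is the local estimate. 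I expect this comparison to be the crux of the argument: the observation that closing the knotoid along a shortcut perturbs the affine labeling only through a running tally of shortcut-crossings, and that this tally, evaluated at the two occurrences of a fixed crossing, is controlled by the number of shortcut-crossings on the corresponding loop. The remaining points are bookkeeping — that $\widehat D$ is genuinely an honest planar closed diagram (so the ``$w_+\equiv0$'' input applies) and that ``left in'' and ``right in'' at $c$ denote the same arc-ends in $D$ and in $\widehat D$; the symmetry $P_K(t)=P_K(t^{-1})$ of Theorem~\ref{sym} is consistent with, but not needed for, the inequality above.
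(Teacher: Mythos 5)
Your proposal is correct, but it takes a genuinely different route from the paper. The paper fixes a crossing realizing the top degree $m$, interprets its weight as the algebraic intersection number of the loop at that crossing with the strands meeting it, Seifert-smooths the self-crossings of that loop into disjoint circles plus a long segment carrying the two endpoints, bounds $|m|$ by the number of circles separating tail from head, and then invokes the Jordan curve theorem to force any shortcut to cross each separating circle, giving $h(\tilde K)\geq m$ diagram by diagram; invariance of $P_K$ finishes the argument. You instead prove the uniform local inequality $|w_D(c)|\leq|\gamma\cap D|$ for every diagram $D$, crossing $c$ and shortcut $\gamma$, by closing $D$ along $\gamma$ and comparing the two affine labelings: the defect $\Delta$ jumps by $\pm1$ exactly at the points of $\gamma\cap D$, so $w_+(c)=\pm(\Delta_2-\Delta_1)$ is a signed count of shortcut crossings along the loop at $c$, and the minimization over $\gamma$ and $D$ is then immediate. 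This is a clean repackaging, and it yields the bound without the smoothing construction; what it buys is a sharper-looking statement (every weight is bounded by every shortcut's crossing count), while the paper's version is self-contained and its picture of circles separating the endpoints parallels the later arrow-polynomial bound. The one point you should shore up is your key input: you need the crossing-by-crossing vanishing $w_+\equiv 0$ for a connected closed flat diagram in $S^2$ (including the consistency of the labeling around a closed curve), not merely the vanishing of the affine index polynomial on classical knots. This stronger statement is true and standard (it is how the vanishing is proved in \cite{Ka6}), and its proof is exactly the algebraic-intersection-number argument the paper runs directly on the open diagram: the loop at a crossing and its complementary loop are closed curves in $S^2$, hence have zero algebraic intersection. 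Either cite it in that form or include that short argument; with it, your proof is complete, and your identity $w_+(c)=\pm(\Delta_2-\Delta_1)$ is essentially the paper's ``weight equals algebraic intersection number of the loop with the transversal strands,'' specialized after closure to intersections with the shortcut.
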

\begin{proof}
Let $\tilde{K}$ be a knotoid diagram representing $K$. We label the underlying flat knotoid diagram of $\tilde{K}$ with respect to the labeling rule given in Figure \ref{fig:lab}. The \textit{algebraic intersection number} of a loop at a crossing $C$, $l(C)$ (see Section 4.1) with a strand of $\tilde{K}$ is defined to be the total number of times that the strand intersects the loop from left to right minus the total number of times that the strand intersects the loop from right to left. Figure \ref{fig:chee} illustrates two possible types of loops at the crossing $C$ one of which is oriented in the counterclockwise, and the other in the clockwise direction. The algebraic intersection numbers of the loop at $C$ with the piece of strand shown in the figure, are $-1$ and $+1$, respectively. In both pictures, the incoming arcs towards the crossing $C$ are labeled by some integer $a$. Assuming that the strand shown is the only one intersecting the loops, it can be verified that $-1$ is equal to the negative weight of the crossing $C$ of the first loop and $+1$ is equal to the positive weight of the crossing $C$ of the second loop. Then the following generalization is clear. If the sum of the algebraic intersection numbers of the loop $l(C)$ at the crossing $C$ with intersecting strands is equal to $n$ then $n$ is equal to either $w_-(C)$ or $w_+(C)$, depending on the orientation of the loop $l(C)$.   
\begin{figure}[H]
 \begin{center}
     \begin{tabular}{c}
		\Huge{
     \centering \scalebox{0.45}{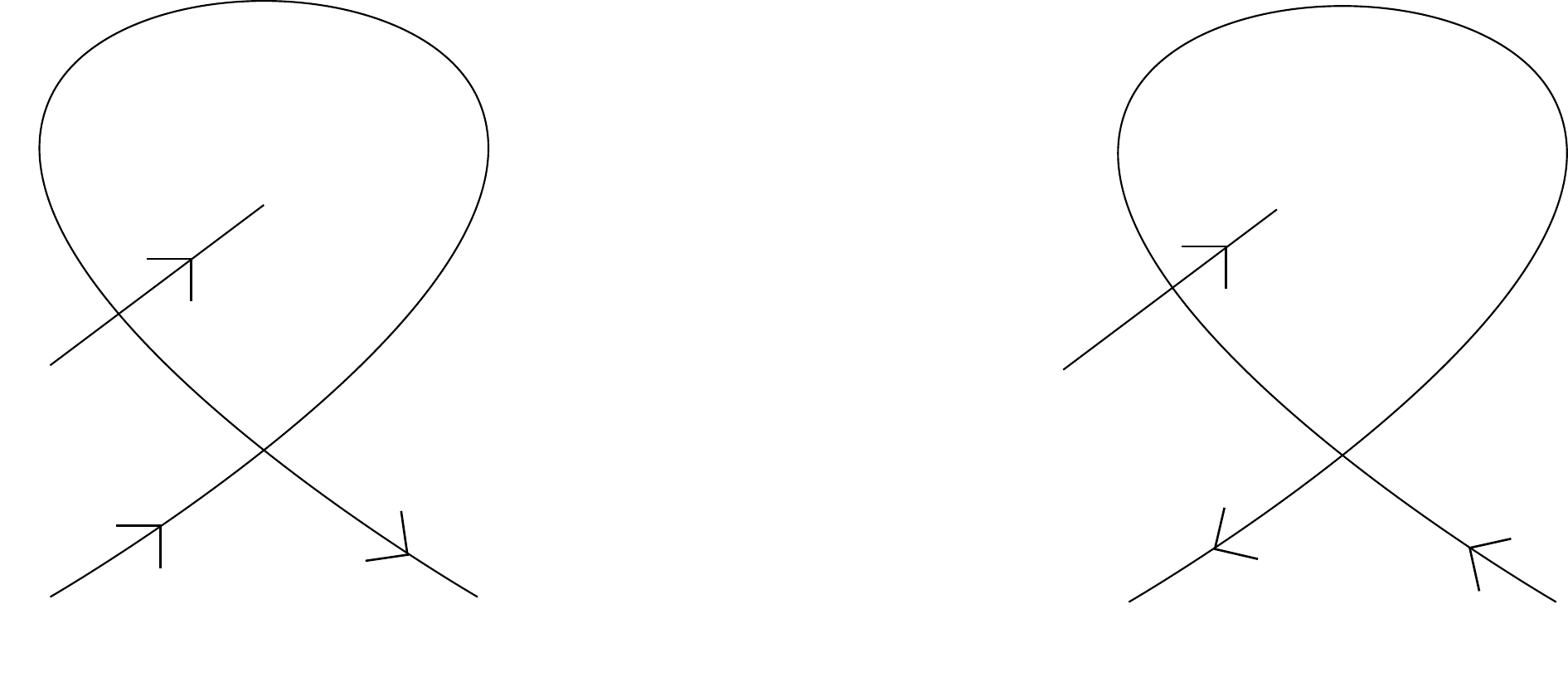}
		}
     \end{tabular}
     \caption{\bf The weights with respect to the orientation of the loop at $C$}
     \label{fig:chee}
\end{center}
\end{figure}
Let $m$ be the maximum degree of the affine index polynomial of $K$. Then there exists a crossing of $\tilde{K}$ with weight $m$. In fact, $m$ is the maximal weight among the weights of crossings of $\tilde{K}$. Let $\tilde{C}$ be one of the crossings of $\tilde{K}$ with weight $m$ and $l(\tilde{C})$ be the loop at $\tilde{C}$.

 Figure \ref{fig:seifertsmooth} shows the way to smooth a classical crossing of a knotoid diagram according to the orientation. Each crossing which are met twice while traversing along the loop $l(\tilde{C})$, are all smoothed accordingly to the orientation. This implies that each self-intersection of the loop $l(\tilde{C})$ is smoothed. Smoothing the self-intersections of the loop $l(\tilde{C})$ results in oriented embedded circles (in $S^2$) and a long oriented segment containing the tail and the head of $\tilde{K}$. Note that the long segment may intersect the resulting circles and itself. The \textit{algebraic intersection number} of one of the resulting circles with the long segment is defined as the total times of the segment intersects the circle from left to right minus the total times of the segment intersects the circle from right to left. Let $I_{\tilde{K}}$ denote the sum of the algebraic intersection numbers of the resulting circles with the long segment.
\begin{figure}[H]
 \begin{center}
     \begin{tabular}{c}
     \centering  \scalebox{0.5}{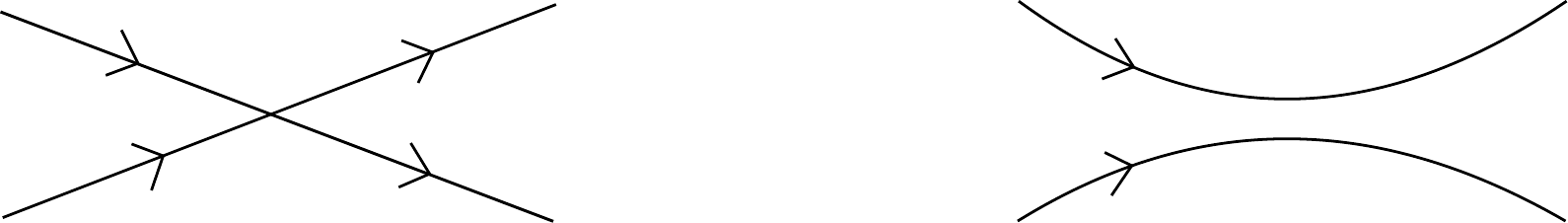}
     \end{tabular}
     \caption{\bf Smoothing a crossing of $\tilde{K}$ in the oriented way}
     \label{fig:seifertsmooth}
\end{center}
\end{figure}
None of the crossings of $\tilde{K}$ that contributes to non-trivially to the total algebraic intersection number, is smoothed since such a crossing is met only once. As a result, $I_{\tilde{K}}$ is equal to the sum of algebraic intersection numbers of the loop $l(\tilde{C})$ with the strands intersecting $l(\tilde{C})$. This shows that the sum of algebraic intersection numbers of the circles with the long segment is equal to either $w_-(\tilde{C})$ or $w_+(\tilde{C})$. Thus, the absolute value of $I(K)$ is equal to the , $|I(K)|=m$.

 On the other hand, it is easy to verify that the number $|I_K|$ can be at most as large as the number of the circles that are enclosing the endpoints (the tail or the head). In particular, $|I_K|$ is equal to the number of the circles if all intersections are positive. Thus we have that $m$ is at most as the number of circles enclosing the endpoints.

 The height of the diagram $\tilde{K}$ is at least as large as the number of the circles enclosing the endpoints, by the Jordan curve theorem. With this we have $h(\tilde{K})\geq m$.

 The affine index polynomial is a knotoid invariant so $m$ appears as the maximum degree of the affine index polynomial of any classical knotoid diagram equivalent to $\tilde{K}$. This implies that there is a crossing with weight $m$ in each representative knotoid diagram of $K$. Applying the same procedure explained above to the loops of the crossings with weight $m$ in each representative diagram gives us the inequality, $h(K)\geq m$ for any representative classical diagram $K$ and the statement follows.
\end{proof}
Figure \ref{fig:af} gives an illustration for the proof on the knotoid diagram $K$ in Figure \ref{fig:knotoid}(g).
\begin{figure}[H]
\centering
    \begin{subfigure}[b]{0.3\textwidth}
        \centering  \scalebox{0.75}{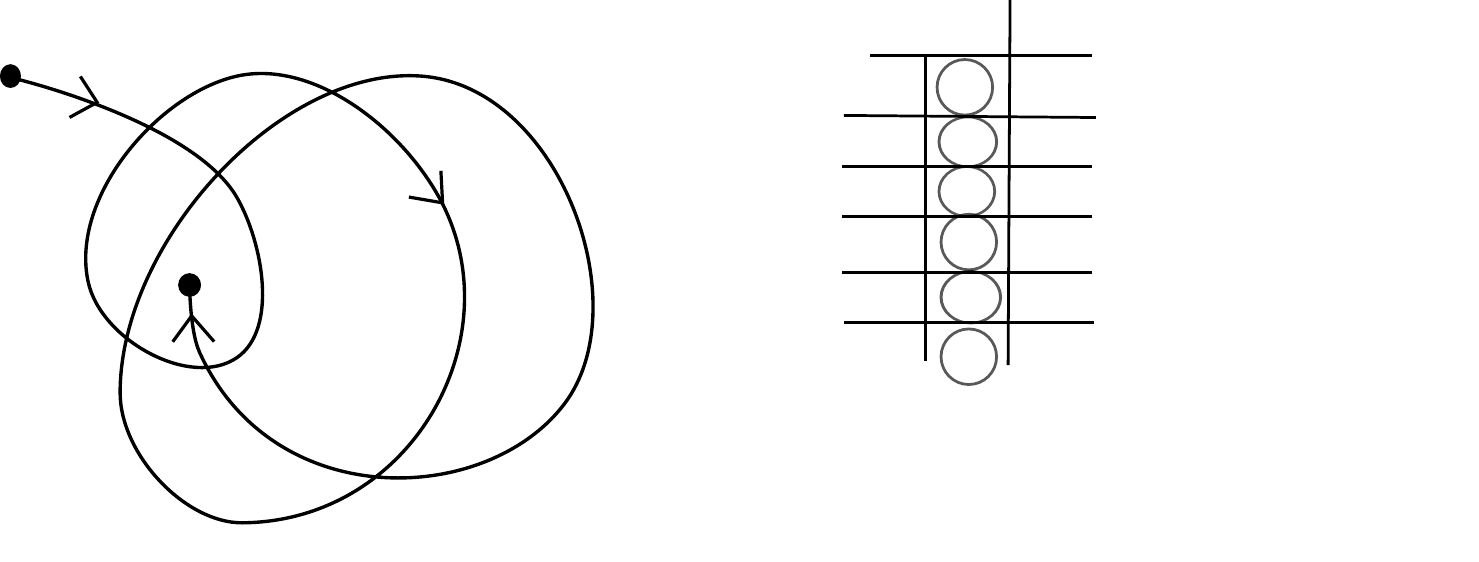}
     \caption{\bf $F(K)$ with integer labels}
     \label{subfig:labelz}
\end{subfigure}	\\
\vspace{2cm}
\begin{subfigure}[b]{0.3\textwidth}
     \centering  \scalebox{0.75}{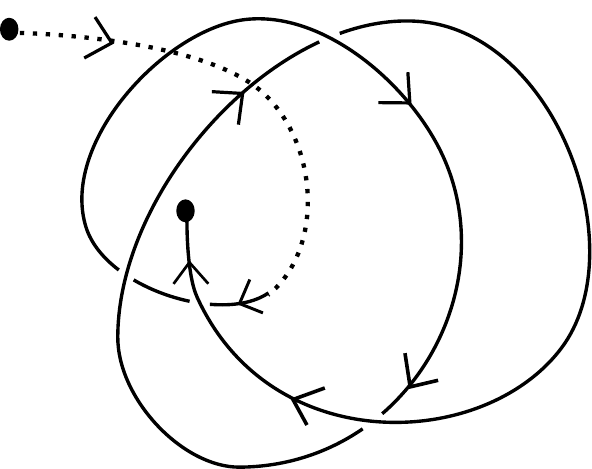}
     \caption{\bf Loop of the crossing C}
\end{subfigure}
\qquad
\begin{subfigure}[b]{0.6\textwidth}
     \centering  \scalebox{0.5}{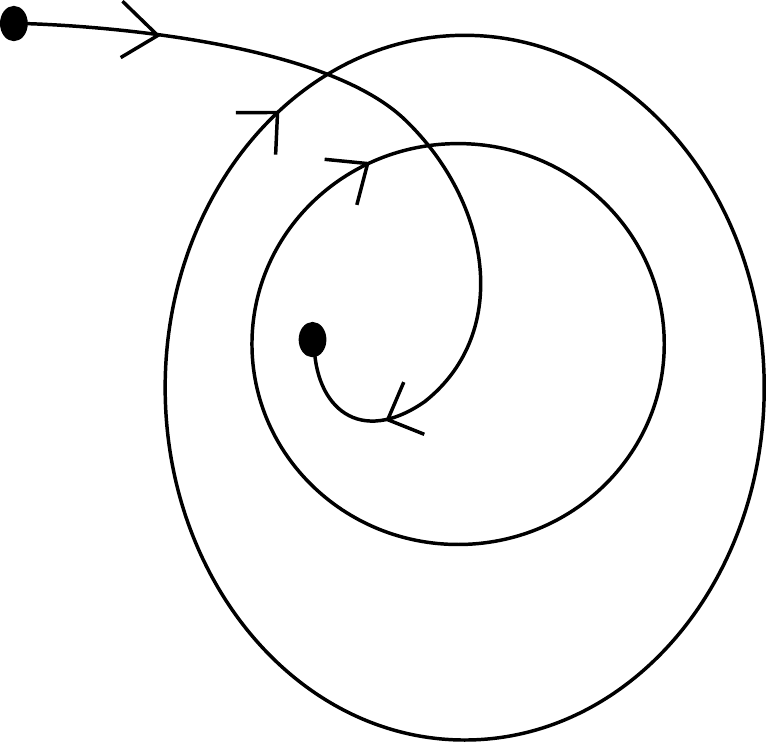}
     \caption{\bf Resulting circles and the long segment}
		\end{subfigure}	
		\caption{\bf An illustration for the proof of Theorem 4.12}
		\label{fig:af}
		\end{figure}
 One immediate consequence of Theorem 4.12 is that we are able to tell the height of the knotoids that can be represented with a spiral diagram with positive crossings. In particular, the affine index polynomials of the knotoids each represented by a diagram overlying the flat diagrams in \ref{fig:spi} with positive crossings are the following. $P_{K_1}(t)=t+t^{-1}-2$, $P_{K_2}(t)=t^2+t+t^{-1}+t^{-2}-4$ and $P_{K_3}(t)=t^3+t^2+t+t^{-1}+t^{-2}+t^{-3}-6$. The heights of the given diagrams are $1$, $2$ and $3$, respectively. Then by Theorem 4.12, it is concluded that the heights of the knotoids are $1$, $2$ and $3$, respectively. This is generalized as follows. The affine index polynomial of a classical knotoid represented by an $n$- fold spiral knotoid diagram has a term of the form $t^n+t^{-n}$ if all crossings of the diagram are positive. The maximum degree of the affine index polynomial is $n$ and the height of the spiral diagram is $n$. By Theorem 4.12, we conclude that the height of the knotoid is $n$. This shows that we have an infinite set of knotoids whose height is given by the affine index polynomial. 
\begin{figure}[H]
 \begin{center}
     \begin{tabular}{c}
     \centering  \scalebox{0.75}{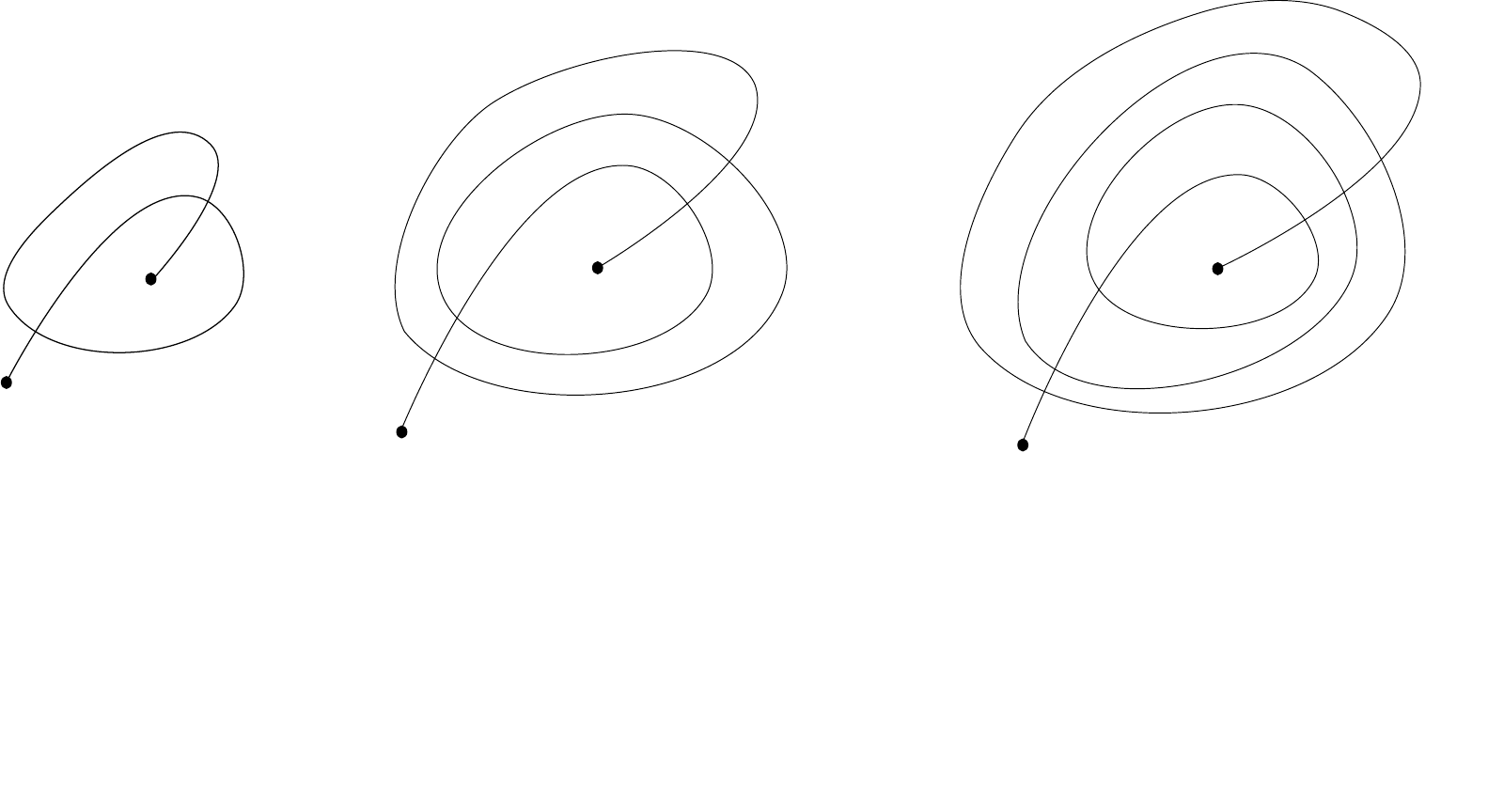}
     \end{tabular}
     \caption{\bf Flat spiral knotoid diagrams}
     \label{fig:spi}
\end{center}
\end{figure}	
There are examples of proper knotoids with trivial affine index polynomial so that the affine index polynomial gives trivial lower bound for the height of knotoids. More precisely, the knotoid $K$ represented by the diagram that overlies the $3$-fold flat spiral diagram in Figure \ref{fig:spi} with negative crossings $B$, $C$ and $D$ and positive crossings $A$, $E$, $F$, has trivial affine index polynomial. This implies that the affine index polynomial gives trivial information about the height of the knotoid represented. Here we reveal the following question: \textit{ Are there any other knotoid invariants giving any nontrivial information about the height}? The arrow polynomial discussed in the following section, gives an answer to this question. It is showed that the knotoid $K$ represents a non-trivial knotoid and in fact, a proper knotoid with height $3$, by a use of the arrow polynomial.
    ~ 
	\section{The Arrow Polynomial}
We define the arrow polynomial for knotoids in analogy with the arrow polynomial of virtual knots and links which was defined by H.A. Dye and L.H. Kauffman [DK] and independently by Y.~Miyazawa \cite{Mi}. The construction of the arrow polynomial of knotoids both for classical and virtual, is based on the \textit{oriented state expansion} of the bracket polynomial of knotoids which is shown in Figure \ref{fig:orientedstate}. 
\begin{figure}[H]
     \begin{center}
     \begin{tabular}{c}
     \centering  \scalebox{0.5}{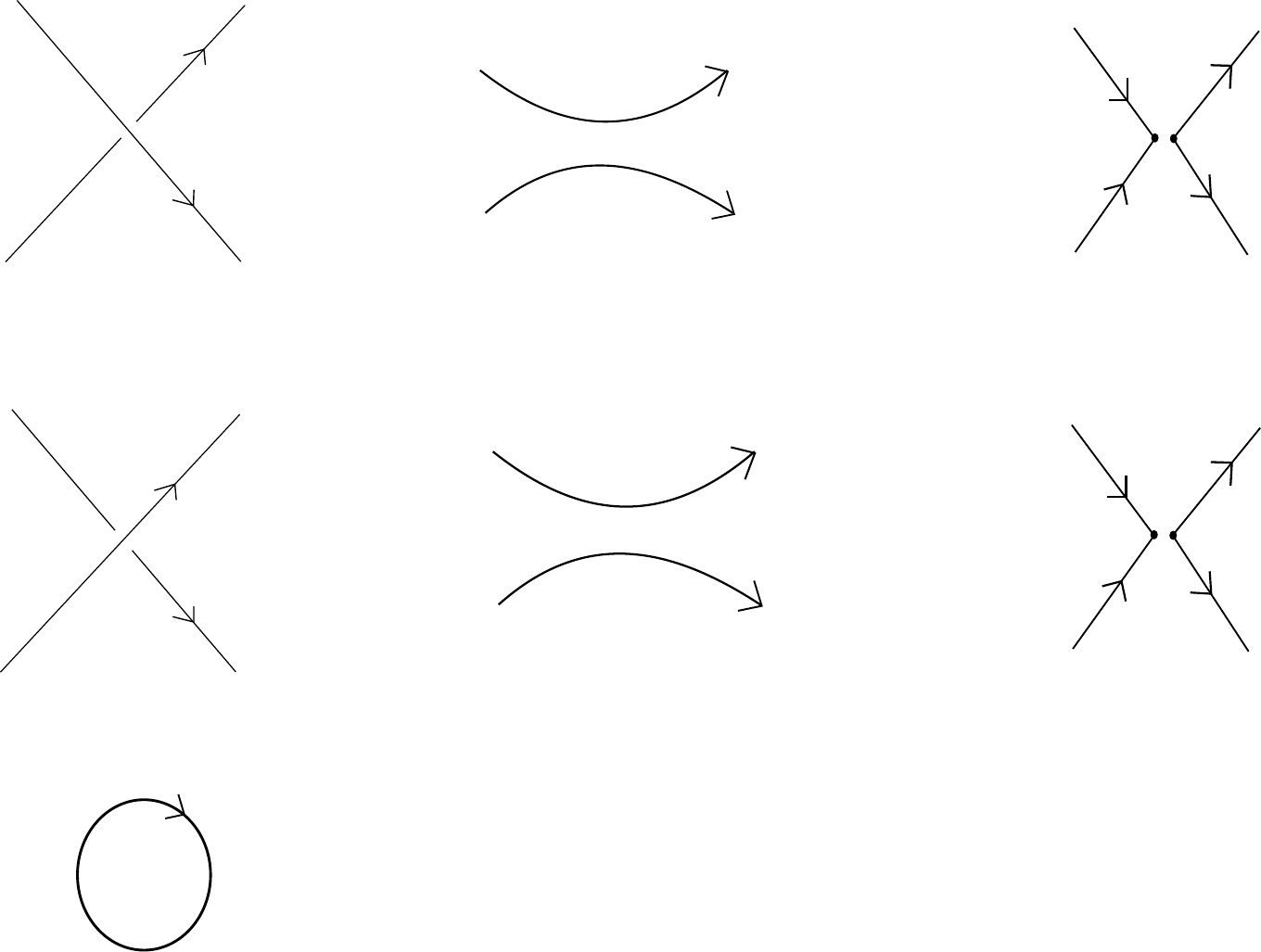}
     \end{tabular}
     \caption{\bf Oriented state expansion}
     \label{fig:orientedstate}
\end{center}
\end{figure}	
Oriented state expansion of knotoids involves \textit{oriented and disoriented smoothings} of all classical crossings that result in \textit{oriented states} circular components and one long state component or only a single long state component. The state components which are obtained by disoriented smoothings include an extra combinatorial structure in the form of \textit{paired cusps}. Each cusp has two arcs either going into the cusp or going out from the cusp. A cusp can be denoted by an angle which locally divides $S^2$ into two parts. One part is the span of the acute angle and the other part is the span of the obtuse angle. We call the part which is the span of the acute angle as \textit{inside} of the cusp and the part which is the span of the obtuse angle as \textit{outside} of the cusp.

 There is a list of rules which reduce the number of cusps in a state component that are determined accordingly to the virtual equivalence that is generated by the isotopy of $S^2$ or $\mathbb{R}^2$ and the detour move (only isotopy of $S^2$ or $\mathbb{R}^2$ for the classical case). This list is given in Figure \ref{fig:rules}. The basic reduction rule consists of cancellation of two consecutive cusps both with insides on the same side of the segment connecting them. Two consecutive cusps on a state component which have insides on the opposite sides of the segment connecting them, are not canceled out. Specifically, any two consecutive cusps on a circular component are canceled if they have insides in the same local region that the component forms. Therefore, a circular component with two such cusps turns into an embedded circular component which contributes to the polynomial as $d=(-A^2-A^{-2})$. Any two consecutive cusps on a long state component which have insides on the same side of the segment connecting them, are canceled out as well. Such a long state component turns into an embedded arc in $S^2$ and contributes to the polynomial with the same value of an embedded circular state, as $d=(-A^2-A^{-2})$. Two cusps on a circular component with insides on the opposite local sides of the circle are kept as graphical nodes. This component is regarded as a circular graphical state. Two cusps on a long state component whose insides are on opposite sides of the segment connecting them, are not reduced as well. Such a long state component is regarded as a graphical state. The graphical components contribute to the polynomial as extra variables. A circular graph component with surviving cusps can be turned into a circular graph without any virtual crossings by the detour move so that it can be depicted as a circular graph with cusps forming zig-zags on the component. A circular component with two cusps forming a zig-zag contributes as $K_1$ to the polynomial. In general, a circular graph with zig-zags formed by $2i$ alternating cusps, contributes as a variable, $K_i$ to the arrow polynomial. A long state component with zig-zags formed by $2i$ alternating cusps contributes as an additional variable, as $\Lambda_i$ to the arrow polynomial.
\begin{definition}\normalfont
We define the \textit{arrow polynomial} of a virtual or classical knotoid diagram $K$ as,
\begin{center}
${A[K]=\sum_{S} {A^{i-j}}{(-A^2-A^{-2})^{\|S\|-1}}{<\hat{S}>}}$,
\end{center}
where the sum runs over the oriented bracket states, $i$ is the number of state markers touching $A$ labels and $j$ is the number of state markers touching $A^{-1}$ labels in the state $S$, as in the usual the bracket sum, $\|S\|$ is the number of components of the state $S$ and $<\hat{S}>$ is the product of variables, ${K_{i_1}}^{j_1}...{K_{i_n}}^{j_n}\Lambda_i$, associated to the components of $S$ with surviving cusps.
\end{definition}
The variables $K_i$ and $\Lambda_i$ constitute an infinite set of commuting variables, commuting with each other also with the variable $A$ of the arrow polynomial. It is left to the reader to show an $\Omega_1$- move changes the arrow polynomial of a virtual knotoid by $-A^{\pm 3}$. 
\begin{figure}[H]
     \begin{center}
     \begin{tabular}{c}
     \centering  \scalebox{0.5}{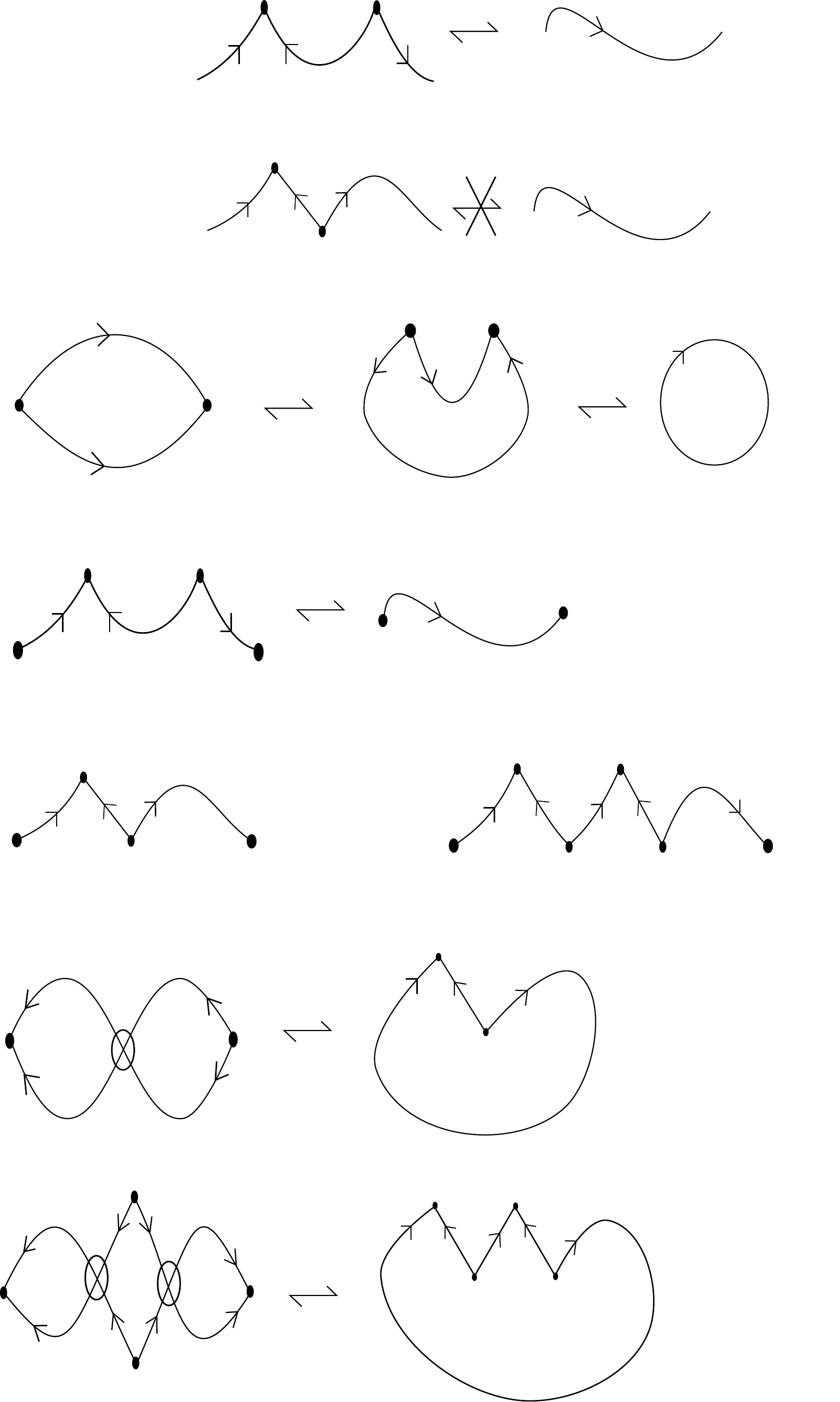}
     \end{tabular}
     \caption{\bf Reduction rules for the arrow polynomial}
     \label{fig:rules}     
\end{center}
\end{figure}
\begin{thm}
The normalization of arrow polynomial by $(-A^3)^{-\writhe(K)}$, where $\writhe(K)$ is the writhe of $K$, is a virtual and classical knotoid invariant.
\end{thm}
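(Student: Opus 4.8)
The plan is to verify that the unnormalized arrow polynomial $A[K]$ is invariant under the detour move and under the oriented $\Omega_2$- and $\Omega_3$-moves, and that it transforms by a factor $-A^{\pm3}$ under an $\Omega_1$-move; the writhe normalization then absorbs that last factor and produces a quantity invariant under all generalized $\Omega$-moves. Throughout I would invoke Polyak's reduction of the oriented Reidemeister moves, exactly as was done for the affine index polynomial in Theorem 4.9 via \cite{Po}: it suffices to check two flavours of the oriented $\Omega_1$-move, one flavour of the oriented $\Omega_2$-move, and one flavour of the cyclically oriented $\Omega_3$-move (two crossings of one sign, one of the other). Since the endpoints of a knotoid never enter these local pictures and the forbidden moves are excluded, every verification is purely local and parallels the virtual-link case, the only new feature being that one of the loops appearing in a local picture may instead be a segment of the single long state component (contributing a $\Lambda_i$-type variable rather than a $K_i$-type one).

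First, the detour move (and the virtual $\Omega$-moves and $\Omega_v$-move, which are its special cases). Every state of $A[K]$ is produced by smoothing only the classical crossings, while virtual crossings are transported unchanged, and all cusp-reduction rules of Figure \ref{fig:rules} are stated modulo the detour move. Hence a detour move on $K$ induces a detour move on each oriented state, every state contributes the same monomial $A^{i-j}(-A^2-A^{-2})^{\|S\|-1}\langle\hat S\rangle$ before and after, and $A[K]$ is unchanged. Next, the $\Omega_2$-move: expand the two classical crossings of the move pattern into the four oriented states. As in the ordinary bracket computation, the two ``mixed'' states cancel the two ``identity'' states ($A\cdot A^{-1}=1$, and the loop factors $-A^2-A^{-2}$ match). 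The extra point is the state in which both crossings are smoothed against the orientation: this creates a pair of consecutive cusps whose insides lie on the same side of the arc joining them, so by the basic reduction rule they cancel, and the state collapses to the corresponding state of the post-move diagram with no spurious $K_i$ or $\Lambda_i$ factor. The bookkeeping here is identical to the virtual-knot case, and it goes through verbatim when the strand through the bigon is part of the long segment.

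Then the $\Omega_3$-move is handled by the classical trick: apply the oriented state expansion to just one of the three crossings. For one choice of smoothing the two sides of the move become configurations related by $\Omega_2$-moves together with planar and detour isotopy; for the other choice the two sides are detour-isotopic. Summing, $\Omega_3$-invariance follows from the already established $\Omega_2$- and detour-invariance, again unaffected by whether one of the three loops is replaced by a piece of the long segment. Finally, for the $\Omega_1$-move a direct expansion of the single crossing (the disoriented smoothing yields two adjacent cusps on one loop with insides on the same side, which cancel, leaving only the oriented smoothing times $A^{\mp1}$ and the curl loop) gives $A[K']=-A^{\pm3}A[K]$, as already noted in the text. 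Since an $\Omega_1$-move changes $\writhe(K)$ by $\pm1$, the factor $(-A^3)^{-\writhe(K)}$ changes by $(-A^3)^{\mp1}=-A^{\mp3}$, cancelling the $-A^{\pm3}$. Combining the four cases, $(-A^3)^{-\writhe(K)}A[K]$ is unchanged by every generalized $\Omega$-move, hence is a virtual (and in particular classical) knotoid invariant.

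I expect the $\Omega_2$ step to be the main obstacle: it is the only place where the arrow-polynomial structure genuinely departs from the ordinary bracket, and one must carefully track the cusps produced by the disoriented smoothings to confirm that the reduction rules eliminate them and that no stray $K_i$ or $\Lambda_i$ variable survives on either side — including the sub-case where the relevant component is the long segment. Once $\Omega_2$ and detour invariance are secured, the $\Omega_3$ and $\Omega_1$ steps are essentially formal consequences.
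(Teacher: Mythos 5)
Your proposal is correct and follows essentially the same route as the paper, whose proof simply defers to the invariance argument for the arrow polynomial of virtual knots and links in \cite{DK,Ka2}; you have written out exactly those checks (detour, $\Omega_2$ via cusp cancellation, $\Omega_3$ from $\Omega_2$ plus detour, and the $-A^{\pm 3}$ factor under $\Omega_1$ absorbed by the writhe normalization), together with the correct observation that the only knotoid-specific feature is that a long state component may carry $\Lambda_i$ rather than $K_i$ variables and that the endpoints never enter the local move patterns. This is a faithful, slightly more detailed rendering of the intended argument, so no further changes are needed.
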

\begin{proof}
The proof follows similar as the proof of the invariance of the arrow polynomial for virtual knots/links. See \cite{DK,Ka2}.
\end{proof}
See Figure \ref{fig:ex} for an example of a knotoid with nontrivial arrow polynomial.
\begin{figure}[H]
\centering\scalebox{.8}{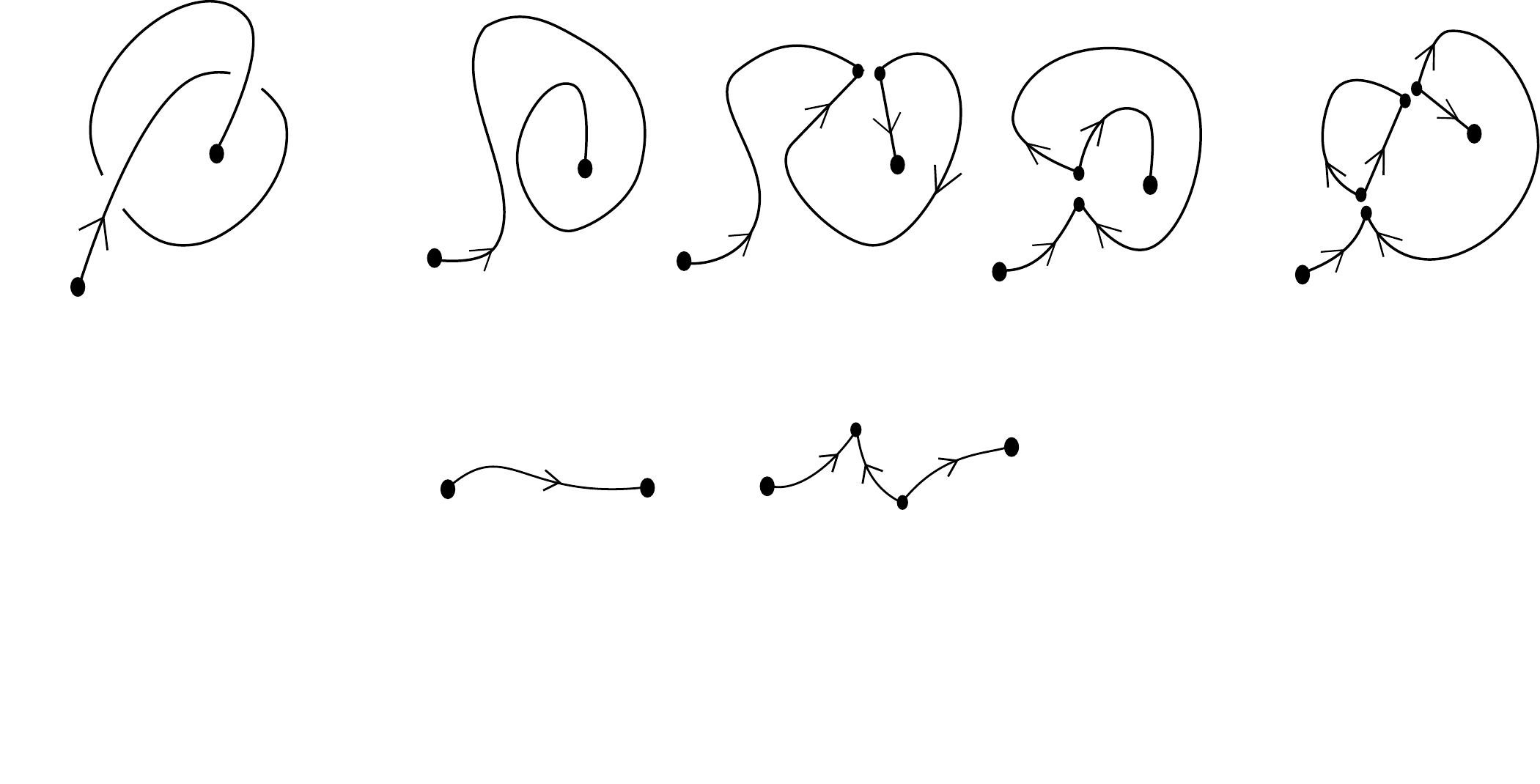}
\caption{An example}
\label{fig:ex}
\end{figure}
\begin{definition}
\normalfont
The \textit{\textsl{K}-degree} of a summand of the arrow polynomial of a virtual knotoid which is of the form,
$A^m({ K_{i_1}}^{j_1}{K_{i_2}}^{j_2}...{K_{i_n}}^{j_n})\Lambda_i$, is equal to
\begin{center}
${i_1}\times{j_1}+...+{i_n}\times{j_n}$.
\end{center}
The \textit{\textsl{K}-degree} of the arrow polynomial of a virtual knotoid is defined to be the maximum $K$- degree taken among the $K$-degrees of summands of the arrow polynomial of the knotoid.
\end{definition}
Note that the $\textsl{K}$-degree of the arrow polynomial of a virtual knot/link is defined in a similar way, as the maximum $\textsl{K}$-degree among the $K$-degrees of the summands of the arrow polynomial \cite{DK}.
\begin{definition}
\normalfont
The $\Lambda$-degree of a summand of the arrow polynomial of a virtual knotoid which is in the form, $A^m({ K_{i_1}}^{j_1}{K_{i_2}}^{j_2}...{K_{i_n}}^{j_n})\Lambda_i$ is equal to $i$. The \textit{$\Lambda$-degree} of the arrow polynomial of a virtual knotoid is defined to be the maximum $\Lambda$- degree among the  $\Lambda$-degrees of all the summands of the polynomial.
\end{definition}
Note that for a classical knotoid diagram $K$ in $S^2$ or a virtual one, the oriented state components of the virtual closure of $K$, $\overline{v}(K)$ is obtained by connecting the endpoints of each long state component in the oriented state expansion of $K$, in the virtual fashion (with an embedded arc creating virtual crossings whenever it meets with the component). Therefore, instead of assigning $\Lambda_i$ to long state components with $2i$ cusps which are not reduced by the reduction rules, if we assigned $K_i$ as a variable, we would have 
\begin{center}
$A[K]=A[\overline{v}(K)]$.
\end{center}
 The arrow polynomial gets more effective as an invariant of virtual knotoids by assigning to long state components with $2i$ irreducible cusps, the variable $\Lambda_i$, $i=1,2,...$. Figure \ref{fig:11} depicts the oriented state expansion of the knotoid diagram given before in Figure \ref{fig:trivialvc}. The reader can easily verify that the arrow polynomial of the virtual knot that is the virtual closure of this knotoid diagram is trivial. In other words, assigning $K_1$ to the long state components results in trivial arrow polynomial. Assigning $\Lambda_1$ to the long state components, however, results in a non-trivial arrow polynomial, as shown in Figure \ref{fig:11}. The arrow polynomial detects the non-triviality of this knotoid.

 Another example is the virtual knotoid represented by the knotoid diagram $K$, shown in Figure \ref{fig:Slavik}. The virtual closure of this knotoid is the Slavik's Knot \cite{DK} whose normalized arrow polynomial is trivial. The arrow polynomial of the knotoid is $A[K]=(A^{-9}+A^{-7}+3A^{-5}+5A^{-1}+A+6A^3++2A^5+3A^7 )+(-A^3-A^{-1}+A^{-3}+A+A^5)\Lambda_1$. This implies that the normalized arrow polynomial is non-trivial and shows that the non-triviality of this virtual knotoid is detected by the normalized arrow polynomial defined by assigning $\Lambda_1$-variable to the long state components. 
\begin{figure}[H]
    \centering
        \centering  \scalebox{0.75}{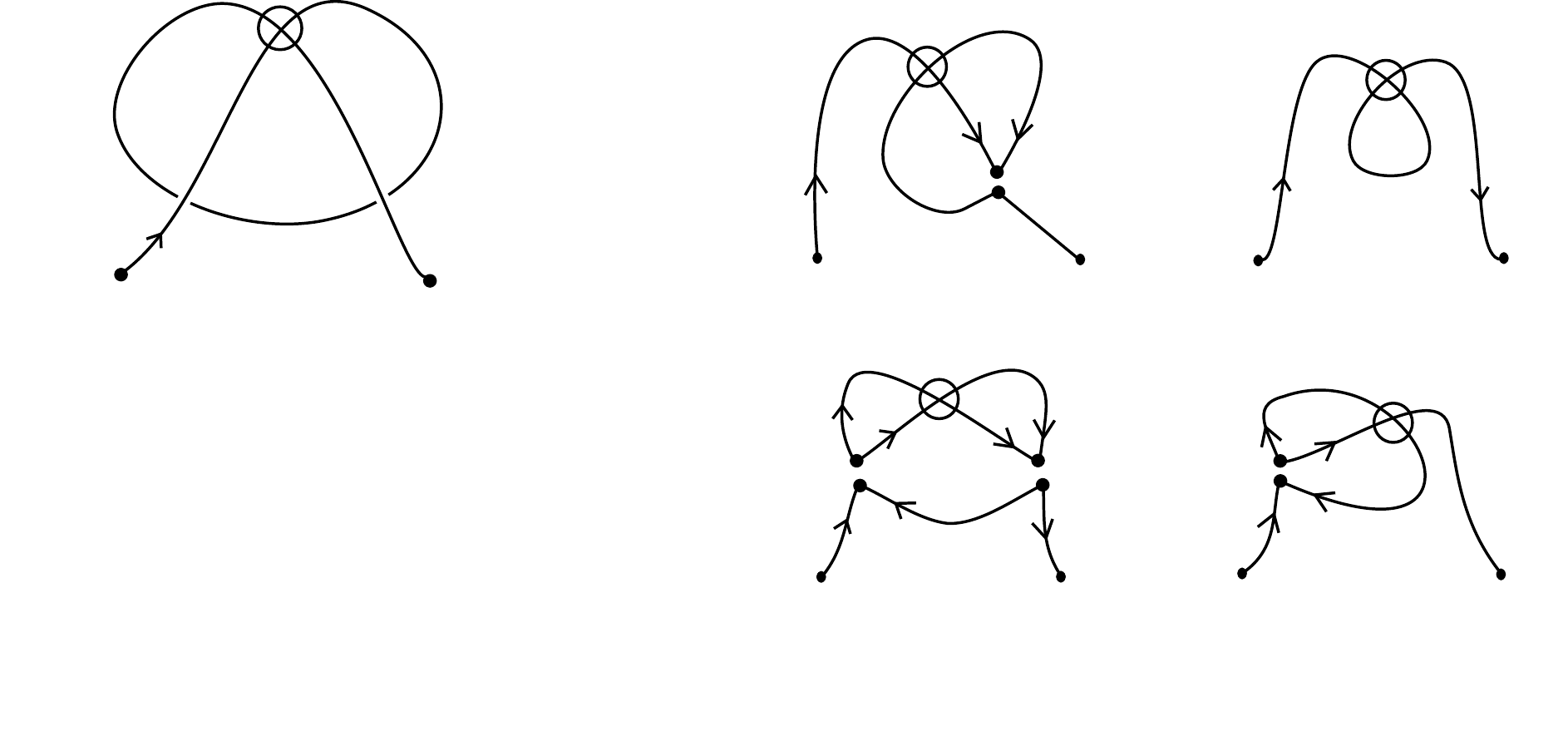}
        \caption{\bf The arrow polynomial of the knotoid in Figure \ref{fig:trivialvc}}
        \label{fig:11}
    \end{figure}
	\begin{figure}[H]
    \centering
        \centering  \scalebox{0.30}{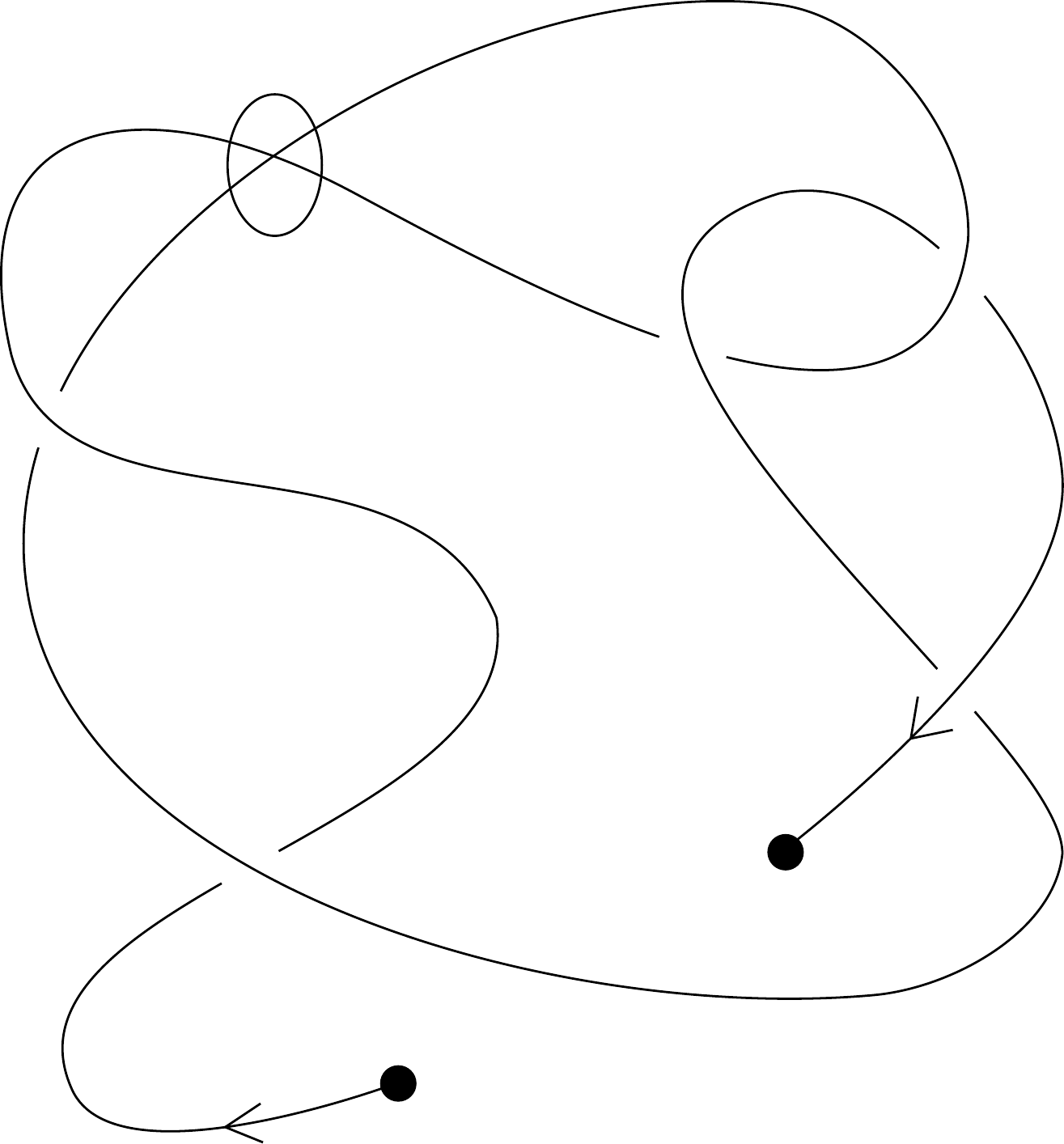}
        \caption{\bf The knotoid closing to Slavik's knot}
        \label{fig:Slavik}
    \end{figure}
		\begin{thm}[\cite{Ka2}]
In a classical knot or link diagram, all state components of the arrow polynomial reduce to loops that are free from cusps.
\end{thm}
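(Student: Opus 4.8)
The plan is to reduce the theorem to a planar combinatorial statement about the state curves and then to exploit the fact that a classical diagram lives in the sphere. Fix an oriented classical knot or link diagram $D$ and form its oriented state expansion as in Figure \ref{fig:orientedstate}. Since $D$ has no virtual crossings, and each oriented or disoriented smoothing of a classical crossing is a local planar replacement (the disoriented smoothing is the same underlying planar smoothing, merely re-oriented and decorated with a pair of cusps), every oriented state $S$ of $D$ is a finite disjoint union of simple closed curves embedded in $S^2$, each carrying some cusps. Thus it suffices to show: every such decorated simple closed curve $C\subset S^2$ reduces, under the moves of Figure \ref{fig:rules} together with planar isotopy, to an embedded loop with no cusps; then no variable $K_i$ or $\Lambda_i$ can occur in any summand.

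First I would record the local combinatorics of the cusps on a single loop $C$. Comparing, as one traverses $C$, the direction of travel with the orientation the arcs of $C$ inherit from $D$, the agreement switches exactly at each cusp; and just before a source cusp (both arcs oriented into the cusp point) travel agrees with the inherited orientation while just after it disagrees, the opposite holding at a sink cusp. Hence source and sink cusps alternate around $C$, so each loop carries an even number of cusps. This part is pure bookkeeping.

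The heart of the matter --- and the step I expect to be the main obstacle --- is the following lemma: if $C$ is an embedded simple closed curve in $S^2$, then for any two cyclically consecutive cusps $c,c'$ on $C$ the ``inside'' wedges of $c$ and of $c'$ lie on the same side of the cusp-free arc $\gamma$ of $C$ joining them, so the pair $c,c'$ cancels under the basic reduction rule. Equivalently, a zig-zag loop with $2i\ge 2$ cusps whose insides alternate sides --- exactly the configuration that produces the variable $K_i$ (or $\Lambda_i$ for a long component) --- cannot be drawn as an embedded curve in $S^2$; accommodating such an accordion-shaped loop is precisely what forces virtual crossings to appear. I would prove the lemma by a local planarity analysis: near $c$ the two arcs of $C$ form a thin spike to one side of $\gamma$, and because $C$ separates $S^2$ into two disks while $\gamma$ is a subarc of $C$ reaching $c'$ without meeting $C$ again, the spike at $c'$ is forced to lie on the same side of $\gamma$ as the spike at $c$. (A slicker packaging: attach to each loop a ``cusp winding number'' $\mu(C)\in\mathbb{Z}$, the signed cusp count relative to a chosen base orientation; check that the detour move leaves $|\mu(C)|$ unchanged, that a cusp cancellation changes it by at most one, that an irreducible loop contributing $K_i$ or $\Lambda_i$ has $|\mu(C)|=i$, and that $\mu(C)=0$ whenever $C$ is an embedded curve bounding a disk, by a turning-number computation in which the cusps contribute cancelling half-turns.)

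Granting the lemma, the theorem follows by induction on the number of cusps of a loop: a cusp-free loop is already of the required form, and a loop with $2m>0$ cusps contains, by the lemma, an adjacent cancelling pair whose removal produces an embedded loop with $2m-2$ cusps, to which the inductive hypothesis applies. Carrying this out for every loop of every state $S$ shows that all state components of the arrow polynomial of a classical diagram reduce to loops free of cusps --- so the arrow polynomial of a classical knot or link collapses to its Kauffman bracket, in agreement with \cite{DK}.
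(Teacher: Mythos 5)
The paper itself gives no proof of this statement---it is quoted from \cite{Ka2} (see also \cite{DK})---so your proposal has to stand on its own, and its central step does not hold. Your key lemma asserts that for an embedded closed curve $C$ in $S^2$ any two cyclically consecutive cusps automatically have their insides on the same side of the connecting arc, equivalently that a zig-zag loop cannot be drawn embedded in $S^2$. That is false: a heart-shaped embedded circle, with one convex corner (wedge opening into the bounded complementary disk) and one reflex corner at the cleft (wedge opening into the other region), has exactly two consecutive cusps whose insides lie on opposite sides of either connecting arc---precisely the configuration $K_1$. This curve satisfies everything your justification invokes (it is embedded, $C$ separates $S^2$ into two disks, the arc $\gamma$ meets $C$ only at its endpoints), so embeddedness and the Jordan curve theorem alone cannot force cancellation; the virtual crossings in the usual pictures of $K_i$ are an artifact of how those representatives are drawn, not a planarity obstruction for the decorated curve. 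The ``slicker packaging'' fails for the same reason: in an irreducible zig-zag the cusp insides alternate sides, so any signed cusp count such as your $\mu(C)$ is zero on $K_i$ and cannot equal $i$; and a quantity that a legitimate cancellation may change ``by at most one'' is not an invariant argument at all.

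What the theorem actually needs---and what your argument never uses---is the specific way cusps arise in the oriented state expansion of a classical diagram: they are created in pairs at each disoriented smoothing site, the two cusps of a pair facing one another across that site, with the entire state (all circles together with the smoothing sites) planar. A correct proof must exploit this pairing together with the nesting of the disjointly embedded state circles (for example by an innermost-circle or induction-on-crossings argument in the spirit of \cite{DK}); your reduction to a statement about arbitrary cusp-decorated embedded curves throws away exactly the hypothesis that carries the content. Note also that your lemma is stronger than the theorem even for genuine classical states: a state circle whose cyclic cusp pattern reads in--in--out--out reduces to a cusp-free loop, as the theorem requires, although its middle consecutive pair does not itself cancel. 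So the induction you build on the lemma is not salvageable as stated; the gap is the lemma itself.
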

 It is clear that the virtual closure of a knot-type knotoid diagram is a classical knot diagram. The oriented state components of a knot-type knotoid diagram become the oriented state components of a classical knot diagram when the endpoints of long components are connected virtually. Then it follows by the Theorem 5.2 that cusps do not survive in any of the state components of a knot-type knotoid diagram, and we have the following corollary.  
\begin{cor}\normalfont
The normalized arrow polynomial of a knot-type knotoid coincides with the normalized bracket polynomial of the knotoid.
\end{cor}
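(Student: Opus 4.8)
Since the normalized arrow polynomial is a knotoid invariant (Theorem 5.1), we are free to pick any convenient representative of the given knot-type knotoid. The plan is to choose a \emph{knot-type knotoid diagram} $K$, that is, one whose tail and head lie in the same region of the diagram, so that the two endpoints can be joined by an embedded arc lying entirely in that region and meeting no strand of $K$. With this choice the virtual closure $\overline{v}(K)$ is literally a classical knot diagram $\widehat{K}$, obtained from $K$ by adjoining a crossingless arc; this is the feature that lets us invoke Theorem 5.2.

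Next I would compare the oriented state expansions of $K$ and of $\widehat{K}$. Oriented state expansion is a purely local operation at each classical crossing, and $K$ and $\widehat{K}$ have exactly the same classical crossings while the closure arc contributes none. Hence there is a canonical bijection $S\mapsto\widehat{S}$ between oriented states of $K$ and oriented states of $\widehat{K}$: the single long state component of $S$ is closed up into a circle by the cusp-free closure arc, all circular components are carried over unchanged, $\|S\|=\|\widehat{S}\|$, and the collections of cusps on corresponding components are identified. Because the closure arc lies in a region with no crossings, no cusp can occur near the endpoints, so every cusp-cancellation move available on a component of $S$ remains available on the corresponding component of $\widehat{S}$; conversely, a surviving zig-zag of cusps (with insides alternating sides) on a component of $S$ cannot be removed by any detour move or isotopy after closing up, since cusps whose insides lie on opposite sides are never cancelled by the reduction rules. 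Thus a state component of $S$ reduces to something still carrying cusps if and only if the corresponding component of $\widehat{S}$ does.

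By Theorem 5.2, every state component of the classical knot diagram $\widehat{K}$ reduces to a loop free of cusps; hence, by the previous paragraph, every state component of $K$ — the long component together with all the circular ones — reduces to a cusp-free arc or loop. Consequently, for each oriented state $S$ of $K$ the variable product $\langle\hat{S}\rangle$ is the empty product $1$, and the defining sum for the arrow polynomial collapses to
\[
A[K]=\sum_{S}A^{\,i-j}\,(-A^2-A^{-2})^{\|S\|-1}=\sum_{S}A^{\sigma(S)}\,d^{\,\|S\|-1}=\langle K\rangle ,
\]
because the number $i$ of state markers touching $A$-labels and the number $j$ of state markers touching $A^{-1}$-labels are precisely the numbers of $A$- and $B$-smoothings of $S$, so $i-j=\sigma(S)$, and $\|S\|$ and $d$ match the data in the bracket sum. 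Multiplying both sides by $(-A^3)^{-\writhe(K)}$ then identifies the normalized arrow polynomial of $K$ with the normalized bracket polynomial $f_K$, as claimed; and since both are invariants, the identity holds for the knotoid, not merely this diagram.

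The main obstacle is the transfer step in the middle paragraph: making rigorous that cusp reduction on the states of $K$ mirrors cusp reduction on the states of $\widehat{K}$, i.e.\ that closing the long component by a crossingless arc neither creates nor destroys surviving cusps. This amounts to checking that the reduction moves of Figure \ref{fig:rules} commute with the closure operation, which is where the hypothesis that the endpoints sit in a crossing-free region is used in an essential way. Once this is in place, the remaining points — the state bijection, the collapse of $\langle\hat{S}\rangle$ to $1$, and the exponent bookkeeping under the writhe normalization — are routine.
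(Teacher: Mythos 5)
Your proposal is correct and follows essentially the same route as the paper: represent the knot-type knotoid by a diagram with both endpoints in one region, observe that closing the long state components turns its oriented states into the oriented states of a classical knot diagram, and invoke Theorem 5.2 to conclude that no cusps survive, so the arrow state sum collapses to the bracket state sum and the writhe normalizations agree. The extra care you take with the state bijection and the transfer of the cusp-reduction rules only makes explicit what the paper leaves as a brief discussion preceding the corollary.
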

 Cusps may survive in a long state component of a proper knotoid diagram. If the $\Lambda$-degree of the arrow polynomial of a knotoid is nonzero then it is immediate to conclude by the discussion above that it not a knot-type but a proper knotoid. For example, the knotoid diagram shown in Figure \ref{fig:ex} represents a proper knotoid since the arrow polynomial of the knotoid has $\Lambda$- degree $1$. 

 The circular components of an oriented state of any classical knotoid diagram are all free of cusps. This follows by the same reasoning with the reasoning of Theorem 5.2. As a conclusion, the $K$-degrees of any summand of the arrow polynomial of a classical knotoid is zero.

  For virtual knotoids, cusps can survive in circular state components as well as they can survive in long state components. It means that both the $K$- and $\Lambda$- degrees of the arrow polynomial of a virtual knotoid may be nontrivial. We know that the knotoid diagram, given in Figure \ref{fig:11} is not virtually equivalent to a classical knotoid since the $K$-degree of the arrow polynomial is $1$.
\begin{rem}\normalfont
Direct computation shows that the arrow polynomials of the knotoids represented by the diagrams $K_1$ and $K_2$ given in Figure \ref{fig:different},  are $A[K_1]=1-A^{-4}+A^4+(-A^{-2}+A^2)\Lambda_1$ and $A[K_2]=2-A^{-4}-A^4+A^8+(-A^{-6}+A^{-2})\Lambda_1$, respectively. It can be easily verified that the normalized arrow polynomials of $K_1$ and $K_2$ are different. Therefore $K_1$ is not equivalent to $K_2$.
\end{rem}
\begin{rem}\normalfont
The arrow polynomial generalizes to a virtual multi-knotoid invariant directly as follows. All crossings including the crossings shared by two components of a given oriented virtual multi-knotoid diagram are smoothed in the same way. The resulting oriented state components, including oriented circular components and oriented long state components, are labeled by either $A$ or $A^{-1}$ at each smoothing site. The arrow polynomial for multi-knotoids is defined as the summation of all products of labels assigned to oriented state components. If $K$ is a multi-knotoid diagram without any virtual crossings then it follows by a similar discussion with the proof of Theorem 5.2 that the circular components of $K$ are free of cusps, and cusps can survive only on the long state components. 
\end{rem}
\subsection{The Arrow Polynomial and the Height of Knotoids}
The arrow polynomial can be used for estimating the height of a knotoid in $S^2$. Let us firstly recall more from virtual knot theory.
\begin{definition}
\normalfont The \textit{virtual crossing number} of a virtual knot/link is the minimum number of virtual crossings over all representative diagrams.
\end{definition}
The problem of determining the virtual crossing number of a virtual knot or link is a fundamental problem in virtual knot theory. There is a relation between the virtual crossing number and the maximal $K$-degree of the arrow polynomial of a virtual knot, as stated by the following theorem.
\begin{thm} \cite{DK}
The virtual crossing number of a virtual knot/link is greater than or equal to the maximal \textit{K}-degree of the arrow polynomial of that virtual knot/link.
\end{thm}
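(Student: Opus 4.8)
The plan is to reduce the inequality to a bound for a single fixed diagram. The normalized arrow polynomial, and therefore its $K$-degree, is an invariant of virtual knots and links (the closed-diagram analogue of Theorem 5.1, whose proof is the same as in \cite{DK,Ka2}); so it suffices to prove that for \emph{every} diagram $D$ of the given virtual knot or link, the $K$-degree of the arrow polynomial computed from $D$ is at most the number $v(D)$ of virtual crossings of $D$. Taking the minimum of $v(D)$ over all diagrams then gives $K\text{-degree}\le$ virtual crossing number, which is the assertion. Thus the whole content is the per-diagram bound, and I would establish it state by state: for each oriented bracket state $S$ of $D$ I claim the $K$-degree of the monomial $\langle\hat S\rangle$ is at most $v(D)$.

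Fix a state $S$. Smoothing every classical crossing (in the oriented or the disoriented way) leaves a collection of immersed loops whose only crossings are the $v(D)$ virtual crossings, with the disoriented smoothings contributing paired cusps; for a knot or link all state components are circular. Apply the reduction rules of Figure \ref{fig:rules} exhaustively. On each circular component the cusps then appear as an alternating chain of sources and sinks --- an ``irreducible zig-zag'' of some length $2i(C)$ --- and the component contributes the variable $K_{i(C)}$, so the $K$-degree of $S$ equals $\sum_C i(C)$. The heart of the argument is a \emph{localization lemma}: after all reductions one can slide cusps along their loops (planar isotopy, together with detour moves across virtual crossings) so that the $\sum_C i(C)$ surviving source--sink pairs are injectively attached to the virtual crossings of $D$. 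The mechanism is the planarity obstruction already used in Theorem 5.2: a source and a sink that can be brought adjacent along an arc meeting no virtual crossing necessarily have their cusp interiors on the same side of that arc and therefore cancel; hence an \emph{irreducible} zig-zag pair is forced to wind around a virtual crossing, and $\sum_C i(C)\le v(D)$ follows.

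I expect the localization lemma to be the main obstacle. One must first make the reduction process precise --- showing it terminates in a well-defined normal form of alternating source/sink cusps on each loop, much as one reduces an alternating word --- and then track carefully what happens when a cusp is dragged through a virtual crossing and when the two cusps of a would-be pair are brought together, in order to produce the injection into the set of virtual crossings rather than merely a rough count. Once this bookkeeping is done the theorem is immediate. It is also worth noting that the same analysis exhibits the geometric source of the $K$-degree, namely state loops winding around the handles of the supporting surface of the abstract diagram; this is exactly the point of view that will make the height estimate for knotoids in the next subsection transparent, with long state components and the variables $\Lambda_i$ playing the role that circular components and the $K_i$ play here.
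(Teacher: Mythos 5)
Your reduction to a per-diagram, per-state bound and the cusp/traversal counting form the right skeleton, and they are indeed the skeleton of the argument in \cite{DK}; note that the paper itself does not prove this statement at all but quotes it from \cite{DK}, so the relevant comparison is with that cited proof. However, your proposal has a genuine gap exactly where you say you expect one: the ``localization lemma'' is the entire mathematical content of the theorem, and the mechanism you offer for it does not work as stated. You claim that a source and a sink that can be made adjacent along an arc meeting no virtual crossing must have their insides on the same side of that arc and hence cancel, calling this a planarity obstruction. But the irreducible zig-zag is itself a planar configuration containing no virtual crossings whatsoever (it is precisely the picture defining $K_1$ in Figure \ref{fig:rules}), so the absence of virtual crossings on the connecting arc cannot by itself force same-sidedness: whether two consecutive cusps form a cancellable pair or a zig-zag is an intrinsic feature of the curve-with-cusps, unchanged by isotopy and detour moves, which is exactly why sliding cusps around cannot create the cancellation you need. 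What has to be proved is a global statement about how cusps arise in facing pairs at the disoriented smoothings of the classical crossings of a diagram in $S^2$, and that is the substance of \cite{DK}; appealing to Theorem 5.2 does not supply it, both because Theorem 5.2 is itself quoted without proof and because it concerns diagrams with no virtual crossings at all, not a virtual-crossing-free arc inside a diagram that has virtual crossings elsewhere.

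Two further bookkeeping issues. First, the injection of surviving source--sink pairs into the set of virtual crossings is stronger than what the count requires and may not be attainable, since each virtual crossing carries two strand-traversals and could a priori serve cusp pairs on two different state components; the count you actually need is that every arc between consecutive surviving cusps contains at least one virtual traversal, whence the total number of surviving cusps, $2\sum_C i(C)$, is at most the total number of traversals, $2v(D)$, giving $\sum_C i(C)\le v(D)$. Second, you defer the termination and confluence of the reduction process (that the zig-zag normal form on each component is well defined), which must be settled before ``consecutive surviving cusps'' even makes sense. Neither of these is fatal on its own, but together with the unproved localization lemma the proposal is an outline of the \cite{DK} proof with its central step asserted rather than established.
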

Let $K$ be a knotoid in $S^2$. The oriented state components of the virtual closure of $K$, $\overline{v}(K)$ are the same with the oriented state components of $K$ when the long state components are closed in the virtual fashion. Therefore the $\Lambda_i$-variables assigned to long state components with surviving cusps of a knotoid transform to $K_i$-variables assigned to the circular components with surviving cusps in the arrow polynomial of the virtual knot which is the virtual closure of the knotoid. Using this idea, we show that the $\Lambda$-degree of the arrow polynomial can be used as a lower bound for the height of knotoids in $S^2$. 
\begin{thm}
The height of a knotoid $K$ in $S^2$ is greater than or equal to the $\Lambda$-degree of its arrow polynomial.
\end{thm}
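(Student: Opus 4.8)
The plan is to deduce this bound from the analogous statement for the \emph{virtual crossing number} of the virtual closure, namely the preceding theorem of \cite{DK} relating $\mathrm{vcn}$ of a virtual knot/link to the maximal $K$-degree of its arrow polynomial. Recall from the discussion just above that the oriented state expansion of $\overline{v}(K)$ is obtained from that of $K$ by connecting the two endpoints of each long state component in the virtual fashion, so that the normalized arrow polynomial $A[\overline{v}(K)]$ is obtained from $A[K]$ by the substitution $\Lambda_i\mapsto K_i$ for every $i$. First I would establish the exact degree comparison: for a classical knotoid diagram every circular state component is free of surviving cusps (this is the content of Theorem 5.2 and the remark deduced from it, that every summand of the arrow polynomial of a classical knotoid has $K$-degree zero). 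Hence each summand of $A[K]$ has the form $A^m\Lambda_i$ with \emph{no} $K_j$-factors, the $\Lambda$-degree of $A[K]$ is the largest $i$ occurring with nonzero coefficient, and under $\Lambda_i\mapsto K_i$ no summands merge or cancel, since distinct $\Lambda_i$ go to distinct $K_i$. Therefore the $K$-degree of $A[\overline{v}(K)]$ \emph{equals} the $\Lambda$-degree of $A[K]$, and the preceding theorem gives
\[
\mathrm{vcn}\bigl(\overline{v}(K)\bigr)\ \geq\ \bigl(K\text{-degree of }A[\overline{v}(K)]\bigr)\ =\ \bigl(\Lambda\text{-degree of }A[K]\bigr).
\]

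Next I would show $h(K)\geq\mathrm{vcn}(\overline{v}(K))$. Let $\tilde{K}$ be a diagram representing $K$ with $h(\tilde{K})=h(K)$, and let $s$ be a shortcut of $\tilde{K}$ meeting $\tilde{K}$ in exactly $h(K)$ points. Using $s$ itself as the connecting arc for the virtual closure — but declaring each intersection point with $\tilde{K}$ to be a \emph{virtual} crossing — produces a diagram of $\overline{v}(\tilde{K})=\overline{v}(K)$ with exactly $h(K)$ virtual crossings; since all connecting arcs are isotopic in $S^2$ and the isotopy induces only detour moves, this is a legitimate diagram of $\overline{v}(K)$. Hence $\mathrm{vcn}(\overline{v}(K))\leq h(K)$, and combining with the displayed inequality yields $h(K)\geq\Lambda\text{-degree of }A[K]$, which is the claim. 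As an alternative, more self-contained route one could imitate the proof of Theorem 4.12 directly: inside a state whose long component carries $2i$ irreducible cusps, analyze how that long component is forced to wind relative to the endpoints and use the Jordan curve theorem to conclude that the shortcut must cross at least $i$ of the resulting enclosing circles; this is essentially the knotoid-level unpacking of the $\mathrm{vcn}$ bound, and I would only fall back on it if a referee wanted the argument not to quote \cite{DK}.

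I expect the main obstacle to be the first step, namely justifying that the passage from $\Lambda$-degree to $K$-degree is an \emph{equality} rather than merely an inequality; this rests on the genuinely classical input (Theorem 5.2 and its consequence) that circular state components of a classical knotoid never retain cusps, so that the $\Lambda$- and $K$-degrees of $A[K]$ and $A[\overline{v}(K)]$ see exactly the same monomials. If that input were weakened, one would only obtain that $h(K)$ bounds some possibly smaller quantity. By contrast, the second step is routine bookkeeping about the underpass closure versus the virtual closure of a fixed diagram, and the third step is a direct appeal to the preceding theorem.
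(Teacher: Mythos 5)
Your argument is essentially the paper's proof: both establish the chain ($\Lambda$-degree of $A[K]$) $\leq$ ($K$-degree of $A[\overline{v}(K)]$) $\leq$ (virtual crossing number of $\overline{v}(K)$) $\leq h(K)$, using the fact that virtually closing a knotoid diagram along a minimal shortcut produces a diagram of $\overline{v}(K)$ with exactly $h(\tilde{K})$ virtual crossings, together with the Dye--Kauffman bound. The only cosmetic differences are that you make explicit the no-cancellation argument (via Theorem 5.2, circular components of a classical knotoid carry no cusps) where the paper just cites ``the discussion above,'' and you work with a height-minimizing diagram rather than appealing at the end to the invariance of the $\Lambda$-degree under $\Omega$-moves; both are sound.
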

\begin{proof}
By Theorem 5.3 and the discussion above we have the following inequality,
\begin{center}
The $\Lambda$-degree of $A[K]$ $ \leq$ The virtual crossing number of the knot $\overline{v}(K)$.
\end{center}
It is clear that the least number of virtual crossings obtained by closing a classical knotoid diagram virtually, is equal to the height of that diagram. Let $\tilde{K}$ be a classical knotoid diagram representing $K$. Then, $h(\tilde{K})$ is equal to the number of virtual crossings of $\overline{v}(\tilde{K})$, where $h(\tilde{K})$ denotes the height of the knotoid diagram $\tilde{K}$. So, the virtual crossing number of the virtual knot $\overline{v}(K)$ is less than or equal to $h(\tilde{K})$. By this and the first inequality, we have the following.
\begin{center}
The $\Lambda$-degree of $A[K]$ $\leq$ $h(\tilde{K})$.
\end{center}
The inequality above holds for any classical knotoid diagram equivalent to $K$ since the $\Lambda$-degree of the polynomial is invariant under the $\Omega$-moves. Therefore we have,
\begin{center}
The $\Lambda$-degree of $A[K]\leq h(K)$,
\end{center}
where $h(K)$ denotes the height of the knotoid $K$.
\end{proof}
Thus we have two tools; the affine index polynomial and the arrow polynomial for the estimation of the height of a knotoid in $S^2$. There are cases that both of the polynomials give the same estimation for the height and there are cases in which one of the polynomials give a more accurate estimation. We show some examples for each of these cases.
\begin{example}\normalfont
It can be verified by the reader that the affine index polynomial of the knotoid $K$ which is overlying the flat knotoid diagram given in Figure \ref{fig:spi} with the crossings $B$, $C$ and $D$ are negative and the rest of the crossings are positive, is trivial . The arrow polynomial of the knotoid $A[K]$, $A[K]=1+(-A^{-3}+A^{-2}+A^2+A^6)\Lambda_1+(-2A^{-4}-2A^4+4)\Lambda_2+(-A^{-6}+A^{-2}+A^2+A^6)\Lambda_3$. The $\Lambda$- degree of the arrow polynomial is $3$ so by Theorem 5.4, the height of the knotoid $K$ is at least $3$. It is not difficult to see that the height of the given diagram is also $3$. Thus the height of the knotoid $K$ is $3$. 
\end{example}
\begin{example}\normalfont
Figure \ref{fig:fve} shows the knotoid 5.7 \cite{Ba} and the corresponding weight chart. It can be verified easily that the odd writhe and the arrow polynomial of knotoid \textbf{5.7} are trivial. The arrow polynomial of the knotoid, $A[K_{5.7}]=(-A^{-3}+A-2A^5+A^9)+(A^{-9}-2A^{-5}+2A^{-1}-2A^3+A^7)\Lambda_1$. Thus it is a non-trivial arrow polynomial with $\Lambda$-degree $1$. This tells us that the height of the knotoid 5.7 is at least $1$. Since the knotoid 5.7 can be represented by a diagram with height $1$, we conclude that the height of the knotoid 5.7 is $1$. 
\end{example}
\begin{figure}[H]
		\centering  \scalebox{.35}{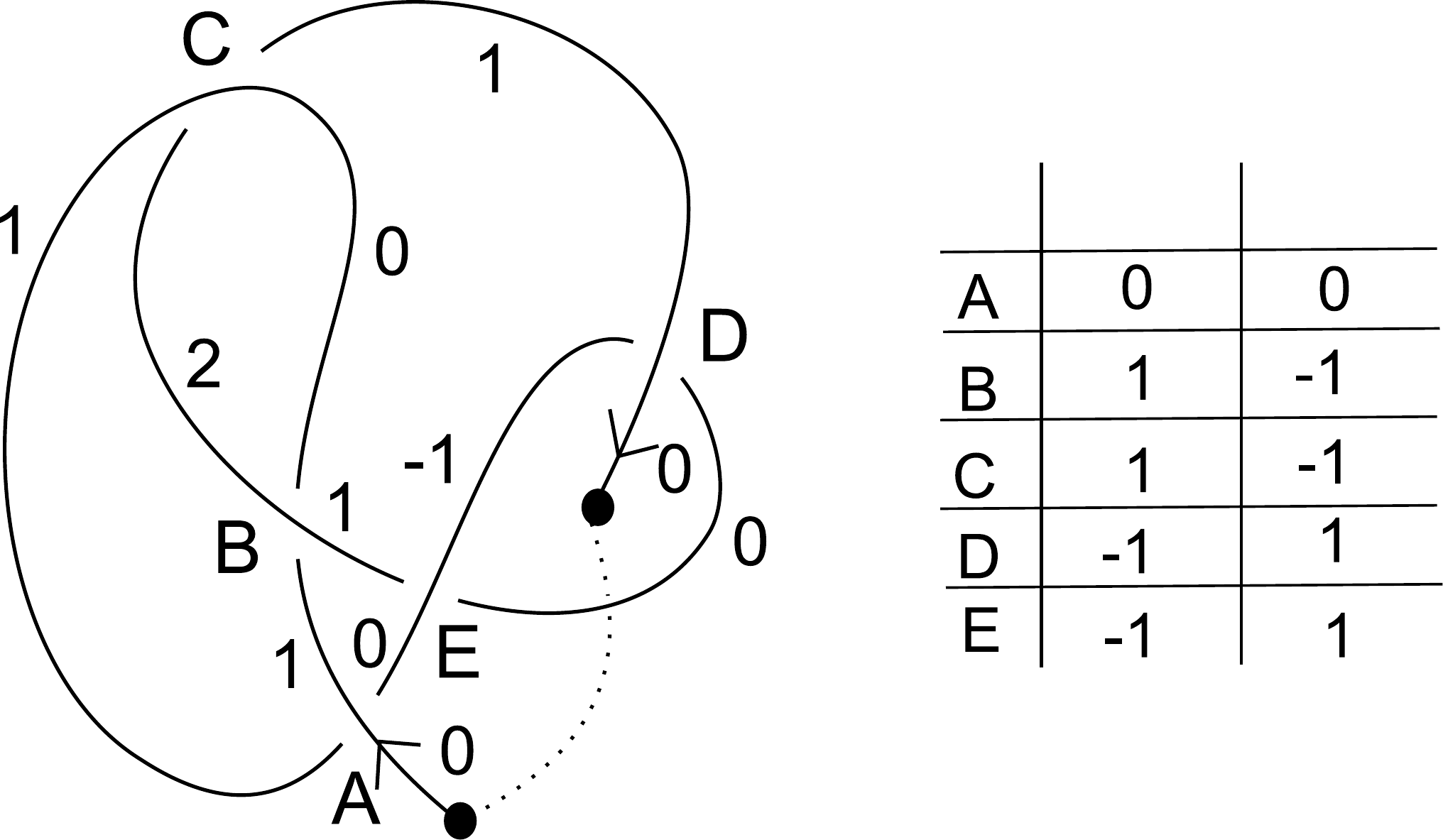}
      \caption{The weight chart of knotoid 5.7}
\label{fig:fve}	
 \end{figure}
		\begin{example}\normalfont
The arrow polynomial of the knotoid $K$, represented by the diagram in Figure \ref{fig:knotoid}(g) is 
$A[K]=A^6-(A^{-4}-A^4)\Lambda_1-(A^{-2}-A^2)\Lambda_2$. Thus the $\Lambda$-degree of the arrow polynomial of the knotoid $K$ is $2$. The affine index polynomial of $K$, $P_K(t)=t^2+2t+2t^{-1}+t^{-2}-6$, as can be computed easily by the weight chart given in Figure 33(a) showing the weights of crossings of the knotoid diagram. Thus, the maximal degree of the affine index polynomial of $K$ is also equal to $2$. So both the affine index polynomial and the arrow polynomial give the same lower bound for the height. Since $K$ can be represented by the given diagram with height $2$, we conclude that the height of $K$ is $2$. 
\end{example}
\begin{example}\normalfont
The reader can easily see that the height of the knotoid diagram $K$ given in Figure \ref{fig:knotoid}(f) is equal to $2$. We want to find out if there exists an equivalent knotoid diagram to $K$ with less height. The affine index polynomial of $K$ is $P_K(t)=2t+2t^{-1}-4$, as can be verified by Figure \ref{fig:lastxx}. The arrow polynomial of $K$ is $A[K]=(-A^{-5}+2A^{-1}-A^3-A^7) + 2(A-A^5)\Lambda_1$. The affine index polynomial and the arrow polynomial both assure that the height of $K$ is at least $1$. Therefore we have, $1 \leq h(K) \leq 2$. This is a case where our tools discussed in this paper can not give an exact estimation for the height.
\end{example}
\begin{figure}[H]
		\centering  \scalebox{.45}{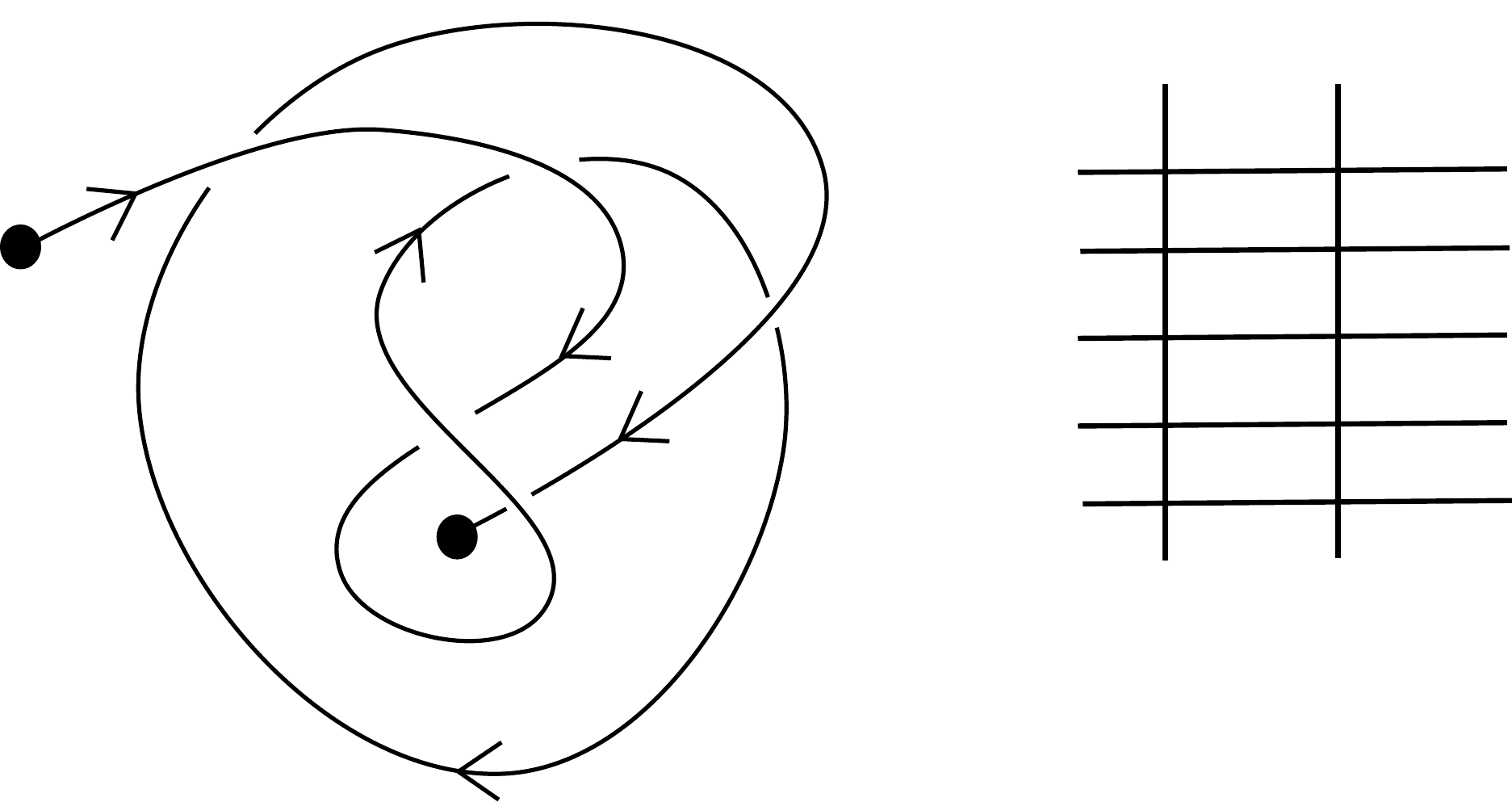}
      \caption{The weight chart of $K$}
\label{fig:lastxx}	
 \end{figure}
\section{Discussion}
 We end the paper with a full list of the questions that are discussed throughout the paper and possible future directions for the study of knotoids. 
\begin{enumerate}
\item  Determination of the kernel of the virtual closure map: We have nontrivial virtual knotoids closing virtually to the trivial knot. However nontrivial knot-type knotoids close to nontrivial knots. 
\textit{Is there a proper knotoid (a classical knotoid with nonzero height) whose virtual closure is the trivial knot?}\\
\item \normalfont Determination of the image of the virtual closure map: We show that the virtual closure map is not a surjective map. The proof will appear in \cite{GK1}. Here we ask the following question. \textit{How to determine if a given virtual knot is in the image of $\overline{v}$?} \\
\item\normalfont A generalization of the first question: \textit{Is there a proper knotoid whose virtual closure is a classical knot}
or \textit{do proper knotoids always close (virtually) to a virtual knot of genus 1?}\\
\item\normalfont Conjecture: \textit{The Jones polynomial for knotoids in $S^2$ detects the triviality of classical knotoids.}
Let $\overline{K}$ be a virtual knot with trivial Jones polynomial. If the conjecture holds, we will be able to conclude that the virtual closure of any proper knotoid is nontrivial, by using the equality $V(K)=V(\overline{v}(K))$, where $V$ denotes the Jones polynomial. 
\\
\item We want to know more about the height of knotoids and its relations with both the affine index polynomial and the arrow polynomial.
We have given examples where the estimation of the arrow polynomial is more powerful than the affine index polynomial in detecting the height of a given classical knotoid. Does there exist an example for which the index polynomial is superior to the arrow polynomial in height determination? \\
\item Khovanov homology can be extended to an invariant of knotoids. There is a direct analog of Khovanov homology for classical knotoids. The analogs of Khovanov homology for virtual knots \cite{DKK, Ma3} can be applied to virtual knotoids. It is worth investigating Khovanov homology for knotoids. We can ask the following question: \textit{Does Khovanov homology for knotoids detect the trivial knotoid?} Note that Khovanov homology detects the unknot \cite{KM}.\\
\item Let $C$ be an open oriented curve in $3$-dimensional space. The set of knotoids associated to $C$ that are obtained by projecting the curve to planes deserves investigation since the physical properties of the curve can be studied in this way.
\end{enumerate}
\section*{\ackname}
 This research  has been co-financed by the European Union (European Social Fund - ESF) and Greek national funds through the Operational Program "Education and Lifelong Learning" of the National Strategic Reference Framework (NSRF) - Research Funding Program: THALES: Reinforcement of the interdisciplinary and/or inter-institutional research and innovation.

 The first author is grateful to her advisor Prof. Sofia Lambropoulou for suggesting the subject of knotoids for her PhD study.

\end{document}